\documentclass[12pt,leqno]{amsart}%
\usepackage{amsmath}
\usepackage{amsfonts}
\usepackage{amssymb}
\usepackage{graphicx}%
\setlength{\textheight}{8.3in}
\setlength{\textwidth}{5.35in}
\setcounter{MaxMatrixCols}{30}
\providecommand{\U}[1]{\protect\rule{.1in}{.1in}}
\newtheorem{theorem}{Theorem}

\newtheorem{corollary}[theorem]{Corollary}

\newtheorem{definition}[theorem]{Definition}
\newtheorem{example}[theorem]{Example}

\newtheorem{lemma}[theorem]{Lemma}

\newtheorem{proposition}[theorem]{Proposition}
\newtheorem{remark}[theorem]{Remark}

\begin{document}

\title[Spectral and stochastic properties of the $f$-Laplacian]{Spectral and stochastic
properties of the $f$-Laplacian, solutions of PDE's at
infinity\\ and geometric applications}
\author{G. Pacelli  Bessa}
\address{Departamento de Matem\'atica \\Universidade Federal do Cear\'a-UFC\\
60455-760 Fortaleza, CE, Brazil}
\email{bessa@mat.ufc.br}
\author{Stefano Pigola}
\address{Sezione di Matematica - DiSAT\\
Universit\`a dell'Insubria - Como\\
via Valleggio 11\\
I-22100 Como, ITALY}
\email{stefano.pigola{@}uninsubria.it}
\author{Alberto G. Setti}
\address{Sezione di Matematica - DiSAT\\
Universit\`a dell'Insubria - Como\\
via Valleggio 11\\
I-22100 Como, ITALY}
\email{alberto.setti@uninsubria.it}
\begin{abstract}
The aim of this paper is to suggest a  new viewpoint to study qualitative properties
of solutions of semilinear elliptic PDE's defined outside a compact set. The relevant tools
come from spectral theory and from a combination of stochastic properties of the relevant
differential operators. Possible links between spectral and stochastic properties are
analyzed in detail.
\end{abstract}
\subjclass[2010]{58J05, 58J50}
\keywords{Weighted Laplacians, Feller property, stochastic completeness, essential spectrum, gradient Ricci solitons}
\maketitle
\tableofcontents

\section*{Introduction}

In this paper we suggest a new perspective to study qualitative properties of solutions
of semilinear elliptic PDE's, especially when these are defined only outside a compact set.
In order to enlarge the range of applicability of the techniques, we decide to place our
treatment in the setting of weighted Riemannian manifolds and corresponding drifted
Laplacians.

 The germ of the present investigation is contained in the very recent paper
\cite{ps-feller} which is devoted to a systematic treatment of the Feller property
of a Riemannian manifold. In fact, using a suitable comparison theory, we shall show
how (weighted) manifolds which are both stochastically complete  and Feller represent
a natural framework where solutions of PDE's at infinity can be studied. The fact that
transience and stochastic completeness of the underlying manifolds have PDE's counterpart
is well understood. However, due to the nature of these stochastic properties, so far
only global solutions have been considered. The introduction of the Feller property
in combination with the stochastic completeness, will enable us to get important
information even in the case of solutions at infinity.

Manifolds which are both
stochastically complete and Feller belong to a pretty wide class containing complete
Ricci solitons, complete manifolds with controlled Ricci tensor and Cartan-Hadamard
manifolds with at most quadratic exponential volume growth. The usefulness of the
technique is visible in the geometric applications which include submanifold theory
and the Yamabe problem. It is well known that the spectral theory of diffusion operators
is affected by stochastic properties of the corresponding diffusion process. For instance,
using capacitary arguments, it is readily seen that recurrence forces  the bottom of the
spectrum of the diffusion operator to be zero. Recent works \cite{BeJoMo, Ha-JGA} have
emphasized an intriguing link between the stochastic completeness of certain specific
manifolds and the essential spectrum of the operator.  We shall analyze in more details
possible relations between stochastic and spectral properties of weighted manifolds.
On the route we will prove a generalized and abstract version of the discreteness of
the spectrum of bounded minimal submanifolds recently obtained in \cite{BeJoMo}. The
nature of the essential spectrum also suggests that it could fit in very well in the
main topic of the paper. Indeed, by the decomposition principle, the bottom of the
essential spectrum is sensitive only on the geometry at infinity of the underlying
manifold and Barta's classical characterization leads naturally to solutions at
infinity of PDE's.
\par
Part of the present work was presented by the third named author at the workshop "Ricci solitons days" held in Pisa, April 4-8, 2011.

\section{Notation}

Throughout this note, we shall always use the symbol $M_{f}$ to denote the
$m$-dimensional weighted manifold%
\[
M_{f}=\left(  M,\left\langle ,\right\rangle ,d\mathrm{vol}_{f}\right)  ,
\]
where $\left(  M,\left\langle ,\right\rangle \right)  $ is a Riemannian
manifold, $f:M\rightarrow\mathbb{R}$ is a selected smooth function on $M$,
$d\mathrm{vol}$ denotes the Riemannian measure of $\left(  M,\left\langle
,\right\rangle \right)  $ and, finally, $d\mathrm{vol}_{f}=e^{-f}%
d\mathrm{vol}$ is the weighted measure. The $f$-Laplacian associated to the
weighted manifold $M_{f}$ is the operator%
\[
\Delta_{f}u=\operatorname{div}_{f}\left(  \nabla u\right)  :=e^{f}%
\operatorname{div}\left(  e^{-f}\nabla u\right)  ,
\]
which is symmetric on $L^{2}\left(  M,d\mathrm{vol}_{f}\right)  $.

\vspace{2mm}
The Bakry-Emery Ricci curvature of the weighted manifold $M_{f}$ is the
$2$-tensor%
\[
Ric_{f}=Ric+\mathrm{Hess}\left(  f\right)  .
\]
In case%
\[
Ric_{f}=\lambda\left\langle ,\right\rangle ,
\]
for some constant $\lambda\in\mathbb{R}$, then the weighted manifold $M_{f}$
is called a Ricci soliton. The Ricci soliton $M_{f}$ is said to be shrinking,
steady or expanding according to the fact that $\lambda>0$, $\lambda=0$ or
$\lambda<0$, respectively.

\section{Stochastic completeness and maximum principle}

We say that the weak maximum principle at infinity holds on a weighted manifold
$M_{f}$ if for every $u\in C^{2}\left(  M\right)  $, with $\sup_{M}u=u^{\ast
}<+\infty$, there exists a sequence $\left\{  x_{k}\right\}  $ along which%
\[
\text{(i) }u\left(  x_{k}\right)  >u^{\ast}-\frac{1}{k},\,\,\,\,\text{ (ii) }%
\Delta_{f}u\left(  x_{k}\right)  <\frac{1}{k}.
\]
It is known, \cite{PRS-Memoirs}, \cite{PRimS-MathZ}, that this principle is
equivalent to the stochastic completeness of the diffusion process associated
to $\Delta_{f}$. This means that the heat kernel of the $f$-Laplacian
$p_{f}\left(  x,y,t\right)  $ satisfies the conservation property%
\[
\int_{M}p_{f}\left(  x,y,t\right)  d\mathrm{vol}_{f}\left(  y\right)
=1\text{,}%
\]
for every $x\in M$ and $t>0$. The $f$-stochastic completeness, in turn, is
implied by the volume condition%
\[
\frac{R}{\log\mathrm{vol}_{f}\left(  B_{R}\right)  }\notin L^{1}\left(
+\infty\right)  .
\]
Accordingly, since for a complete weighted manifold $M_{f}$ satisfying
$Ric_{f}\geq\lambda$ we have, for constants $A$, $B$, $C$, the Qian-Wei-Wylie volume estimate
\cite{Qian-AnnProb}, \cite{WW},%
\begin{equation}
\label{qian_volestimate}\mathrm{vol}_{f}\left(  B_{R}\right)  \leq
A+B\int_{R_{0}}^{R}e^{-\lambda t^{2}+C t}dt, \quad R>>1.
\end{equation}
It follows then that every complete gradient Ricci soliton is $f$-stochastically
complete, therefore, enjoys the weak maximum principle for the $f$-Laplacian.

In fact, using estimates for the potential function due to Z.-H. Zhang,
\cite{Zhang-PAMS}, that will be described in Section \ref{subsection_Feller&Solitons}
 below,  and a general
result contained in \cite{prrs-almostsolitons}, one proves that on every gradient
Ricci soliton, the full Omori-Yau maximum principle holds, both for the $f$-Laplacian and
for the
ordinary Laplacian. Namely,

\begin{theorem}
\label{th_fullomoriyau} Let $(M, \langle\,,\rangle)$ be any gradient Ricci soliton.
Then for every function $u\in C^2(M)$ bounded above, there exists a
sequence $\{x_k\}$  such that  $u(x_n)\to u^*$, $|\nabla u(x_k)|< 1/k$
and $\Delta_f u(x_k)<\displaystyle 1/k$ $($resp. $\Delta u(x_k)<\displaystyle 1/k$$)$.
\end{theorem}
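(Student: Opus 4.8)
The plan is to invoke the Omori--Yau-type criterion from \cite{prrs-almostsolitons}, whose hypotheses are a growth control on a suitable exhaustion function together with a lower bound on $Ric_f$ (resp.\ on $Ric$), and to verify these hypotheses on an arbitrary gradient Ricci soliton using Zhang's estimates for the potential function $f$. First I would recall, from Section~\ref{subsection_Feller&Solitons}, that on a gradient Ricci soliton with $Ric_f=\lambda\langle,\rangle$ the potential $f$ satisfies a sharp two-sided bound of the form $c_1 r(x)^2 - c_2 \le f(x) \le c_3 r(x)^2 + c_4$ (with the quadratic term present when $\lambda\neq 0$, and $f$ bounded in the steady compact case / with the appropriate linear-quadratic control in general), where $r$ is the Riemannian distance from a fixed point; this is precisely Z.-H. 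Zhang's result \cite{Zhang-PAMS}. Since $Ric_f=\lambda\langle,\rangle$ is in particular bounded below by the constant $\lambda$, the $f$-Laplacian version of the maximum principle will follow immediately once the general criterion is shown to apply; the subtle point is passing to the \emph{ordinary} Laplacian.

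For the ordinary Laplacian the key identity is $\Delta u = \Delta_f u + \langle \nabla f,\nabla u\rangle$, so the extra term $\langle\nabla f,\nabla u\rangle$ must be absorbed. The natural device is to use the soliton structure equation: tracing $Ric+\mathrm{Hess}(f)=\lambda\langle,\rangle$ gives $S+\Delta f = m\lambda$, and the classical soliton identities (e.g.\ $S+|\nabla f|^2 - 2\lambda f = \mathrm{const}$) together with Zhang's bounds yield $|\nabla f|(x) \le C(1+r(x))$, i.e.\ $|\nabla f|$ grows at most linearly. Then I would take as exhaustion/comparison function something like $\gamma = r^2$ (or a smoothing thereof), for which $|\nabla \gamma|\le C r$ and $\Delta_f \gamma \le C(1+ f + r^2)\le C'(1+r^2)= C'(1+\gamma)$ by the Bochner/Laplacian comparison adapted to the $Ric_f\ge\lambda$ bound (the Wei--Wylie comparison, already cited in the excerpt via \eqref{qian_volestimate}); hence also $\Delta\gamma = \Delta_f\gamma + \langle\nabla f,\nabla\gamma\rangle \le C'(1+\gamma) + Cr\cdot Cr \le C''(1+\gamma)$. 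This quadratic control of $\Delta\gamma$ by $\gamma$, with $|\nabla\gamma| = o(\text{something admissible})$ relative to $\gamma$, is exactly the input the general result of \cite{prrs-almostsolitons} requires in order to conclude the full Omori--Yau maximum principle for $\Delta$; and running the same argument with $\gamma$ and $\Delta_f$ gives the statement for $\Delta_f$.

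I would therefore organize the write-up as: (1) quote Zhang's potential estimates and derive the linear bound on $|\nabla f|$ from the soliton identities; (2) construct $\gamma$ and check the two differential inequalities $|\nabla\gamma|\le G(\gamma^{1/2})$ and $\Delta_{\bullet}\gamma \le G(\gamma^{1/2})$ (with $\bullet \in\{\,\cdot\,,f\}$) for an admissible $G$, using Wei--Wylie comparison for the $\Delta_f$ case and then adding the gradient term for the $\Delta$ case; (3) invoke \cite{prrs-almostsolitons} to obtain the maximizing sequence. The main obstacle is step (2) in the unweighted case: one must ensure that after adding $\langle\nabla f,\nabla\gamma\rangle$ the inequality $\Delta\gamma\le G(\sqrt\gamma)$ still holds with $G$ in the admissible class — this is where the \emph{quadratic} (not faster) growth of $f$ and the \emph{linear} growth of $|\nabla f|$ are both essential, and where one has to be careful that the shrinking case ($\lambda>0$, where $f\sim \lambda r^2/4$ genuinely grows quadratically) does not violate the structural conditions. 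Once the growth bookkeeping is done the conclusion is formal.
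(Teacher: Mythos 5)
Your proposal follows essentially the same route as the paper: the authors prove Theorem~\ref{th_fullomoriyau} by applying the general Omori--Yau criterion (Theorem~\ref{generalymoriyau}, their adaptation of the result in \cite{prrs-almostsolitons} via Borb\'ely's argument) with the choice $\gamma(x)=r(x)^2$, combining Zhang's linear bound on $|\nabla f|$ with Qian's comparison estimates for $\Delta_f r$ and $\Delta r$, exactly as you outline. Your growth bookkeeping ($\Delta_\bullet\gamma\le C(1+\gamma)$, with $G(t)=1+t$ admissible since $1/G\notin L^1(+\infty)$) is correct and matches the paper's intended verification.
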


We point out that the case of shrinking solitons was recently obtained by
M. Fern\'{a}ndez-L\'{o}pez and E. Garc\'{\i}a-R\'{\i}o, \cite{FLGR-OmoriYau}.

\section{The Feller property} \label{section_Feller}

It is known, see \cite{PRS-Memoirs}, that the weak maximum principle at infinity
for the $f$-Laplacian is a powerful tool to deduce qualitative information on
the solutions of differential inequalities of the form%
\begin{equation}
\Delta_{f}u\geq\Lambda\left(  u\right)  . \label{ineq}%
\end{equation}
Accordingly, every bounded above solution $u$ of (\ref{ineq}), on the whole
manifold $M$, satisfies%
\[
\Lambda\left(  u^{\ast}\right)  \leq0.
\]
This fact has many applications in geometric analysis. Our aim is now to
investigate qualitative properties of solutions of (\ref{ineq}) which are
defined only in a neighborhood of infinity. This requires the introduction of
new tools that can be developed under the validity of a further stochastic
property of the underlying manifold, namely, the Feller property.

Note that if $M$ is stochastically complete for $\Delta_{f}$, then a bounded
solution $u>0$ of the differential inequality%
\[
\Delta_{f}u\geq\lambda u
\]
outside a smooth domain $\Omega\subset\subset M$ satisfies%
\[
u\left(  x\right)  \leq c\cdot h\left(  x\right)  \text{, on }M\backslash\Omega,
\]
where $c>0$ is a suitable constant and $h>0$ is the minimal solution of the
problem%
\[
\left\{
\begin{array}
[c]{rlll}%
\Delta_{f}h&=&\lambda \cdot h & \text{in }M\backslash\overline{\Omega}\\
h&=&1 & \text{on }\partial\Omega,
\end{array}
\right.
\]
(which is constructed by means of an exhaustion procedure).

Indeed, let $c=\sup_{\partial\Omega} u$. Then, for every $\epsilon>0$,
$$\Delta_{f} (u-c\cdot h -\epsilon)\geq\lambda(u-c\cdot h) \geq\lambda(u-c\cdot h -\epsilon)\,\,{\rm  on}\,\,
M\setminus\overline{\Omega}$$ and $u-c\cdot h - \epsilon\leq-\epsilon$ on
$\partial\Omega$ . Therefore the function $v_{\epsilon}=\max\{0,
u-c\cdot h-\epsilon\}$ is bounded, non-negative and satisfies $\Delta_{f}
v_{\epsilon}\geq\lambda v_{\epsilon}$. Since $M$ is stochastically complete
with respect to $\Delta_{f}$, $v_{\epsilon}\equiv0$, that is, $u\leq ch
+\epsilon$, and the conclusion follows letting $\epsilon\to0.$

In particular, if $h\left(  x\right)  \rightarrow0$ as $x\rightarrow\infty$,
we can deduce that the same holds for the original function $u$.

 According to
a  characterization by R. Azencott \cite{Az-bsmf}, it happens that the required
decay property of $h$ is equivalent to  the Feller property on
$M$ with respect to $\Delta_{f}$, that is, that the heat semigroup generated by $-\Delta_{f}$
maps the space  $C_{o}(M)$ of continuous functions vanishing at infinity into itself, or equivalently,
that for every relatively
compact open set $\Omega$ in $M$, the heat kernel $p_f$ of $-\Delta_f$
satisfies
\[
\int_{\Omega} p_f(x,y,t) d\mathrm{vol}_{f}(y)\to0 \text{ as } x\to+\infty.
\]

We thus obtain the following result.
%
%
%
\begin{theorem}\label{thm_stoc+feller}
Let $M_{f}$ be \thinspace$f$-stochastically complete. If $M_{f}$ is Feller,
then every bounded solution $v>0$ of%
\[
\Delta_{f}v\geq\lambda v\text{, on }M\backslash\overline{\Omega}
\]
satisfies%
\[
v\left(  x\right)  \rightarrow0\text{, as }x\rightarrow\infty.
\]
\end{theorem}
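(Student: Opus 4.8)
The plan is to carry out precisely the programme anticipated in the paragraphs preceding the statement: $f$-stochastic completeness yields an a priori comparison of $v$ with the minimal solution $h$ of a fixed exterior boundary value problem, and the Feller property, through Azencott's characterization, forces $h\to 0$ at infinity. Throughout I take $\lambda>0$, which is the range in which the statement has content (for $\lambda\le 0$ the constant function $v\equiv 1$ on $\mathbb{R}^m$ is a bounded positive solution that does not decay). After enlarging $\Omega$, if necessary, to a smooth relatively compact open set containing it — this only replaces the given neighbourhood of infinity by a smaller one — I may assume $\partial\Omega$ is smooth. Set $c:=\sup_{M\setminus\overline\Omega}v<+\infty$, and let $h>0$ be the minimal positive solution of $\Delta_f h=\lambda h$ in $M\setminus\overline\Omega$, $h=1$ on $\partial\Omega$, obtained by exhausting $M\setminus\overline\Omega$ by smooth relatively compact domains and solving the associated Dirichlet problems; since $\lambda\ge 0$, the maximum principle for the operator $\Delta_f-\lambda$ gives $0<h\le 1$ and continuity up to $\partial\Omega$.

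Fix $\epsilon>0$ and consider $v_\epsilon:=\max\{0,\,v-c\,h-\epsilon\}$, extended by $0$ across $\overline\Omega$; this is continuous on all of $M$, since $v-c\,h\le 0$ near $\partial\Omega$ (there $h\to 1$ while $v\le c$) forces $v_\epsilon\equiv 0$ in a one-sided neighbourhood of $\partial\Omega$. Using $\Delta_f h=\lambda h$ and $\lambda\epsilon\ge 0$, on the open set $\{v-c\,h-\epsilon>0\}$ one has $\Delta_f(v-c\,h-\epsilon)=\Delta_f v-\lambda c\,h\ge\lambda v-\lambda c\,h=\lambda(v-c\,h-\epsilon)+\lambda\epsilon\ge\lambda(v-c\,h-\epsilon)$, so $v_\epsilon$ is a bounded, nonnegative (weak) subsolution of $\Delta_f w\ge\lambda w$ on $M$. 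Now invoke the weak maximum principle at infinity for $\Delta_f$ — valid by $f$-stochastic completeness, and applicable to the merely Lipschitz function $v_\epsilon$ via the usual smoothing argument or the equivalent open formulations in \cite{PRS-Memoirs}: along a sequence $\{x_k\}$ with $v_\epsilon(x_k)\to\sup_M v_\epsilon$ and $\Delta_f v_\epsilon(x_k)<1/k$, the subsolution inequality gives $1/k>\lambda v_\epsilon(x_k)$, hence $\lambda\sup_M v_\epsilon\le 0$ and therefore $v_\epsilon\equiv 0$ (here $\lambda>0$ enters). Thus $v\le c\,h+\epsilon$ on $M\setminus\overline\Omega$, and letting $\epsilon\downarrow 0$ we get $v\le c\,h$ there.

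It now suffices to prove $h(x)\to 0$ as $x\to\infty$: then $0<v(x)\le c\,h(x)\to 0$, which is the assertion (the initial reduction to smooth $\Omega$ being harmless, as $M\setminus\overline\Omega$ is a neighbourhood of infinity). This is the point at which the Feller property is used. By the characterization of R.~Azencott \cite{Az-bsmf} (see also \cite{ps-feller}), the Feller property of $M_f$ for $\Delta_f$ is equivalent to the condition that, for every relatively compact open set $\Omega$, $\int_\Omega p_f(x,y,t)\,d\mathrm{vol}_f(y)\to 0$ as $x\to\infty$, equivalently that the minimal positive solution of the exterior problem for $\Delta_f-\lambda$ decays to $0$ at infinity. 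Concretely, the Feynman--Kac formula together with $f$-stochastic completeness gives $h(x)=\mathbb{E}_x[\,e^{-\lambda\sigma};\,\sigma<\infty\,]$, where $\sigma$ is the first hitting time of $\overline\Omega$ by the diffusion associated to $\Delta_f$; writing $\mathbb{E}_x[\,e^{-\lambda\sigma};\,\sigma<\infty\,]=\lambda\int_0^{\infty}e^{-\lambda t}\,\mathbb{P}_x(\sigma\le t)\,dt$ and using that the Feller property makes $\mathbb{P}_x(\sigma\le t)\to 0$ as $x\to\infty$ for each fixed $t$, dominated convergence yields $h(x)\to 0$. (Again $\lambda>0$ is essential: for $\lambda=0$ one would need $\mathbb{P}_x(\sigma<\infty)\to 0$, which the Feller property does not supply.)

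The only genuinely nontrivial ingredient is the equivalence between the Feller property and the decay of $h$ at infinity, i.e. the content of \cite{Az-bsmf} and \cite{ps-feller}; the rest is the comparison already spelled out in the discussion preceding the statement. A self-contained proof of that equivalence — relating the functional-analytic requirement that the heat semigroup preserve $C_o(M)$ to the escape of the $\Delta_f$-diffusion to infinity — is where the real work would lie, and I would regard it as the main obstacle. The only other point needing a little care is the application of the weak maximum principle at infinity to the Lipschitz function $v_\epsilon$, which is routine (approximation, or the open-form characterizations of $f$-stochastic completeness).
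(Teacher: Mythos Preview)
Your proof is correct and follows essentially the same approach as the paper: compare $v$ with $c\cdot h$ via the auxiliary function $v_\epsilon=\max\{0,\,v-c\,h-\epsilon\}$, use $f$-stochastic completeness (weak maximum principle at infinity) to force $v_\epsilon\equiv 0$, and then invoke Azencott's characterization of the Feller property to conclude $h\to 0$ at infinity. The only cosmetic difference is that the paper takes $c=\sup_{\partial\Omega}v$ rather than $\sup_{M\setminus\overline\Omega}v$, and it does not spell out the Feynman--Kac justification or the $\lambda>0$ caveat that you (correctly) add.
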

On the basis of these observations, we prove the following theorem.

\begin{theorem}
\label{th_f(u)}Let $M_{f}$ be a stochastically complete and Feller manifold
for $\Delta_{f}$. Consider the differential inequality%
\begin{equation}
\Delta_{f}u\geq\Lambda\left(  u\right)  ,\text{ on }M\backslash\Omega,
\label{Dirichlet}%
\end{equation}
where $\Omega\subset\subset M$ and $\Lambda:[0,+\infty)\rightarrow
\lbrack0,+\infty)$ is either continuous or it is non-decreasing function which
satisfies the following conditions:%
\[
\text{(a) }\Lambda\left(  0\right)  =0\text{; (b) }\Lambda\left(  t\right)
>0\text{, }\forall t>0\text{; (c) }\liminf_{t\rightarrow0+}\frac
{\Lambda\left(  t\right)  }{t^{\xi}}>0,
\]
for some $0\leq\xi\leq1$. Then, every bounded solution $u>0$ of
(\ref{Dirichlet}) satisfies%
\[
\lim_{x\rightarrow\infty}u\left(  x\right)  =0.
\]
\end{theorem}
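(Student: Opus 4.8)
The plan is to reduce the semilinear inequality (\ref{Dirichlet}) to a \emph{linear} inequality of the form $\Delta_{f}u\ge\lambda u$ on an exterior domain, and then invoke Theorem \ref{thm_stoc+feller} directly. The only real work is to manufacture a \emph{single} constant $\lambda>0$ for which $\Lambda(t)\ge\lambda t$ holds on the whole range of values attained by the bounded function $u$; once this is available the conclusion is immediate.

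First I would tidy up the exterior domain. Since $\Omega\subset\subset M$ I may fix a smooth, relatively compact open set $\Omega'$ with $\overline{\Omega}\subset\Omega'$, so that $u$ is a bounded positive solution of $\Delta_{f}u\ge\Lambda(u)$ on $M\setminus\overline{\Omega'}\subseteq M\setminus\Omega$; set $K:=\sup_{M\setminus\Omega}u$, which is finite since $u$ is bounded and positive because $u>0$.

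The analytic heart of the argument is an elementary observation about $\Lambda$. By condition (c) there are $c_{0}>0$ and $\delta_{0}\in(0,1]$ with $\Lambda(t)\ge c_{0}t^{\xi}$ for $0<t\le\delta_{0}$; since $0\le\xi\le1$ and $t\le1$ on this range, $t^{\xi}\ge t$, hence $\Lambda(t)\ge c_{0}t$ on $(0,\delta_{0}]$. On the compact interval $[\delta_{0},K]$ (nonempty only if $K>\delta_{0}$) the number $\mu_{0}:=\inf_{[\delta_{0},K]}\Lambda$ is strictly positive --- by compactness together with (b) if $\Lambda$ is continuous, and because $\mu_{0}=\Lambda(\delta_{0})>0$ if $\Lambda$ is non-decreasing --- whence $\Lambda(t)\ge\mu_{0}\ge(\mu_{0}/K)\,t$ for $\delta_{0}\le t\le K$. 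Taking $\lambda:=\min\{c_{0},\mu_{0}/K\}>0$ (or simply $\lambda:=c_{0}$ if $K\le\delta_{0}$) we obtain $\Lambda(t)\ge\lambda t$ for every $t\in(0,K]$, and therefore, since $0<u\le K$,
\[
\Delta_{f}u\ \ge\ \Lambda(u)\ \ge\ \lambda\, u\qquad\text{on }M\setminus\overline{\Omega'}.
\]

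Finally, $u>0$ is bounded and satisfies $\Delta_{f}u\ge\lambda u$ outside the relatively compact set $\Omega'$, while $M_{f}$ is $f$-stochastically complete and Feller; Theorem \ref{thm_stoc+feller} applies and yields $u(x)\to0$ as $x\to\infty$, which is the assertion of Theorem \ref{th_f(u)}. I do not anticipate a genuine obstacle: the substantive content --- comparison with the minimal solution $h$ of $(\Delta_{f}-\lambda)h=0$ with $h=1$ on $\partial\Omega'$, the equivalence of its decay at infinity with the Feller property (Azencott's characterization), and the maximum-principle/stochastic-completeness step squeezing $u$ below $c\,h$ --- is already packaged in Theorem \ref{thm_stoc+feller}. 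The one point to watch is that the ``continuous or non-decreasing'' dichotomy is used only to keep $\Lambda$ away from $0$ on $[\delta_{0},K]$, while the restriction $\xi\le1$ is exactly what turns the near-origin behaviour of $\Lambda$ into a bona fide \emph{linear} lower bound rather than a weaker power.
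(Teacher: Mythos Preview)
Your proposal is correct and follows essentially the same route as the paper: both arguments manufacture a constant $\lambda>0$ with $\Lambda(t)\ge\lambda t$ on the range of $u$ (using condition (c) near $0$ and the continuity/monotonicity dichotomy on the rest of the interval) and then invoke Theorem~\ref{thm_stoc+feller}. Your treatment is in fact slightly tidier, handling the continuous and non-decreasing cases in one stroke, whereas the paper writes out only the non-decreasing case and declares the continuous one easier.
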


\begin{proof}
Let us consider the case where $f$ is not continuous, the other case is
easier. By assumption, there exists $0<\varepsilon<1/2$ and $c>0$ such that%
\[
\Lambda\left(  t\right)  \geq c\, t^{\xi},\text{ on }(0,2\varepsilon).
\]
Since%
\[
t^{\xi}\geq t\text{, on }(0,1],
\]
and $\Lambda$ is non-decreasing, then%
\[
\Lambda\left(  u\left(  x\right)  \right)  \geq\Lambda_{\varepsilon}\left(
u\left(  x\right)  \right)  =\left\{
\begin{array}
[c]{cc}%
c\, u, & \text{if }u\left(  x\right)  <\varepsilon\\
c\, \varepsilon, & \text{if }u\left(  x\right)  \geq\varepsilon.
\end{array}
\right.
\]
On the other hand, since $u>0$ is bounded, if we set $u^{\ast}=\sup
_{M\backslash\Omega}u$, then%
\[
c\varepsilon\geq\frac{c\,\varepsilon}{u^{\ast}}u^{\ast}\geq\frac{c\,\varepsilon
}{u^{\ast}}u.
\]
It follows that%
\[
\Delta_{f}u\geq\Lambda_{\varepsilon}\left(  u\right)  \geq\lambda u,
\]
where%
\[
\lambda=c\min\left\{  1,\frac{\varepsilon}{u^{\ast}}\right\}  >0.
\]
Using the Feller property we now conclude that $u\left(  x\right)
\rightarrow0$, as $x\rightarrow\infty$.
\end{proof}


\section{Estimates for the f-Laplacian of the distance function and Comparison results}
Because of comparison arguments and radialization techniques,
many of the properties of solutions of differential in(equalities) involving
the  $f$-Laplacian, and in particular, the stochastic properties of $\Delta_f$
may be deduced imposing suitable bounds on $\Delta_f r$ where $r(x)$ denotes the
distance function from a reference point $o\in M$. We are going to collect some results along
these lines concerning stochastic completeness, the Feller property, the full Omori-Yau maximum
principle and the compact support principle. Items (i) and (ii) in the next theorem are weighted versions of Corollary 15.2 (c) and (d) in \cite{grigoryan-BAMS}.

\begin{theorem}
\label{th-comparison1}
Let $M_f$ be a weighted Riemannian manifold, and let $r(x)$ be the distance function
from a fixed point $o\in M$. Let also $g$ be a $C^2$ odd function on $\mathbb{R}$ satisfying
$g(0)=0$, $g'(0)=1$ and $g(t)>0$ for all $t>0.$
\newline
(i) Assume that there exist $R_0\geq 0$ and constant $n>1$ such that
for every $x$ within the cut locus of $o$  with $r(x)>R_0$ we have
\begin{equation}
\label{Delta-r-upperbound}
\Delta_f r(x) \leq (n-1) \frac{g'}{g}(r(x)) \, \text{ with } \,
 \frac{\int_0^r g(t)^{n-1} dt}{g(r)^{n-1}}\not\in L^1(+\infty),
\end{equation}
then $M_f$ is stochastically complete.
\vspace{1mm}
\newline (ii) Assume that $o$ is a pole and that there exist $R_0\geq 0$ and a constant $n>1$ such
that,
for $r(x)>R_0$, we have
\begin{equation}
\label{Delta-r-lowerbound}
\Delta_f r(x) \geq (n-1) \frac{g'}{g}(r(x)) \, \text{ with } \,
 \frac{\int_0^r g(t)^{n-1} dt}{g((r)^{n-1}}\in L^1(+\infty),
\end{equation}
then $M_f$ is not stochastically complete.
\vspace{1mm}
\newline
(iii) Assume that $o$ is a pole,  that there exist $R_0\geq 0$ and a constant $n>1$ such that for
$r(x)>R_0$ we have
\begin{equation}
\label{Delta-r-lowerbound1}
\Delta_f r(x) \geq (n-1) \frac{g'}{g}(r(x))
\end{equation}
and that
either%
\begin{equation}
\frac{1}{g^{n-1}\left(  r\right)  }\in L^{1}\left(  +\infty\right)
\label{model1}%
\end{equation}
or%
\begin{equation}
\text{(a)}\;\frac{1}{g^{n-1}\left(  r\right)  }\notin L^{1}\left(
+\infty\right)  \text{\qquad and\qquad(b) }\;\frac{\int_{r}^{+\infty}%
g^{n-1}\left(  t\right)  dt}{g^{n-1}\left(  r\right)  }\notin L^{1}\left(
+\infty\right), \label{model2}%
\end{equation}
then $M_f$ is Feller.
\vspace{1mm}
\newline
(iv) Assume there exist $R_0\geq 0$ and a constant $n>1$ such that if $x$ is within the cut locus of $o$
with  $r(x)>R_0$ we have
\begin{equation}
\label{Delta-r-upperbound1}
\Delta_f r(x) \leq (n-1) \frac{g'}{g}(r(x))
\end{equation}
and that $g$ does not satisfy the conditions in (iii). Then
$M_f$ is not Feller.
\end{theorem}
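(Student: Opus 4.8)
The plan is to derive the failure of the Feller property from Azencott's characterization recalled in Section~\ref{section_Feller}: for a fixed $\lambda>0$ and a relatively compact open set $\Omega$, $M_f$ is Feller if and only if the minimal positive solution $h$ of $\Delta_f h=\lambda h$ on $M\setminus\overline{\Omega}$ with $h=1$ on $\partial\Omega$ tends to $0$ at infinity; this dichotomy is insensitive to the choice of $\lambda$ and $\Omega$, so it suffices to exhibit one admissible pair for which $h\not\to 0$. Take $\lambda=1$, let $R_0$ be a constant for which \eqref{Delta-r-upperbound1} holds (we are free to enlarge it later), and put $\Omega=B_{R_0}$, smoothed if necessary. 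The two ingredients are: (A) a comparison, based on \eqref{Delta-r-upperbound1}, between $h$ and the solution of the associated radial problem on the model determined by $g$; and (B) an ODE argument showing that this radial solution stays bounded away from $0$ precisely because $g$ fails the conditions in (iii).

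For (A), fix regular values $R_k\uparrow +\infty$ of $r$ and let $h_k$ solve $\Delta_f h_k=h_k$ on $B_{R_k}\setminus\overline{\Omega}$ with $h_k=1$ on $\partial\Omega$ and $h_k=0$ on $\partial B_{R_k}$, so that $0<h_k\uparrow h\le 1$. On the model, let $\psi_k$ solve the linear ODE $\psi_k''+(n-1)\frac{g'}{g}\psi_k'=\psi_k$ on $[R_0,R_k]$ with $\psi_k(R_0)=1$, $\psi_k(R_k)=0$. Writing this as $(g^{n-1}\psi_k')'=g^{n-1}\psi_k$ and using $\psi_k>0$ on $(R_0,R_k)$ together with $\psi_k(R_k)=0$, one checks that $g^{n-1}\psi_k'$ is strictly increasing and hence $\psi_k'<0$ on $[R_0,R_k)$; a standard comparison also gives $\psi_k\uparrow\psi$, where $\psi$ solves the same ODE with $\psi(R_0)=1$, $\psi'<0$, and $0<\psi\le 1$. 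Now set $\varphi_k=\psi_k\circ r$ on $\{r>R_0\}$. Away from the cut locus of $o$ we have $\Delta_f\varphi_k=\psi_k''+\psi_k'\,\Delta_f r\ge\psi_k''+(n-1)\frac{g'}{g}\psi_k'=\psi_k=\varphi_k$, the inequality using $\psi_k'\le 0$ and \eqref{Delta-r-upperbound1}; by Calabi's trick this persists in the barrier sense across the cut locus — here it is crucial that $\psi_k$ be nonincreasing, so that the negative singular part of $\Delta_f r$ works in our favour. Since $\varphi_k=h_k$ on $\partial\Omega$ and on $\partial B_{R_k}$, the maximum principle for $\Delta_f-1$ gives $\varphi_k\le h_k$ on $B_{R_k}\setminus\overline{\Omega}$, and letting $k\to\infty$ yields $h\ge\psi\circ r$ on $M\setminus\overline{\Omega}$.

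For (B), note that ``$g$ does not satisfy the conditions in (iii)'' means exactly that $1/g^{n-1}\notin L^1(+\infty)$ and $\int_r^{+\infty}g^{n-1}(t)\,dt/g^{n-1}(r)\in L^1(+\infty)$, the latter forcing $\int^{+\infty}g^{n-1}<+\infty$. From $(g^{n-1}\psi')'=g^{n-1}\psi>0$, the function $g^{n-1}\psi'$ is increasing, and it is bounded since $\psi\le 1$ and $\int^{+\infty}g^{n-1}<+\infty$; hence $g^{n-1}\psi'\to L\le 0$. If $L<0$, then $-\psi'\ge |L|/(2g^{n-1})$ for $r$ large, so $\int^{+\infty}(-\psi')=+\infty$ by the first condition, contradicting $0<\psi\le 1$; therefore $L=0$, i.e. $g^{n-1}(r)\psi'(r)=-\int_r^{+\infty}g^{n-1}\psi$, whence $0<-\psi'(r)\le \int_r^{+\infty}g^{n-1}/g^{n-1}(r)$ because $\psi\le 1$. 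Integrating over $[R_0,+\infty)$ and using $\psi(R_0)=1$,
\[
1-\inf_{r\ge R_0}\psi(r)=\int_{R_0}^{+\infty}(-\psi'(r))\,dr\le\int_{R_0}^{+\infty}\frac{\int_r^{+\infty}g^{n-1}(t)\,dt}{g^{n-1}(r)}\,dr,
\]
and the right-hand side tends to $0$ as $R_0\to+\infty$ by the second condition. Enlarging $R_0$ so that this last integral is $<1$, we obtain $\inf\psi>0$; combining with (A), $h\ge\inf\psi>0$ on $M\setminus\overline{\Omega}$, so $h$ does not vanish at infinity and $M_f$ is not Feller.

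I expect the real difficulty to lie in two ``soft'' points rather than in the computation. The first is running the comparison across the cut locus when $o$ is \emph{not} assumed to be a pole (unlike in (iii)): one must first upgrade \eqref{Delta-r-upperbound1} to a barrier inequality for $\Delta_f r$ on $M\setminus\{o\}$ and then verify that the maximum principle still applies to $\varphi_k=\psi_k\circ r$, which is precisely where the monotonicity $\psi_k'\le 0$ is used. The second is the interplay of the two failed conditions in (B): $1/g^{n-1}\notin L^1(+\infty)$ is what forces the radial flux $g^{n-1}\psi'$ to vanish at infinity, while $\int_r^{+\infty}g^{n-1}/g^{n-1}(r)\in L^1(+\infty)$ is what bounds the total decay of $\psi$ and keeps it off zero; extracting the quantitative estimate from both simultaneously is the heart of the matter. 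One should also record the routine fact that the property ``$h\to 0$ at infinity'' in Azencott's criterion does not depend on the choice of $\Omega$ and $\lambda>0$, so that checking its failure for the single pair constructed above is enough.
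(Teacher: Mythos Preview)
Your argument for (iv) is correct and in fact more self-contained than the paper's. The paper proceeds differently in two respects. First, instead of your finite-domain comparison $\varphi_k\le h_k$ followed by $k\to\infty$, the paper observes that under the hypotheses of (iv) the non-integrability condition of (i) is automatically satisfied (since $g^{n-1}\in L^1(+\infty)$ and $1/g^{n-1}\notin L^1(+\infty)$ force $\int_0^r g^{n-1}/g^{n-1}(r)\notin L^1(+\infty)$), so $M_f$ is stochastically complete; it then invokes the global comparison established in the discussion preceding Theorem~\ref{thm_stoc+feller} to get $\beta\circ r\le c\cdot h$ directly. Second, the paper does not carry out your ODE analysis (B) but simply cites \cite{ps-feller} (Theorem~4.4 and Corollary~5.1) for the facts that the minimal radial solution $\beta$ satisfies $\beta'\le 0$ and does not vanish at infinity when the model $M_g^n$ is not Feller. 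Your route avoids the detour through stochastic completeness and the external reference, at the cost of the extra ODE work; the paper's route is terser but leans on (i) and on results proved elsewhere. Either way the comparison hinges on $\beta'\le 0$ (equivalently your $\psi_k'\le 0$), which is exactly the monotonicity you flag as essential for the Calabi argument across the cut locus.
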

\begin{proof}
We outline the proof, which follows the lines of arguments valid for
the ordinary Laplacian. To prove (i) and (ii), let $\alpha(r)$ be the
function defined by
\[
\alpha(r) = \int_0^r  \frac{\int_0^t g(s)^{n-1} ds}{g((t)^{n-1}}\,
dt
\]
and note that
\[
\alpha'(r)>0\qquad
\alpha'' (r) + (n-1)\frac{g'}g \alpha'(r) =1.
\]
Let also $v(x)=\alpha(r(x))$, so that $v$ is $C^2$ within the cut locus of $o$ and there
\[
\Delta_f v(x) = \alpha''(r(x)) + \Delta_f r(x) \alpha'(r(x)).
\]
Now, in case (ii) $v$ is $C^2$ and bounded above on $M$, and since $\alpha'>0$
it does not attain a maximum. Thus, if $R$ is large enough that
\eqref{Delta-r-lowerbound} holds in $B_R^c$ and $\gamma<u^*$ is sufficiently
 close to $u^*$ that $\Omega_\gamma =\{x: u(x)> \gamma \}\subset B_R^c$, then,
 in $\Omega_\gamma$
\[
\Delta_f v \geq \alpha''(r(x)) + (n-1)\frac{g'} g \alpha'(r(x)) =1,
\]
so that $v$ violates the weak maximum principle at infinity, and $M_f$
is not stochastically complete.

To prove (i), let $u$ be $C^2$ and bounded above on $M$. We claim
that for every $\gamma< u^*$, $\inf_{\Omega_\gamma} \Delta_f u\leq 0$ which
clearly implies that the weak maximum principle at infinity holds for $\Delta_f$
and $M_f$ is stochastically complete.

Note that in the present situation $v$ tends to infinity as $r(x)\to \infty$
and satisfies
\[
\Delta_f v \leq \alpha'' + (n-1) \frac{g'} g \alpha' \leq 1
\]
for $x$ within the cut locus of $o$ and  such that $r(x)>R_0$.

Assume by contradiction that there exists $\gamma<u^*$ such that
$$\inf_{\Omega_c}\Delta_f u\geq 2c>0.$$
Clearly $u$ does not attain its supremum,
and, by taking $\gamma$ close enough to $u^*$, we may arrange that
\eqref{Delta-r-upperbound} holds on $\Omega_{\gamma}$. Let
 $x_o\in \Omega_\gamma$ and choose $0<\delta<c$  small enough that the function
$\tilde u=u-\gamma -\delta v$ is positive at $x_o$. Since  $\tilde u
<0$ on $\Omega_\gamma$, and tends to $-\infty$ as $r(x)\to +\infty$, it attains
a positive maximum at $\overline x \in \Omega_\gamma$, and using the Calabi trick
we may assume that $r(x)$ be smooth at $\bar x$.   Then, at $\bar x $,
\[
\Delta_f \tilde u(\bar x) =
\Delta _f u (\bar x) - \delta \Delta_f v (\bar x)\geq 2c-\delta \geq c>0,
\]
which yields the required contradiction.

We now come to the Feller property. In case (iii), the conditions satisfied by
$g$ imply that the model manifold $M_{g}^{n}$  defined as $\mathbb{R}^{n}$ endowed
with the metric%
\[
\left\langle ,\right\rangle =dr^{2}+g\left(  r\right)  ^{2}d\theta^{2},
\]
is Feller (with respect to the ordinary Laplacian), and,
according to \cite{ps-feller} Theorem~4.4 and Lemma~5.1, the radial minimal
solution $\beta$ of the exterior boundary value problem
\begin{equation}
\label{ext_problem_model}
\begin{cases}
\beta'' + (n-1) {\displaystyle \frac{g'}{g}} \beta' = \lambda \beta&\\
\beta(R_0) =1 &
\end{cases}
\end{equation}
tends to $0$ as $r$ tends to infinity and satisfies $h'(r)<0.$
As above, define $v(x)= \beta(r(x)$, so that $v(x)\to 0$ as $r(x)\to \infty$ and
\[
\Delta_f v = \beta'' + \Delta_f r \beta' \leq \beta'' + (n-1) \frac{g'}g \beta'
=\lambda v.
\]
For every $R>R_0$, let $h_R$ be the solution of the exterior boundary value problem
\begin{equation*}
\begin{cases}
\Delta_f h_R= \lambda h_R&\\
h_R(R_0) =1,\,\, h_R(R)= 1, &
\end{cases}
\end{equation*}
and note that by the comparison principle $h_R <v$ in $B_R\setminus B_{R_0}.$
As $R\to +\infty,$  $h_R$ tends to the minimal solution $h$ of the problem
\begin{equation}
\label{ext-problem}
\begin{cases}
\Delta_f h= \lambda h&\\
h(R_0) =1. &
\end{cases}
\end{equation}
Since clearly $0<h<v$ in $M\setminus B_{R_0}$, $h$ tends to $0$ as $r(x)$
tends to $\infty$ and $M_f$ is Feller.

Finally, assume that \eqref{Delta-r-upperbound1} holds and that $g$
does not satisfy the conditions in (iii) hold.  Note that in particular,
$g^{-n+1}\not\in L^1(+\infty)$ while  $g^{n-1}\in L^1(+\infty)$, and  according to i)
$M_f$ is stochastically complete. By \cite{ps-feller} Theorem~4.4,
the model manifold $M^n_g$ is not Feller, and therefore the minimal radial solution
$\beta$ of the exterior problem \eqref{ext_problem_model} does not tend to zero
as $r\to+\infty$. Since $\beta'\leq 1$ by \cite{ps-feller} Corollary 5.1,
it follows that the function $v(x)=\beta(r(x))$ is a bounded solution of
\[
\Delta_f v\geq \lambda v
\]
which satisfies $v=1$ on $R_0$ and which does not tend to zero at infinity.
 It follows from the discussion preceding Theorem~\ref{thm_stoc+feller} that
the minimal solution of the exterior problem  \eqref{ext-problem} satisfies
$v\leq c \cdot h$ for some constant $c>0$, so that $h$ does not tend to zero at
infinity and $M_f$ is not Feller.
\end{proof}

\begin{remark}
{\rm
In the case that $n$ is an integer, the differential inequalities satisfied by $\Delta_{f}r$ can be interpreted as comparison with the Laplacian of the distance function of model $M_{g}^{n}$.
It is also interesting to observe that in the case of the Feller property
the inequalities assumed for $\Delta_f r$ go in the opposite direction than those
assumed in the case of stochastic completeness.
}
\end{remark}


\
\smallskip

In the case of the ordinary Laplacian, upper and lower estimates for
$\Delta r$ may be obtained via the Laplacian comparison theorem imposing
lower bounds on the Ricci curvature, or upper bounds on the sectional curvature,
respectively. In the case of the $f$-Laplacian, there does not seem to be an
analogue of the sectional curvature whose control allows to obtain lower
estimates for $\Delta_f r$. As for upper estimates, the most effective way
to obtain upper bounds for $\Delta_f r$ is to impose lower bounds on the
modified Bakry-Emery Ricci tensor
\[
Ric_{\alpha}= Ric +\mathop{Hess} f -\frac 1 \alpha df\otimes df
\]
with $\alpha>0,$ but in view of applications to Ricci solitons it
is important to try and obtain estimates for $\Delta_f r$ assuming
lower bound on the Bakry-Emery Ricci tensor $\mathop{Ric}_f$ corresponding
to $\alpha= +\infty$, together with some control on the weight $f$ and
or its gradient.

Indeed, it was shown by Qian, \cite{Qian-AnnProb} Theorem 2.1, in the more general
case of operators of the form $\Delta + X$ with a drift which is
not necessarily a gradient, that upper estimates for $\Delta_f r$
follow from imposing lower bounds on $\mathrm{Ric}_f$ and a control
on the drift term $X$.
More precisely we have:

\begin{theorem}
\label{qian-comparison}
Let $M_f$ be a weighted manifold, let $o$ be a reference point in $M$ and let  $r(x)=d(x,0)$
be the Riemannian distance function from $o$.
\newline
(i)  Assume that
\[
\mathrm{Ric}_f\geq - k^2
\]
for some constant $k\geq 0$, then there exists a constant $C$ depending only on
$M$, on $\nabla f$ and on $o$ such that
\[
\Delta_f r(x)\leq C + \frac{m-1} {r(x)}  + k^2\cdot r(x) \,\, \text{ on
 }\,\, M\setminus \mathrm{cut}(o).
\]
(ii) Assume that
\begin{equation*}
\begin{cases}
\mathrm{Ric}_f (x) \geq - k_1^2 (r(x)) &\\
|\nabla f| (x)\leq k_2 (r(x)), &
\end{cases}
\end{equation*}
where $k_i(r)$ are continuous non-decreasing functions satisfying
$k_i(r)\to +\infty$ as $r\to +\infty$. Then
\[
\Delta_f r(x) \leq m \frac {g'(r(x))}{g(r(x)}\,\, \text{ on
 }\,\, M\setminus \mathrm{cut}(o),
\]
where $g:[0,+\infty )\to [0,+\infty)$ is the solution of the initial
value problem
\begin{equation}
\label{g_ivp}
\begin{cases}
g''(r) - \displaystyle{\frac{k_1(r)^2+k_2(r)^2} m} g(r) = 0&\\
g(0)=0,\,\, g'(0)=1.&
\end{cases}
\end{equation}
(iii) Assume that there are nonnegative constants $k$ and $C$ such that
\begin{equation*}
\begin{cases}
\mathrm{Ric}_f \geq - k &\\
|\nabla f| \leq C(d(x,o) +1) \quad \forall x\in M.&
\end{cases}
\end{equation*}
Then for every $p\in M$, and every $x\in M\setminus
\left(\{p\}\cup \mathrm{cut}(p)\right)$, if $\rho(x)=d (x,p)$, then
\[
\Delta_f \rho(x) \leq \frac{m-1} \rho +\frac 13 (k+2C) \rho +
C(1+d(o,p)).
\]
\qed
\end{theorem}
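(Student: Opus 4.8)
The plan is to reduce all four estimates to the classical one-dimensional Riccati comparison along minimal geodesics issuing from the reference point; the only genuinely new point is the treatment of the drift term $\langle\nabla f,\nabla r\rangle$. Fix a unit speed minimizing geodesic $\gamma:[0,\rho]\to M$ from $o$ to $x$, so that $r(\gamma(t))=t$, and set $\phi(t)=\Delta r(\gamma(t))$ and $\psi(t)=\Delta_f r(\gamma(t))=\phi(t)-v(t)$, where $v(t)=\langle\nabla f,\dot\gamma(t)\rangle=(f\circ\gamma)'(t)$, so that $|v|\le|\nabla f|\circ\gamma$. The Bochner formula applied to $r$ together with the Cauchy-Schwarz bound $|\mathrm{Hess}\,r|^2\ge(\Delta r)^2/(m-1)$ gives the Riccati inequality $\phi'+\phi^2/(m-1)\le-\mathrm{Ric}(\dot\gamma,\dot\gamma)$; writing $\mathrm{Ric}(\dot\gamma,\dot\gamma)=\mathrm{Ric}_f(\dot\gamma,\dot\gamma)-(f\circ\gamma)''$ and subtracting $v'=(f\circ\gamma)''$ from both sides turns this into
$$\psi'+\frac{(\psi+v)^2}{m-1}\le-\mathrm{Ric}_f(\dot\gamma,\dot\gamma).$$
Since $r$ is smooth only off the cut locus, these inequalities are made rigorous by Calabi's trick (perturbing the base point) or by reading $\Delta_f r$ in the barrier sense, exactly as in the unweighted case.

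The crucial step is to absorb the drift $v$ into the quadratic term. Completing the square with the ``effective dimension'' $m$ gives the elementary inequality $(\psi+v)^2\ge\frac{m-1}{m}\psi^2-(m-1)v^2$ (equivalently $\tfrac1m(\psi+mv)^2\ge0$), and hence
$$\psi'+\frac{\psi^2}{m}\le-\mathrm{Ric}_f(\dot\gamma,\dot\gamma)+|\nabla f|^2\circ\gamma.$$
In case (ii) the hypotheses turn this into $\psi'+\psi^2/m\le k_1(t)^2+k_2(t)^2$. Letting $\bar\psi:=m\,g'/g$ with $g$ the solution of \eqref{g_ivp}, one checks directly that $\bar\psi'+\bar\psi^2/m=k_1^2+k_2^2$ and that $\bar\psi(t)\sim m/t>(m-1)/t\sim\psi(t)$ as $t\to0^+$; then $w:=\psi-\bar\psi$ satisfies $w'+\tfrac1m(\psi+\bar\psi)w\le0$ with $w<0$ near $0$, so a Gronwall argument forces $\psi\le\bar\psi$ on $(0,\rho]$, which is the asserted bound $\Delta_f r(x)\le m\,g'(r(x))/g(r(x))$.

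Cases (i) and (iii) are the same Riccati comparison with $\mathrm{Ric}_f$ bounded below by the constant $-k^2$ (resp. $-k$) and $|\nabla f|$ controlled either by a constant or by the linear function $C(d(\cdot,o)+1)$; one then integrates the explicit model ODE (for a constant right-hand side $\kappa^2$ one gets $\bar\psi=\sqrt m\,\kappa\coth(\kappa t/\sqrt m)$) and uses $\coth x\le 1/x+x$ to obtain a bound of the shape $C+\frac{m-1}{r}+k^2r$ after renaming constants; in (iii) one also replaces $o$ by an arbitrary $p$ and invokes the triangle inequality $d(x,o)\le\rho(x)+d(o,p)$ to produce the dependence $C(1+d(o,p))$. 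The main obstacle is precisely the drift absorption: with no two-sided pointwise control of $\partial_r f$ available, the quadratic $(\psi+v)^2$ cannot dominate $\psi$ directly, and it is the sharp completion of the square with constant $1/m$---raising the effective dimension from $m-1$ to $m$---that makes the comparison close and accounts for the $m$ in the denominator of \eqref{g_ivp}; once this is in place, the precise form of $C$ in (i) and (iii) is bookkeeping in the ODE comparison, and in any case (i)--(iii) may be quoted directly from Qian \cite{Qian-AnnProb}.
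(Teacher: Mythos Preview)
The paper does not give its own proof of this theorem: the statement ends with \texttt{\textbackslash qed} and is explicitly attributed to Qian \cite{Qian-AnnProb}, Theorem~2.1. Your Riccati-comparison argument along minimal geodesics, with the drift absorbed via the completion-of-the-square inequality $(\psi+v)^2\ge\tfrac{m-1}{m}\psi^2-(m-1)v^2$, is correct and is exactly Qian's method; in particular your derivation of (ii) is clean and complete.

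One remark on (iii): the drift-absorption trick, as you apply it, feeds $v^2\le\bigl(C(1+d(o,p))+Ct\bigr)^2$ into the Riccati inequality, and the resulting comparison yields a bound whose additive constant scales like $C^2(1+d(o,p))^2$ rather than the linear $C(1+d(o,p))$ stated. To get the sharp form recorded in the theorem one avoids squaring $v$ altogether: in the index-form estimate with test function $\eta(t)=t/\rho$ one integrates $\int_0^\rho\eta^2 f''$ by parts to produce $f'(\rho)-\tfrac{2}{\rho^2}\int_0^\rho t f'(t)\,dt$, the first term cancelling against the drift in $\Delta_f\rho=\Delta\rho-f'(\rho)$ and the second being bounded by $C(1+d(o,p))+\tfrac{2C}{3}\rho$ directly from $|f'(t)|\le C(1+d(o,p))+Ct$. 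This is a bookkeeping issue rather than a gap, and in any case your fallback citation to Qian covers it.
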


Applying Theorem~\ref{th-comparison1} we obtain a versions for the
$\Delta_f$ of results by Qian,  \cite{Qian-AnnProb}, Theorems 1.5 and 1.6, on
the stochastic completeness of Laplacians with drift. We point out
that the proof we present is entirely deterministic.

\begin{theorem}
\label{stoch_completeness}
Let $M_f$ be a complete weighted Riemannian manifold, and assume
that either
\begin{equation}
\label{stoch-completeness1}
\mathrm{Ric}_f\geq - k^2
\quad \text{ for some constant }\,\, k\geq 0,
\end{equation}
or
\begin{equation}
\label{stoch-completeness2}
\begin{cases}
\mathrm{Ric}_f (x) \geq - k_1^2 (r(x)) &\\
|\nabla f| (x)\leq k_2 (r(x)), &
\end{cases}
\end{equation}
where $k_i(r)$ are continuous non-decreasing functions satisfying
$k_i(r)\to +\infty$ as $r\to +\infty$ and
\begin{equation}
\label{stoch-completeness2a}
\frac 1{\sqrt{k_1^2(t)+k_2^2(t)}}\not\in L^1(+\infty).
\end{equation}
Then $M_f$ is stochastically complete for $\Delta_f$.
\end{theorem}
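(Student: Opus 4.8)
The plan is to use the upper Laplacian comparison estimates of Theorem~\ref{qian-comparison} to feed the stochastic completeness criterion Theorem~\ref{th-comparison1}(i). In both cases the task reduces to exhibiting a $C^{2}$ odd function $g$ with $g(0)=0$, $g'(0)=1$, $g>0$ on $(0,+\infty)$, together with a constant $n>1$, such that $\Delta_{f}r(x)\le (n-1)\,g'(r(x))/g(r(x))$ for $x\notin\mathrm{cut}(o)$ with $r(x)$ large, and such that the Khasminskii-type condition of \eqref{Delta-r-upperbound}, namely $\int_{0}^{r}g(t)^{n-1}\,dt / g(r)^{n-1}\notin L^{1}(+\infty)$, holds. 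Once such a pair $(g,n)$ is produced, Theorem~\ref{th-comparison1}(i) immediately gives that $M_{f}$ is stochastically complete for $\Delta_{f}$.

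Under hypothesis \eqref{stoch-completeness1}, Theorem~\ref{qian-comparison}(i) yields $\Delta_{f}r(x)\le C+(m-1)/r(x)+k^{2}r(x)$ outside $\mathrm{cut}(o)$, hence $\Delta_{f}r(x)\le C' r(x)$ for $r(x)\ge 1$ with $C'=C+m-1+k^{2}$. I would then take $g$ to be any $C^{2}$ odd function with $g(0)=0$, $g'(0)=1$, $g>0$ on $(0,+\infty)$, which coincides with $g(r)=re^{r^{2}/2}$ for $r\ge 1$. For such $g$ one has $g'/g=r^{-1}+r\ge r$ on $[1,+\infty)$, so choosing $n$ with $n-1\ge\max\{1,C'\}$ gives $(n-1)g'/g\ge C'r\ge\Delta_{f}r$ for $r\ge1$. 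An integration by parts shows $\int_{0}^{r}g(t)^{n-1}\,dt\sim (n-1)^{-1}r^{n-2}e^{(n-1)r^{2}/2}$, whence $\int_{0}^{r}g(t)^{n-1}\,dt/g(r)^{n-1}\sim 1/((n-1)r)\notin L^{1}(+\infty)$, and Theorem~\ref{th-comparison1}(i) applies.

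Under hypothesis \eqref{stoch-completeness2}, Theorem~\ref{qian-comparison}(ii) gives $\Delta_{f}r(x)\le m\,g'(r(x))/g(r(x))$, where $g$ solves the initial value problem \eqref{g_ivp}; a convexity argument (since $g''=b^{2}g\ge0$ once $g>0$) shows $g>0$ on $(0,+\infty)$, $g'\ge1$, $g(r)\ge r$, and the odd extension of $g$ is $C^{2}$ because $g''(0)=0$, so $g$ is admissible in Theorem~\ref{th-comparison1}(i) with $n=m+1$ and the comparison inequality holds. The remaining, substantive step is to deduce from \eqref{stoch-completeness2a} that $\int_{0}^{r}g(t)^{m}\,dt/g(r)^{m}\notin L^{1}(+\infty)$. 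Put $b(r)=\sqrt{(k_{1}^{2}(r)+k_{2}^{2}(r))/m}$, so $b$ is continuous, non-decreasing, $b(r)\to+\infty$, and $g''=b^{2}g$. Multiplying by $g'$ gives $((g')^{2})'=b^{2}(g^{2})'$, and integrating, using that $b^{2}$ is non-decreasing and $(g^{2})'\ge0$, yields $(g')^{2}(r)=1+\int_{0}^{r}b^{2}(t)(g^{2})'(t)\,dt\le 1+b^{2}(r)g(r)^{2}$; hence, for $r$ large, $(\log g)'(r)=g'(r)/g(r)\le\sqrt{2}\,b(r)$. Consequently, for $r$ large and $\delta=\delta(r):=1/b(r)\in(0,r)$ one gets $\log\big(g(r)/g(r-\delta)\big)=\int_{r-\delta}^{r}(\log g)'\le\sqrt{2}\,b(r)\delta=\sqrt{2}$, and therefore, since $g$ is increasing,
\[
\frac{\int_{0}^{r}g(t)^{m}\,dt}{g(r)^{m}}\ \ge\ \frac{\int_{r-\delta}^{r}g(t)^{m}\,dt}{g(r)^{m}}\ \ge\ \delta\,\frac{g(r-\delta)^{m}}{g(r)^{m}}\ \ge\ e^{-\sqrt{2}\,m}\,\frac{1}{b(r)}.
\]
Since $1/b(r)=\sqrt{m}/\sqrt{k_{1}^{2}(r)+k_{2}^{2}(r)}\notin L^{1}(+\infty)$ by \eqref{stoch-completeness2a}, the left-hand side is not in $L^{1}(+\infty)$ either, and Theorem~\ref{th-comparison1}(i) again gives the stochastic completeness of $M_{f}$.

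The only genuine obstacle is the last estimate in case \eqref{stoch-completeness2}: one must control the growth of the comparison function $g$ solving $g''=b^{2}g$ well enough to transfer the non-integrability of $1/\sqrt{k_{1}^{2}+k_{2}^{2}}$ into non-integrability of the Khasminskii ratio of $g$. The energy identity $\tfrac12((g')^{2})'=\tfrac12 b^{2}(g^{2})'$ combined with the monotonicity of $b$ is exactly what makes this transfer work; everything else — the choice of $g$ in case \eqref{stoch-completeness1}, the bookkeeping of constants, and the final invocation of Theorems~\ref{qian-comparison} and \ref{th-comparison1} — is routine.
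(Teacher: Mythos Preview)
Your proof is correct and follows the same overall strategy as the paper: feed Qian's Laplacian comparison estimates (Theorem~\ref{qian-comparison}) into the Khasminskii-type criterion Theorem~\ref{th-comparison1}(i), reducing the problem to checking the non-integrability of $\int_{0}^{r}g^{n-1}/g(r)^{n-1}$ for a suitable comparison function $g$. The only notable technical difference is in case~\eqref{stoch-completeness2}: the paper bounds $\psi=g'/g$ by $k=b$ via the Riccati equation $\psi'+\psi^{2}=k^{2}$ (observing that $\psi'<0$ where $\psi>k$, so eventually $\psi\le k$) and then applies l'H\^opital to get $\liminf k(r)\int_{0}^{r}g^{m}/g(r)^{m}\ge 1/m$, whereas you obtain the slightly weaker bound $g'/g\le\sqrt{2}\,b$ from the energy identity $((g')^{2})'=b^{2}(g^{2})'$ and then estimate the ratio directly on the window $[r-1/b(r),r]$; both routes yield $\int_{0}^{r}g^{m}/g(r)^{m}\ge C/b(r)\notin L^{1}(+\infty)$, so the difference is cosmetic.
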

\begin{proof}
If \eqref{stoch-completeness1} holds, then $\Delta_f r$
satisfies the estimate (i) in Theorem~\ref{qian-comparison},
and it is easily seen that this implies \eqref{Delta-r-upperbound} with
$n=m$ and $$g(r)=r\exp\bigl[\frac 1{m-1}\bigl(Cr + \frac k2
r^2\bigr)\bigr],$$ and it is clear that $g(r)$ satisfies the
non-integrability in \eqref{Delta-r-upperbound}, and $M_f$ is stochastically complete by
Theorem~\ref{th-comparison1} (i).

Assuming  that \eqref{stoch-completeness2} and
\eqref{stoch-completeness2a} hold, then $\Delta_f r(x)$ satisfies the
estimate in Theorem~\ref{qian-comparison} (ii) with $g$  solution of
\eqref{g_ivp}. This implies that $\psi = g'/g$ satisfies the Riccati
equation
\[
\psi' + \psi^2 = k^2,\quad \text{ with } k(r)^2 =
\frac{k_1(r)^2+k_2(r)^2}{m},
\]
and  $\psi(r) = \displaystyle{\frac 1 r} + O(1)$ as $r\to 0$. Thus $\psi'<0$ whenever $\psi>k$ and since
$k(r)$ is increasing and tends to $+\infty$ as $r\to +\infty$,
standard arguments show that there exists $r_o$ such that $\psi(r_o)= k(r_o)$ and
$\psi(r)\leq k(r)$ for all $r\geq r_o$.

This in turn implies that there exists  $R$ sufficiently large and a
constant $C>0$ such that
\[
\frac{\int_0^r g(t)^m dt }{g(r)^m} \geq  \frac C
{ k(r)} \quad \text{if }\, r\geq R.
\]
Indeed,  $g(r)\to +\infty $ as $r\to +\infty$ and,  using de l'Hospital's
rule, and the fact that $k$ is non-decreasing we have
\[
\liminf_{r\to \infty} \frac{k(r) \int_0^r g(t)^m dt}{g(r)^m} \geq
\liminf_{r\to \infty}\frac{ k(r) g(r)}{m g'(r)} \geq \frac 1 m.
\]
By \eqref{stoch-completeness2a}, $1/k(r)\not\in L^1(+\infty)$ and, again  by
Theorem~\ref{th-comparison1} (i), $M_f$ is stochastically complete.
\end{proof}

We stress that, in order to deduce the validity of the Feller property
using Theorem~\ref{th-comparison1} (iii), one needs a control from
below on $\Delta_f r$, while lower bounds on $Ric_f$ typically produce upper
estimates on $\Delta_f r$.

Using a probabilistic technique which extends to the Laplacian with a drift term
a previous result of P. Hsu, \cite{Hsu-AnnProb} for $\Delta$, and which essentially
consists in genuine estimates on hitting time probabilities, Qian \cite[Theorem  1.7]{Qian-AnnProb}
 proves the following theorem. Note that, again,  Qian's result actually applies
to the more general operators of the form $L=\Delta +X$.

\begin{theorem}
\label{qian-feller}
Let $M_f$ be a complete weighted manifold and assume that for some $o\in M$ the
conclusion of Theorem~\ref{qian-comparison} (iii) holds, namely, there exist constants
$k$ and $C>$ such that,  for every $p\in M$, if $\rho(x)=d(x,p)$ denotes the distance function
from $p$, we have
\[
\Delta_f \rho (x)
 \leq \frac{m-1} \rho +\frac 13 (k+2C) \rho +
C(1+d(o,p)).
\]
Then $M_f$ is Feller with respect to $\Delta_f$. In particular, if
$Ric_f$ and $\nabla f$ satisfy the estimates in
Theorem~\ref{qian-comparison} (iii), then $M_f$ is Feller with
respect to $\Delta_f.$
\end{theorem}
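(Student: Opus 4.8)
The plan is to recognise that, writing $\Delta_{f}=\Delta+X$ with $X=-\nabla f$, the statement is essentially \cite[Theorem 1.7]{Qian-AnnProb} applied to $L=\Delta_{f}$: Qian's theorem holds for a general operator $L=\Delta+X$ on a complete manifold and asks for exactly the bound on $L\rho_{p}$, uniform in the pole $p$, that we are assuming here. The ``in particular'' then follows at once by inserting Theorem~\ref{qian-comparison}(iii) into the first assertion, so that the real task is to reconstruct Qian's (probabilistic) argument, which extends to the drifted setting P.~Hsu's method \cite{Hsu-AnnProb} for $\Delta$. First, by Azencott's characterisation recalled before Theorem~\ref{thm_stoc+feller}, and since $M_{f}$ is stochastically complete — this follows from Theorem~\ref{th-comparison1}(i) applied at $p=o$, because our hypothesis yields $\Delta_{f}r\le(m-1)\frac{g'}{g}(r)$ for $g(r)=r\exp\bigl[\frac{1}{m-1}\bigl(Cr+\frac{k+2C}{6}r^{2}\bigr)\bigr]$, which satisfies the non-integrability condition in \eqref{Delta-r-upperbound} — it is enough to show that for every $\Omega\subset\subset M$ and every $\lambda>0$ the minimal positive solution $h$ of $\Delta_{f}h=\lambda h$ on $M\setminus\overline{\Omega}$ with $h=1$ on $\partial\Omega$ tends to $0$ at infinity. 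By the Feynman--Kac formula, $h(x)=\mathbb{E}_{x}\bigl[e^{-\lambda\tau_{\Omega}}\,;\,\tau_{\Omega}<\infty\bigr]$, where $\tau_{\Omega}$ is the hitting time of $\Omega$ for the diffusion generated by $\Delta_{f}$; hence $h(x)\le\mathbb{P}_{x}(\tau_{\Omega}\le T)+e^{-\lambda T}$, and it suffices to prove $\mathbb{P}_{x}(\tau_{\Omega}\le T)\to0$ as $x\to\infty$, for each fixed $T>0$.

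To estimate this hitting probability one applies the It\^{o}--Kendall formula to the distance $\rho_{p}(X_{s})$ from a point $p$, namely $\rho_{p}(X_{s})=\rho_{p}(X_{0})+B_{s}+\tfrac{1}{2}\int_{0}^{s}\Delta_{f}\rho_{p}(X_{u})\,du-A_{s}^{p}$ with $B$ a one-dimensional Brownian motion and $A^{p}\ge0$ non-decreasing and supported on the cut locus of $p$. The essential point is that the hypothesis bounds $\Delta_{f}\rho_{p}$ only \emph{from above}, hence controls how fast $\rho_{p}(X_{s})$ can \emph{grow}, not how fast it can decrease; one is therefore led to monitor the distance of $X_{s}$ from its own \emph{starting point} $x$ rather than from $\Omega$. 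Taking $p=X_{0}=x$ and using $\Delta_{f}\rho_{x}\le\frac{m-1}{\rho_{x}}+\frac{k+2C}{3}\rho_{x}+C(1+d(o,x))$, a Gronwall-type comparison with the radial diffusion on the associated model yields, for $0\le s\le T$,
\[
\rho_{x}(X_{s})\le e^{(k+2C)T/6}\Bigl(\sup_{0\le u\le T}B_{u}+\Xi_{T}\Bigr)+C_{T}\,(1+d(o,x)),
\]
where $C_{T}\to0$ as $T\to0$, and $\Xi_{T}$ — with law independent of $x$ — absorbs the singular term $\frac{m-1}{\rho_{x}}$ near $x$, harmless since $\int_{0}^{s}\rho_{x}(X_{u})^{-1}\,du<\infty$ by a Bessel-type estimate. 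Since hitting $\Omega$ forces $\rho_{x}(X_{s})\ge d(x,\Omega)\ge d(o,x)-c_{\Omega}$ with $c_{\Omega}$ depending only on $\Omega$ and $o$, whenever $T$ is small enough that $C_{T}<1$ the event $\{\tau_{\Omega}\le T\}$ sits inside $\{e^{(k+2C)T/6}(\sup_{0\le u\le T}B_{u}+\Xi_{T})\ge(1-C_{T})d(o,x)-c_{\Omega}\}$, whose probability tends to $0$ as $d(o,x)\to\infty$. The restriction to small $T$ is then lifted by a Markov-property bootstrap: over one such short step the diffusion remains, with probability tending to $1$, at distance of order $d(o,x)$ from $\Omega$, so the short-time estimate can be iterated up to any prescribed $T$.

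The genuine obstacle is exactly this one-sided control of $\Delta_{f}\rho_{p}$, together with the term $C(1+d(o,p))$ in the drift bound. It is what rules out the radial-barrier scheme of Theorem~\ref{th-comparison1}(iii) here — that would require a \emph{lower} bound on $\Delta_{f}r$ from a fixed centre — it is what forces one to measure escape through the distance from the moving basepoint, hence to invoke the estimate at \emph{every} pole $p$ and not merely at $o$, and — since $C(1+d(o,p))$ is large precisely when $p$ lies far from $o$ — it is what confines the direct estimate to short times and makes the Markov bootstrap unavoidable. By contrast, the handling of the cut locus (through the favourable sign of $-A_{s}^{p}$) and of the singularity of $\Delta_{f}\rho_{p}$ at $p$ is routine once this scheme is in place.
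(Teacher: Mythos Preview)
The paper does not actually prove this theorem: it simply attributes the result to Qian \cite[Theorem~1.7]{Qian-AnnProb}, describing it as ``a probabilistic technique which extends to the Laplacian with a drift term a previous result of P.~Hsu \cite{Hsu-AnnProb} for $\Delta$, and which essentially consists in genuine estimates on hitting time probabilities.'' There is no proof block following the statement; the ``in particular'' clause is immediate from Theorem~\ref{qian-comparison}(iii), exactly as you say.

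Your proposal is therefore not in conflict with the paper's proof --- it \emph{is} the proof the paper defers to, reconstructed. You correctly identify the citation, correctly derive stochastic completeness from the hypothesis at $p=o$ via Theorem~\ref{th-comparison1}(i), and your outline of the Hsu--Qian scheme (Feynman--Kac representation of the minimal solution, It\^{o}--Kendall formula for $\rho_{x}(X_{s})$ with the favourable sign of the cut-locus local time, comparison from the moving basepoint, and the Markov bootstrap from short to arbitrary times) matches the structure of the cited argument. Your diagnosis of why the deterministic barrier method of Theorem~\ref{th-comparison1}(iii) is unavailable here --- the hypothesis is an \emph{upper} bound on $\Delta_{f}\rho$, not a lower one --- is also the point the paper makes just before stating the theorem.

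If you intend this sketch to stand in for a full proof rather than a pointer to \cite{Qian-AnnProb}, two places would need tightening: the Gronwall/Bessel comparison producing the displayed bound on $\rho_{x}(X_{s})$ (the interaction of the singular term $(m-1)/\rho_{x}$ with the linear and constant drift terms needs an explicit comparison process, not just an appeal to ``Bessel-type estimates''), and the bootstrap step (one must track how $d(o,X_{T})$, and hence the constant $C(1+d(o,X_{T}))$ entering the next short-time estimate, is controlled after each iteration). Both are handled carefully in Hsu's and Qian's papers, so a citation suffices; but as written these are sketches of ideas rather than arguments.
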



We next describe a result which extends a condition on the validity of
the full Omori-Yau maximum principle for the operator $\Delta_f$
proved in \cite{prrs-almostsolitons}. The argument we are going to
use is an  adaptation of a recent elegant proof of the Omori-Yau maximum principle
due to A. Borb{\'e}ly, \cite{borbely-bullaustr} and \cite{borbely-omori}.
We are grateful to A. Borb{\'e}ly for sending us a copy of
\cite{borbely-omori}.

\begin{theorem}
\label{generalymoriyau}
Let $M_f$ be a complete weighted  manifold and assume that
there exists a non-negative $C^2$ function $\gamma$ satisfying the following conditions
\begin{eqnarray}
\label{hp1} &\gamma\left(x\right)\rightarrow +\infty
\text{ as } x\rightarrow\infty\\
\label{hp2} &\exists A>0 \text{ such that } \left|\nabla \gamma\right|\leq
A\text{ off a compact set}\\
\label{hp3}&\exists B>0 \text{ such that } \Delta_{f}\gamma\leq B G\left(\gamma \right)
 \text{ off a compact set}
\end{eqnarray}
where $G$ is a smooth function on $\left[0,+\infty\right)$ satisfying
\begin{equation}\label{hp5}
\left(i\right)\,G\left(0\right)>0 \quad\left(ii\right)\,G^{\prime}\left(t\right)\geq 0
\text{ on } \left[0,+\infty\right)\quad \left(iii\right)G\left(t\right)^{-1}\notin
L^{1}\left(+\infty\right).
\end{equation}
Then the Omori-Yau maximum principle for $\Delta_{f}$ holds. The
same conclusion holds if $\gamma (r)=r(x)$ is the distance function
from a reference point $o$ and we assume that inequality
\eqref{hp3} holds in the complement of the cut locus of $o$ (while of course
\eqref{hp1} and \eqref{hp2} are automatically satisfied).
\end{theorem}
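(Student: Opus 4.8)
The plan is to argue by contradiction using a Khas'minskii-type perturbation of $u$ built out of $G$. First I would recast the Omori--Yau principle in its equivalent $\varepsilon$-form: it holds for $\Delta_{f}$ iff for every $u\in C^{2}(M)$ bounded above, with $u^{*}:=\sup_{M}u$, and every $\varepsilon>0$ there is a point $x$ with $u(x)>u^{*}-\varepsilon$, $|\nabla u(x)|<\varepsilon$ and $\Delta_{f}u(x)<\varepsilon$ (the sequence $\{x_{k}\}$ is recovered by taking $\varepsilon=1/k$). So I fix such a $u$ and suppose, for contradiction, that this fails for some $\varepsilon_{0}>0$, i.e.
\begin{equation*}
u(x)>u^{*}-\varepsilon_{0}\ \Longrightarrow\ |\nabla u(x)|\geq\varepsilon_{0}\ \text{ or }\ \Delta_{f}u(x)\geq\varepsilon_{0}.\tag{$\star$}
\end{equation*}
By \eqref{hp5}(i)--(ii) we have $G\geq G(0)>0$ on $[0,+\infty)$, so the function
\[
\varphi(t):=1+\int_{0}^{t}\frac{ds}{G(s)}
\]
is well defined, smooth, satisfies $\varphi\geq 1$, $\varphi'=1/G\in(0,1/G(0)]$, $\varphi''=-G'/G^{2}\leq 0$, and, by \eqref{hp5}(iii), $\varphi(t)\to+\infty$ as $t\to+\infty$; hence, by \eqref{hp1}, $\varphi(\gamma(x))\to+\infty$ as $x\to\infty$, and $\varphi\circ\gamma\in C^{2}(M)$ when $\gamma\in C^{2}$ (the distance-function case is treated at the end).

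Next I would record the two estimates that drive the scheme. Let $K$ be the compact set outside which \eqref{hp2} and \eqref{hp3} hold. On $M\setminus K$, the chain rule together with $\varphi'>0$, \eqref{hp3} and $\varphi''\leq 0$ gives
\[
\Delta_{f}\big(\varphi(\gamma)\big)=\varphi'(\gamma)\,\Delta_{f}\gamma+\varphi''(\gamma)\,|\nabla\gamma|^{2}\leq \frac{1}{G(\gamma)}BG(\gamma)+0=B,
\]
and, using \eqref{hp2} and $\varphi'\leq 1/G(0)$,
\[
\big|\nabla\big(\varphi(\gamma)\big)\big|=\varphi'(\gamma)\,|\nabla\gamma|\leq \frac{A}{G(0)}=:A'.
\]
Now for $\varepsilon>0$ I would consider $u_{\varepsilon}:=u-\varepsilon\,\varphi(\gamma)$. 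If $u$ attains its supremum the three conditions hold trivially at a maximum point, so assume it does not. Since $u$ is bounded above and $\varphi(\gamma)\to+\infty$, $u_{\varepsilon}\to-\infty$ at infinity and thus attains its maximum at some $\bar x_{\varepsilon}\in M$. Fixing once and for all a point $p_{0}$ with $u(p_{0})>u^{*}-\varepsilon_{0}/4$, one gets $u_{\varepsilon}(\bar x_{\varepsilon})\geq u_{\varepsilon}(p_{0})=u(p_{0})-\varepsilon\varphi(\gamma(p_{0}))>u^{*}-\varepsilon_{0}/2$ as soon as $\varepsilon\varphi(\gamma(p_{0}))<\varepsilon_{0}/4$, hence, since $\varphi(\gamma)\geq 1$, $u(\bar x_{\varepsilon})\geq u_{\varepsilon}(\bar x_{\varepsilon})+\varepsilon>u^{*}-\varepsilon_{0}/2$. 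A short compactness argument (using that $u$ does not attain its supremum and that $\varphi\circ\gamma$ is continuous) shows $\bar x_{\varepsilon}\to\infty$ as $\varepsilon\to0^{+}$, so $\bar x_{\varepsilon}\in M\setminus K$ for all small $\varepsilon$.

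Finally I would evaluate at $\bar x_{\varepsilon}$. When $\gamma\in C^{2}$, $u_{\varepsilon}$ is $C^{2}$ near $\bar x_{\varepsilon}$, so $\nabla u_{\varepsilon}(\bar x_{\varepsilon})=0$ and $\Delta u_{\varepsilon}(\bar x_{\varepsilon})\leq 0$; since $\Delta_{f}$ and $\Delta$ coincide where the gradient vanishes, $\Delta_{f}u_{\varepsilon}(\bar x_{\varepsilon})\leq 0$. Hence, by the two estimates above,
\[
|\nabla u(\bar x_{\varepsilon})|=\varepsilon\,\big|\nabla(\varphi(\gamma))(\bar x_{\varepsilon})\big|\leq\varepsilon A',\qquad \Delta_{f}u(\bar x_{\varepsilon})=\Delta_{f}u_{\varepsilon}(\bar x_{\varepsilon})+\varepsilon\,\Delta_{f}(\varphi(\gamma))(\bar x_{\varepsilon})\leq\varepsilon B.
\]
Taking $\varepsilon<\min\{\varepsilon_{0}/(2A'),\ \varepsilon_{0}/(2B),\ \varepsilon_{0}/(4\varphi(\gamma(p_{0})))\}$ yields a point $\bar x_{\varepsilon}$ with $u(\bar x_{\varepsilon})>u^{*}-\varepsilon_{0}$, $|\nabla u(\bar x_{\varepsilon})|<\varepsilon_{0}$ and $\Delta_{f}u(\bar x_{\varepsilon})<\varepsilon_{0}$, contradicting $(\star)$. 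In the distance-function case $\gamma(x)=r(x)$, the only delicate point is that $\varphi(r)$ is merely Lipschitz across $\mathrm{cut}(o)$: if $\bar x_{\varepsilon}\in\mathrm{cut}(o)$ I would replace $r$ near $\bar x_{\varepsilon}$ by $r_{\eta}=\eta+d(\cdot,o_{\eta})$, with $o_{\eta}$ a small displacement of $o$ along a minimizing geodesic to $\bar x_{\varepsilon}$, so that $r_{\eta}$ is smooth near $\bar x_{\varepsilon}$, $r_{\eta}\geq r$ with equality at $\bar x_{\varepsilon}$, and the comparison bound $\Delta_{f}r_{\eta}\leq BG(r_{\eta})+o(1)$ survives; running the same computation for $u-\varepsilon\varphi(r_{\eta})$ and letting $\eta\to0$ closes the gap. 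I expect this Calabi-trick bookkeeping in the cut-locus case — together with the remark that the comparison estimate is essentially insensitive to moving the pole — to be the only genuinely delicate part; the perturbation argument itself is routine once $\varphi$ is in hand.
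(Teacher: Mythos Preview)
Your proof is correct and follows essentially the same Khas'minskii--Borb\'ely perturbation strategy as the paper: build from $G$ an exhaustion with controlled gradient and $\Delta_{f}$, subtract a small multiple of it from $u$, locate the interior maximum, and read off the gradient and $f$-Laplacian estimates there, invoking the Calabi trick when $\gamma=r$. The only cosmetic difference is that the paper takes $F(t)=\exp\bigl(\int_{0}^{t}G^{-1}\bigr)$ and finds the critical touching parameter $\lambda_{o}$ for the family $h_{\lambda}=\lambda F(\gamma)+u^{*}-\epsilon$, whereas you use the antiderivative $\varphi(t)=1+\int_{0}^{t}G^{-1}$ directly; your choice is mildly cleaner since $\varphi''\leq 0$ kills the $|\nabla\gamma|^{2}$ term in the $\Delta_{f}$ estimate without invoking \eqref{hp2}.
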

\begin{proof}We outline the proof which follows closely Borbely's argument.

Let $u$ be a $C^2$  function such that $u^* = \sup_M
u<+\infty$. We want to show that there exists a sequence $\{x_n\}$ such that
$u(x_n)\to u^*$, $\lim_n |\nabla u(x_n)|= 0$ and $\limsup_n\Delta_f u(x_n) \leq
0$.
We may assume that $u$ does not attain its supremum for otherwise the
conclusion is obvious. Define
\[
F(t) = \exp\{\int_0^t G(s)^{-1} ds\},
\]
so that $F$ is non-decreasing and $\lim_{t\to +\infty}F(t) =
+\infty$.  For every $t$, let  $\Omega_t=\{x \, :\, \gamma(x)>t\}$. Since $\gamma$
is proper, there exists $t_o$  such that inequalities \eqref{hp2} and \eqref{hp3} hold
in the set $\Omega_{t_o}$. Let $\epsilon<\min\{1, u^*-\sup_{\Omega_{t_o}^c} u
\}$, and define  $h_\lambda (x)= \lambda F(\gamma(x)) + u^*-\epsilon$.
Since $F\geq 1,$ if $\lambda >\epsilon$ then $h_\lambda >u^*>u$ on
$M.$ Let $\lambda_o=\inf\{\lambda \,:\, h_\lambda(x)>u(x) \,\forall x\in
M\}$, and note that since $u<u^*$ on $M$ then $\lambda_o>0,$ and, by
continuity, $h_{\lambda_o}(x)\geq u(x)$ for every $x$ in $M$.

We claim that there exists $x_\epsilon $ such that $h_{\lambda_o}(x_\epsilon)=
u(x_\epsilon)$.
Note that since $h_{\lambda_o}> u^*-\epsilon>\sup_{\Omega_{t_o}^c}
u$, $x_\epsilon$ lies necessarily in $\Omega_{t_o}$.

To prove the claim, we will show that if  $h_{\lambda}>u$ on $M$ for some $\lambda>0$ then
there exists $\lambda'<\lambda$ such that $h_{\lambda'}>u$ on $M$.
Indeed, since $F(\gamma)\to +\infty$ as $\gamma\to +\infty$,  and
$\gamma$ is proper, there exists $t_1>t_o$ such  that $h_\lambda
>u^*+1$ in $\Omega_{t_1}$. Since $\Omega_{t_1}^c$ is compact, and $h_\lambda(x)>u(x)$
on $M$, we may choose  $\lambda'<\lambda$  sufficiently close to
$\lambda$ so that $h_{\lambda'}>u$ on $\Omega_{t_1}^c$, and
$h_{\lambda'}>u^*$ on $\partial \Omega_{t_1}$. Since $F$ is increasing, $h_{\lambda'}>u^*$ in
$\Omega_{t_1}$ and therefore $h_{\lambda'}>u$ on $M$, as required.

Next we claim that $h_{\lambda_o}$ is smooth at $x_\epsilon$. This
is clear if $\gamma$ is $C^2$ on $M$, while if $\gamma(x)=r(x)$ is
the Riemannian distance function, the proof in Borbely's paper,
which only uses the fact that $u- h_{\lambda_o}$ attains a maximum at $x_\epsilon$,
and properties of the function $r(x)$ applies without changes.

Thus, since
$u(x_\epsilon)=h_{\lambda_o}(x_\epsilon)=
\lambda_o F(\gamma(x_\epsilon)) + u^* - \epsilon< u^*= \sup u
$
we have
\[
u(x_\epsilon) >u^* - \epsilon \,\text{ and } \,
\lambda_oF(\gamma(x_\epsilon)) <\epsilon.
\]
Also, since $u-h_{\lambda_o}$ attains a maximum at $x_\epsilon$,
\begin{equation*}
\begin{split}
& (i)\,\, \nabla u(x_\epsilon) = \lambda_o F'(\gamma(x_\epsilon))
\nabla \gamma (x_\epsilon) \\
& (ii) \, \Delta_f u(x_\epsilon) \leq \Delta_f h_{\lambda_o}
(x_\epsilon)= \lambda_o [F''(\gamma(x_\epsilon)) |\nabla
\gamma(x_\epsilon)|^2 + F'(\gamma(x_\epsilon)) \Delta_f
\gamma(x_\epsilon)].
\end{split}
\end{equation*}
Easy computations show that $F'=F/G$ and $F''\leq F/G^2$ so that (i)
above and \eqref{hp2} yield
$$
|\nabla u(x_\epsilon)|\leq  A\lambda_o
\frac{F(\gamma(x_\epsilon))}{G(\gamma(x_\epsilon))} < \frac{A}{G(0)}
\epsilon,
$$
while using (ii) and \eqref{hp3} we get
\[
\Delta_f u(x_\epsilon) \leq \lambda_o F(\gamma(x_\epsilon))
[\frac 1{G^2(\gamma(x_\epsilon))} + \frac{\Delta_f
u(x_\epsilon)}{G(\gamma(x_\epsilon))}]\leq
(1+B)\lambda_oF(x_\epsilon)<(1+B) \epsilon.
\]
\end{proof}

We conclude this section with a brief discussion of the compact support
principle for the operator $\Delta_f$ (see, \cite{PSZ-JMPA} \cite{puriser-jde},
where more general quasilinear elliptic operators in  divergence form are considered).

A function $u$ is said to be a semiclassical solution of the differential inequality
\begin{equation}
\label{ineq-cpt-supp}
\Delta_f u \geq \lambda(u)
\end{equation}
in a domain $\Omega$ if $u\in C^1(\Omega)$ and $u$ satisfies the
inequality in weak sense, that is
\[
\int_\Omega [\langle \nabla f,\nabla \phi\rangle + \lambda (u)\phi]\leq 0
\]
for every non-negative $\phi\in C^1_c(\Omega)$.

The compact support principle is said to hold for the differential inequality
\eqref{ineq-cpt-supp} if,  whenever $\Omega$ is an exterior domain, namely
$\Omega\supset M\setminus B_R(o)$ for some $R>0$, and $u\geq 0$ is a semiclassical solution
of \eqref{ineq-cpt-supp} in $\Omega$ with the property that $u(x)\to 0$ as
$r(x)\to +\infty$ that $u$ vanishes identically  outside a compact set.

We are grateful to M. Rigoli for pointing  that the validity of the compact
support principle depends essentially on the properties of the operator and
of the function $\lambda$, and that the effect of the geometry is once again encoded by
the behavior of $\Delta_f r(x)$.

Indeed, the proof of Theorem 1.1 in \cite{puriser-jde}, may be adapted with minor
changes to obtain the following

\begin{theorem}
\label{thm-cpt-supp}
Let $M_f$ be a complete weighted manifold, and let $\lambda$ be a
continuous function on $[0,+\infty)$ which vanishes in $0$ and is non-decreasing and
strictly positive on some interval $(0,\delta)$, $\delta>0$. Set $\Lambda (t) =
\int _0^t\lambda (s)ds$. If
\begin{equation}
\label{cpt-supp-1}
\frac {1}{\sqrt{\Lambda (s)}} \in L^1(0+),
\end{equation}
and there exists $C>0$ such that the differential inequality
\begin{equation}
\label{cpt-supp-2}
\Delta_f r(x) \geq -C^2>-\infty
\end{equation}
holds weakly on $M$, then the compact support principle holds for the differential
inequality \eqref{ineq-cpt-supp}.
\end{theorem}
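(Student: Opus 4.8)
The plan is to follow the comparison scheme used in \cite{puriser-jde} for the ordinary Laplacian, the one new ingredient being the reduction, via \eqref{cpt-supp-2}, of the $f$-Laplacian of a radial function to a one-dimensional model operator. Concretely, the strategy is to construct a \emph{radial} supersolution of $\Delta_f\,\cdot\,=\lambda(\cdot)$ that is positive on an exterior annulus $\{R_1\le r< R_2\}$, non-increasing, equal to a prescribed small value $a$ on $\partial B_{R_1}$, and which reaches the value $0$ with vanishing derivative at the \emph{finite} radius $R_2$; one then extends it by $0$ on $\{r\ge R_2\}$ and compares it with $u$. The reduction rests on the elementary remark that if $w=w(r)$ is non-increasing then, by \eqref{cpt-supp-2} (a weak lower bound on $\Delta_f r$, which may be multiplied by $w'\le 0$ with reversal of the inequality),
\[
\Delta_f\bigl(w\circ r\bigr)=w''+\bigl(\Delta_f r\bigr)\,w'\le w''-C^2\,w'\qquad\text{weakly on }M\setminus\overline{B_{R_1}},
\]
so it suffices to produce a one-dimensional $w$ with $w''-C^2 w'\le\lambda(w)$ and the required boundary behaviour.

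For the one-dimensional construction I would first solve the first order problem $\tfrac12\Psi'=\lambda-C^2\sqrt{\Psi}$ with $\Psi(0)=0$, obtaining a $C^1$ function $\Psi\ge 0$ on a small interval $[0,a]$, and then let $w$ be determined by $w'=-\sqrt{\Psi(w)}$ with $w(R_2)=0$; this is the standard device for realizing the branch of $w''=\lambda(w)+C^2 w'$ that leaves $0$, and the non-uniqueness at the origin that makes $w\not\equiv 0$ possible is exactly what \eqref{cpt-supp-1} provides. The length of the construction interval is $R_2-R_1=\int_0^a\Psi(t)^{-1/2}\,dt$, so the whole point is that this integral is finite. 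Since $\Psi\le 2\Lambda$ (from $\Psi'\le 2\lambda$), one has $\Psi'\ge 2\lambda-2C^2\sqrt{2\Lambda}$, hence $\Psi(t)\ge 2\Lambda(t)-2\sqrt2\,C^2 t\sqrt{\Lambda(t)}$; and because \eqref{cpt-supp-1} forces $\Lambda(t)/t^2\to+\infty$ as $t\to 0^+$, choosing $a$ small enough gives $\Psi(t)\ge\Lambda(t)$ on $(0,a]$, whence $R_2-R_1\le\int_0^a\Lambda(t)^{-1/2}\,dt<\infty$ by \eqref{cpt-supp-1}. Since $w'(R_2^-)=-\sqrt{\Psi(0)}=0$, the function obtained by gluing $w$ with the zero function is $C^1$ and is a genuine semiclassical supersolution of $\Delta_f\,\cdot\,=\lambda(\cdot)$ across $r=R_2$, and identically $0$ beyond.

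With $w$ at hand the comparison is routine. Because $u(x)\to 0$ as $r(x)\to+\infty$, I can pick $R_1\ge R$ (so that $\{r\ge R_1\}\subset\Omega$) large enough that $\sup_{\partial B_{R_1}}u<a=w(R_1)$. Subtracting the semiclassical inequalities $\Delta_f u\ge\lambda(u)$ and $\Delta_f w\le\lambda(w)$ on $\{r>R_1\}$ and testing against $\varphi_\epsilon=(u-w-\epsilon)_+$ — which is Lipschitz and compactly supported in $\{r>R_1\}$, since $u\le w$ near $\partial B_{R_1}$ and $u-w\to0$ at infinity — one obtains
\[
\int_{\{u-w>\epsilon\}}|\nabla(u-w)|^2\,d\mathrm{vol}_f+\int_{\{u-w>\epsilon\}}\bigl(\lambda(u)-\lambda(w)\bigr)(u-w-\epsilon)\,d\mathrm{vol}_f\le 0 .
\]
Both integrands are non-negative, the second one because $\lambda$ is non-decreasing and $u>w$ on the domain of integration, so each vanishes; hence $u-w$ is locally constant on $\{u-w>\epsilon\}$ and, vanishing on the boundary of that open set, satisfies $u-w\le\epsilon$ on all of $\{r>R_1\}$. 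Letting $\epsilon\to 0$ gives $u\le w$ on $\{r\ge R_1\}$, and since $w\equiv 0$ on $\{r\ge R_2\}$ while $u\ge 0$, we conclude $u\equiv 0$ outside $\overline{B_{R_2}}$.

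I expect the main obstacle to be the one-dimensional construction — specifically, checking that the first order term $C^2 w'$ coming from the curvature bound \eqref{cpt-supp-2} does not destroy the finiteness of $R_2-R_1$. This is where \eqref{cpt-supp-1} must be used in sharp form (through its consequence $\Lambda(t)/t^2\to+\infty$), and it is essentially the only place where the argument differs from the drift-free case of \cite{puriser-jde}; the measure-theoretic handling of the semiclassical formulation and the use of the mere monotonicity of $\lambda$ in the comparison step are then standard.
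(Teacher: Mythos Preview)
Your proposal is correct and is precisely the adaptation the paper has in mind: the paper does not give its own proof but simply states that Theorem~1.1 in \cite{puriser-jde} ``may be adapted with minor changes,'' and what you have written is exactly that adaptation, with the drift absorbed via the weak lower bound \eqref{cpt-supp-2} on $\Delta_f r$ and the one-dimensional supersolution built from the modified energy equation $\tfrac12\Psi'=\lambda-C^2\sqrt{\Psi}$. Your observation that \eqref{cpt-supp-1} forces $\Lambda(t)/t^2\to+\infty$, and hence $\Psi\ge\Lambda$ near $0$, is the key step ensuring that the first-order term does not spoil the finiteness of $R_2-R_1$; the rest (weak handling of $r$ across the cut locus, the comparison argument with $(u-w-\epsilon)_+$) is standard and matches \cite{puriser-jde}.
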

As an immediate consequence, as noted in \cite{puriser-jde}, if \eqref{cpt-supp-1} holds,
$\Delta_f= \Delta$,  and $M$ is a Cartan-Hadamard manifold, then the compact support principle
holds for \eqref{ineq-cpt-supp}.

\subsection{Feller property on Ricci solitons} \label{subsection_Feller&Solitons}

In this section we are going to prove that Ricci solitons are Feller  with
respect to the weighted Laplacian $\Delta_{f}$ and with respect to the
ordinary Laplacian $\Delta$.

%

The fact that for every gradient Ricci soliton the weighted Laplacian
$\Delta_{f}$ is Feller is then a consequence of the results of the previous
section and of the following estimates for the gradient of
the potential function which have been obtained by Zhang,
\cite{Zhang-PAMS} (see also H.-D. Cao and D. Zhou, \cite{CaoZhou-JDG}). These
in turn depend on lower estimates for the scalar curvature of Ricci solitons
obtained by Zhang, and by a different method in \cite{PRimS-MathZ}. We
are going to briefly describe the latter approach.

We recall from \cite{PRimS-MathZ} the following
\textquotedblleft a-priori\textquotedblright\ estimate for weak solutions of
semi-linear elliptic inequalities under volume assumptions (see also
\cite{PRS-gafa}, Theorem B).

\begin{theorem}
\label{th_apriori}Let $\left(  M,\left\langle ,\right\rangle ,e^{-f}%
d\mathrm{vol}\right)  $ be a complete, weighted manifold. Let $a\left(
x\right)  ,$ $b\left(  x\right)  \in C^{0}\left(  M\right)  $, set
$a_{-}\left(  x\right)  =\max\left\{  -a\left(  x\right)  ,0\right\}  $ and
assume that
\[
\sup_{M}a_{-}\left(  x\right)  <+\infty
\]
and
\[
b\left(  x\right)  \geq\frac{1}{Q\left(  r\left(  x\right)  \right)  }\text{
on }M,
\]
for some positive, non-decreasing function $Q\left(  t\right)  $ such that
$Q\left(  t\right)  =o\left(  t^{2}\right)  $, as $t\rightarrow+\infty$.
Assume furthermore that, for some $H>0$,%
\[
\frac{a_{-}\left(  x\right)  }{b\left(  x\right)  }\leq H\text{, on }M.
\]
Let $u\in Lip_{loc}\left(  M\right)  $ be a non-negative solution of%
\begin{equation}
\Delta_{f}u\geq a\left(  x\right)  u+b\left(  x\right)  u^{\sigma}\text{,}
\label{a-priori3}%
\end{equation}
weakly on $\left(  M,e^{-f}d\mathrm{vol}\right)  $, with $\sigma>1$. If%
\begin{equation}
\liminf_{r\rightarrow+\infty}\frac{Q\left(  r\right)  \log\mathrm{vol}%
_{f}\left(  B_{r}\right)  }{r^{2}}<+\infty, \label{a-priori4}%
\end{equation}
then%
\[
u\left(  x\right)  \leq H^{\frac{1}{\sigma-1}}\text{, on }M.
\]

\end{theorem}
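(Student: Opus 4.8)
The plan is to argue by contradiction via a rescaling/blow-up reduction to a global problem to which the weak maximum principle (equivalently, an a priori pointwise bound derived from the volume growth) can be applied. Concretely, suppose $u\in Lip_{loc}(M)$ is a non-negative weak solution of $\Delta_f u\geq a(x)u+b(x)u^\sigma$ and suppose, for contradiction, that $u(x_0)>H^{1/(\sigma-1)}$ at some point $x_0$. First I would reduce to the case $a\equiv 0$ and $b=1/Q(r)$ by exploiting the hypotheses: since $a_-/b\le H$ on $M$, on the set where $u\ge H^{1/(\sigma-1)}$ we have $u^{\sigma-1}\ge H\ge a_-(x)/b(x)$, hence $a(x)u+b(x)u^\sigma\ge -a_-(x)u+b(x)u^\sigma\ge b(x)u^\sigma-b(x)u^{\sigma-1}\cdot u = 0$, which at least gives subharmonicity of $u$ on superlevel sets and, more usefully, lets me absorb the linear term.

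The key step is then an integral estimate of Caccioppoli type combined with the volume hypothesis \eqref{a-priori4}. The idea is to test the weak inequality \eqref{a-priori3} against $\phi = \psi^{2q}(u-H^{1/(\sigma-1)})_+^{p}$ for a cutoff $\psi$ supported in $B_{2r}$, equal to $1$ on $B_r$, with $|\nabla\psi|\le c/r$, and for suitably chosen exponents $p,q>0$. Integrating by parts against $d\mathrm{vol}_f$ and using Young's inequality to absorb gradient terms, one produces an inequality roughly of the form
\[
\int_{B_r} \frac{1}{Q(r)}\, w^{p+\sigma}\, d\mathrm{vol}_f \leq \frac{C}{r^2}\int_{B_{2r}} w^{p+1}\, d\mathrm{vol}_f,
\]
where $w=(u-H^{1/(\sigma-1)})_+$, using that $b\geq 1/Q(r(x))$ and $Q$ is non-decreasing to pull the weight out on $B_r$. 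Since $\sigma>1$, the left-hand exponent strictly exceeds the right-hand one, so by an iteration (Moser-type, or the simpler trick of letting $p\to\infty$ after taking $L^\infty$-type quotients) together with the growth condition $Q(r)\log\mathrm{vol}_f(B_r)=o(r^2)$ along a sequence $r_j\to\infty$, one forces $w\equiv 0$, i.e. $u\le H^{1/(\sigma-1)}$ — a contradiction with $u(x_0)>H^{1/(\sigma-1)}$.

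The main obstacle, I expect, is handling the term $\log\mathrm{vol}_f(B_r)$ correctly: the naive Caccioppoli estimate only gives a polynomial-in-$r$ loss, which is not enough to beat a merely $o(r^2/Q(r))$-growing volume. The standard remedy, following \cite{PRS-gafa} and \cite{PRimS-MathZ}, is to insert a logarithmic cutoff — replacing $\psi$ by a function that decays like $1 - \log(r(x)/r)/\log 2$ on an annulus of geometrically increasing width, or more precisely choosing the test radii in a Khas'minskii-type sequence — so that the error term involves $(\log\mathrm{vol}_f)/r^2$ rather than a power of $r$; then \eqref{a-priori4} is exactly what closes the argument along the chosen sequence. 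A secondary technical point is that $u$ is only $Lip_{loc}$ and the inequality only weak, so all manipulations must be done at the level of the weak formulation with Lipschitz test functions, and the "Calabi trick"/pointwise maximum principle reasoning is unavailable — everything must go through the integral estimate. Once the contradiction is reached, the conclusion $u\le H^{1/(\sigma-1)}$ on all of $M$ follows.
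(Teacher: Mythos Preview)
The paper does not supply its own proof of this theorem: it is quoted verbatim from \cite{PRimS-MathZ} (with a pointer to \cite{PRS-gafa}, Theorem~B) as a known a~priori estimate, so there is no argument in the present paper to compare against. Your outline is, however, essentially the method used in those references: test the weak inequality against $u^{p}\psi^{2q}$ (or a truncation thereof), integrate by parts in the $f$-divergence form, use Young's inequality to absorb the cross term, and exploit the strict gap $\sigma>1$ between exponents together with the volume hypothesis \eqref{a-priori4}.

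Two remarks on the sketch. First, the absorption of the linear term as you wrote it collapses to $0$ and yields only $\Delta_f u\ge 0$ on $\{u\ge H^{1/(\sigma-1)}\}$, which is not enough to drive the integral estimate. The standard fix is to work on the slightly higher superlevel set $\{u\ge (1+\varepsilon)^{1/(\sigma-1)}H^{1/(\sigma-1)}\}$, where $a_-(x)u\le \tfrac{1}{1+\varepsilon}\,b(x)u^{\sigma}$ and hence a positive fraction $\tfrac{\varepsilon}{1+\varepsilon}\,b(x)u^{\sigma}$ survives on the right; one then concludes $u\le (1+\varepsilon)^{1/(\sigma-1)}H^{1/(\sigma-1)}$ and lets $\varepsilon\downarrow 0$. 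Second, in the cited proofs the appearance of $\log\mathrm{vol}_f(B_r)$ in \eqref{a-priori4} is not obtained via a logarithmic cutoff but by sending the exponent $p\to+\infty$ in the Caccioppoli inequality: the volume term enters as $\mathrm{vol}_f(B_{2r})^{C/p}$, and choosing $p$ of order $r^{2}/Q(r)$ turns this into precisely the control demanded by \eqref{a-priori4}. Your ``logarithmic cutoff'' alternative can also be made to work, but it is not what the references actually do.
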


Using the volume estimate \eqref{qian_volestimate} we deduce the validity of
the next

\begin{corollary}
\label{cor_scalarcurvestimate}Let $\left(  M,\left\langle ,\right\rangle
,\nabla f\right)  $ be a complete Ricci soliton. Then the scalar curvature $S$
of $M$ satisfies the lower estimate
\begin{equation}
\label{scal_estimate}S(x)\geq%
\begin{cases}
0 & \text{if } \lambda\geq0\\
m \lambda & \text{if } \lambda< 0
\end{cases}
\end{equation}

\end{corollary}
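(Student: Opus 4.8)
The plan is to apply Theorem~\ref{th_apriori} to a suitable differential inequality satisfied by a shifted scalar curvature. Recall the classical soliton identities: on a gradient Ricci soliton $Ric_f = \lambda \langle,\rangle$ one has $S + \Delta f = m\lambda$ and $\nabla S = 2\,Ric(\nabla f, \cdot)$, from which Hamilton's identity $S + |\nabla f|^2 - 2\lambda f = \text{const}$ follows. Differentiating the trace identity and using the contracted second Bianchi identity yields the well-known elliptic equation for $S$, namely $\Delta_f S = \Delta S - \langle \nabla f, \nabla S\rangle = 2\lambda S - 2|Ric|^2$. Since $|Ric|^2 \geq S^2/m$ by Cauchy-Schwarz on the eigenvalues of the Ricci tensor, we obtain
\[
\Delta_f S \leq 2\lambda S - \frac{2}{m} S^2.
\]

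First I would treat the steady and expanding cases ($\lambda \leq 0$) together by setting $u = -S + m\lambda \geq 0$ if and only if $S \leq m\lambda$ --- so we argue by contradiction, assuming $\inf_M S < m\lambda$ when $\lambda < 0$, resp. $\inf_M S < 0$ when $\lambda = 0$, and work with $u = c - S$ for an appropriate constant $c$. A short computation from the displayed inequality shows that $u$ satisfies a coercive inequality of the form $\Delta_f u \geq a(x) u + b(x) u^{\sigma}$ with $\sigma = 2$: completing the square in the quadratic expression $2\lambda S - (2/m)S^2$ and re-expressing in terms of $u$ produces a positive coefficient $b \equiv 2/m$ in front of $u^2$ (this is where the favorable sign of the Cauchy-Schwarz term enters), while $a(x)$ is bounded below uniformly because $S$ is bounded above on any complete soliton (indeed $S \leq m\lambda$ is classical, or one simply uses that $a_-$ is controlled). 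Then $b(x) \geq 1/Q$ with $Q$ constant, hence trivially $Q(t) = o(t^2)$, and $a_-/b$ is bounded. The volume hypothesis \eqref{a-priori4} is verified directly from the Qian--Wei--Wylie estimate \eqref{qian_volestimate}: when $\lambda \leq 0$ one has $\mathrm{vol}_f(B_r) \leq A + B\int e^{-\lambda t^2 + Ct}\,dt$, so $\log \mathrm{vol}_f(B_r) = O(r^2)$ and $Q(r)\log\mathrm{vol}_f(B_r)/r^2$ stays bounded. Theorem~\ref{th_apriori} then forces $u \leq H^{1/(\sigma-1)} = H$ for the relevant $H$, which --- by choosing the contradiction hypothesis sharply and letting the gap tend to zero --- contradicts $\sup_M u > 0$ unless $S \geq m\lambda$ (resp. $S \geq 0$) everywhere. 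For $\lambda > 0$ (shrinking case), the inequality $\Delta_f S \leq 2\lambda S - (2/m)S^2$ already has the shape $\Delta_f(-S) \geq 2\lambda(-S) + (2/m)(-S)^2 \cdot(\cdots)$ but now the linear term has the wrong sign to absorb directly; instead one uses that the right-hand side $2\lambda S - (2/m)S^2$ is $\leq$ a negative constant whenever $S < -\delta$ for any $\delta>0$, giving $\Delta_f(-S) \geq$ positive constant on the superlevel set $\{-S > \delta\}$, and the weak maximum principle at infinity (valid by $f$-stochastic completeness of the soliton, as established right after \eqref{qian_volestimate}) rules this out --- alternatively, and more uniformly, one runs the same Theorem~\ref{th_apriori} argument since \eqref{a-priori4} still holds because for $\lambda>0$ the volume is even bounded, so $\log\mathrm{vol}_f(B_r)/r^2 \to 0$.

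The main obstacle I anticipate is organizing the contradiction argument so that Theorem~\ref{th_apriori}'s conclusion $u \leq H$ --- which gives only a bound, not that $u \equiv 0$ --- actually yields the sharp inequality. The standard device is to introduce a one-parameter family: assume $S(x_0) < m\lambda - \eta$ at some point (when $\lambda<0$), set $u = (m\lambda - \eta/2) - S$, check that $u$ is positive near $x_0$ hence $\sup u > 0$, verify $\Delta_f u \geq a u + b u^2$ with $a_-/b \leq H(\eta)$ where $H(\eta) \to 0$ as $\eta \to 0^+$, and conclude $u \leq H(\eta)$; choosing $\eta$ small enough that $H(\eta) < $ (the positive value of $u$ at $x_0$) gives the contradiction. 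Verifying that $H(\eta)$ can indeed be made small --- i.e. that $\sup_M a_-$ shrinks appropriately relative to $b$ as the gap closes --- is the one computation requiring care, and it hinges precisely on the quadratic term dominating, which is exactly what completing the square in $2\lambda S - (2/m)S^2$ delivers. The case $\lambda = 0$ is the cleanest since then $a$ can be taken identically $0$ and only the $u^2$ term survives, so $H = 0$ and Theorem~\ref{th_apriori} gives $u \leq 0$ outright, forcing $S \geq 0$.
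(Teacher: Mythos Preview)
Your proposal has a genuine gap: Theorem~\ref{th_apriori} applies only to \emph{non-negative} solutions $u$, and none of the shifted functions $u=c-S$ you introduce are known to be non-negative on $M$ --- indeed, where $S>c$ you have $u<0$. Your contradiction/parameter machinery (choosing $c=m\lambda-\eta/2$, shrinking $H(\eta)$, etc.) never confronts this, and without non-negativity the theorem simply does not apply. The repair is to replace $u$ by its positive part $u_+=\max\{u,0\}$ and check (via Kato's inequality) that $u_+$ is a non-negative weak solution of the same inequality; but once you do this, the entire parameter family and the contradiction argument evaporate as unnecessary.

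The paper's proof is exactly this streamlined version: one works directly with $S_-=\max\{-S,0\}$, which is automatically non-negative and is a weak solution of
\[
\Delta_f S_- \geq \lambda S_- + \tfrac{1}{m}S_-^2,
\]
then applies Theorem~\ref{th_apriori} \emph{once} with $a(x)\equiv\lambda$, $b(x)\equiv 1/m$, $\sigma=2$. The volume hypothesis \eqref{a-priori4} follows from \eqref{qian_volestimate} as you noted. Since $a_-=\lambda_-$ and $H=a_-/b=m\lambda_-$, the conclusion $S_-\leq m\lambda_-$ is immediate and covers all three soliton types simultaneously: for $\lambda\geq 0$ one gets $S_-\leq 0$ hence $S\geq 0$, and for $\lambda<0$ one gets $S\geq -m\lambda_-=m\lambda$. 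No case split, no contradiction, no $\eta$. (A minor point: the paper uses the normalization $\Delta_f S=\lambda S-|Ric|^2$ rather than your $2\lambda S-2|Ric|^2$; this is harmless but you should match conventions.)
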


\begin{proof}
Indeed,
\[
\Delta_{f}S=\lambda S-\left\vert Ric\right\vert ^{2}.
\]
and since, by the Cauchy-Schwarz inequality $|\mathrm{Ric}|^{2}\geq\displaystyle\frac{1}%
{m}S^{2}$, we have that $S_{-}(x)=\min\{0,-S(x)\}$ is a weak solution of the differential
inequality
\[
\Delta_{f}S_{-}\geq\lambda S_{-}+\frac{1}{m}S_{-}^{2}.
\]
On the other hand, by \eqref{qian_volestimate}, the condition \eqref{a-priori4}
above is satisfied, and an application of Theorem~\ref{th_apriori} with
$a(x)=\lambda$ and $b(x)=1/m$ shows that
\[
S_{-}\leq m\lambda_{-}
\]
and the conclusion follows.
\end{proof}

Using these estimates into the basic equation
\begin{equation}\label{soliton-basic-eq}
S + |\nabla f|^{2} -2\lambda f= C
\end{equation}
and integrating along minimizing geodesics one obtains the estimates for
the potential function and its gradient described in \cite{Zhang-PAMS} and \cite{CaoZhou-JDG}:

\begin{lemma}
\label{f_estimates} Let $\left(  M,\left\langle ,\right\rangle ,\nabla
f\right)  $ be a complete Ricci soliton. Then there exist positive constants $a$ and
$b$ depending only on the soliton such that
\begin{equation}
\label{f-estimate1}|\nabla f|\leq b + |\lambda| d(x,o) \, \text{ and }\,
|f(x)|\leq a + bd(x,o) +\frac{|\lambda|}2 d(x,o)^2.
\end{equation}
\end{lemma}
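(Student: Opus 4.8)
The strategy is to combine the scalar curvature lower bound from Corollary~\ref{cor_scalarcurvestimate} with the soliton structure equation \eqref{soliton-basic-eq} and then integrate along minimizing geodesics. First I would rewrite \eqref{soliton-basic-eq} as $|\nabla f|^2 = C - S + 2\lambda f$. Since $S$ is bounded below (by $0$ if $\lambda\geq 0$, by $m\lambda$ if $\lambda<0$), in either case we get $|\nabla f|^2 \leq C' + 2\lambda f$ for a suitable constant $C'$ (absorbing the lower bound on $-S$). The idea now is a standard Gronwall/ODE comparison: along a unit-speed minimizing geodesic $\gamma$ from $o$ to $x$, set $\varphi(t) = f(\gamma(t))$; then $|\varphi'(t)| \leq |\nabla f|(\gamma(t)) \leq \sqrt{C' + 2\lambda\varphi(t)}$ wherever the right-hand side makes sense.

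\textbf{The case $\lambda \leq 0$.} Here the term $2\lambda f$ works in our favor when $f$ is large: if $f(\gamma(t))$ were to become very large and positive, $C' + 2\lambda f$ would become negative, which is impossible, so $f$ is bounded above along geodesics by a constant; combined with $|\varphi'| \leq \sqrt{C' + 2\lambda\varphi} \leq \sqrt{C'}$ (using $\lambda\varphi \leq$ the bound) one also controls $f$ from below linearly. More carefully: from $|\varphi'|^2 \leq C' + 2\lambda\varphi$ one deduces, when $\lambda<0$, that $\varphi$ cannot decrease faster than the solution of $\psi' = -\sqrt{C'+2\lambda\psi}$, which grows at most linearly; one gets $|f(x)| \leq a + b\, d(x,o)$, even better than claimed. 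Then $|\nabla f| \leq \sqrt{C' + 2\lambda f} \leq \sqrt{C' + 2|\lambda|(a+b\,d(x,o))}$, which one bounds by $b' + |\lambda|\,d(x,o)$ for adjusted constants using $\sqrt{u+v}\leq \sqrt u + \sqrt v$ and $\sqrt{d}\leq 1+d$.

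\textbf{The case $\lambda > 0$.} Now $S \geq 0$ gives $|\nabla f|^2 \leq C + 2\lambda f$, hence $f$ is bounded below (since the left side is $\geq 0$, we get $f \geq -C/(2\lambda)$). Setting $w = \sqrt{C + 2\lambda f} \geq 0$, we have $w$ locally Lipschitz and, along a unit-speed geodesic, $|w'| = \frac{\lambda |\varphi'|}{\sqrt{C+2\lambda\varphi}} = \frac{\lambda|\nabla f|(\gamma(t))}{w} \leq \lambda$ wherever $w>0$ (and $w$ stays controlled through the zero set by continuity). Integrating from $0$ to $d(x,o)$ gives $w(x) \leq w(o) + \lambda\, d(x,o)$, i.e. $|\nabla f|(x) \leq \sqrt{C+2\lambda f}(x) \leq w(o) + |\lambda|\,d(x,o) =: b + |\lambda|\,d(x,o)$, which is the first estimate. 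Feeding this back into $|\varphi'| = |\nabla f|(\gamma(t)) \leq b + \lambda t$ and integrating once more yields $|f(x)| \leq |f(o)| + b\,d(x,o) + \frac{|\lambda|}{2} d(x,o)^2 =: a + b\,d(x,o) + \frac{|\lambda|}{2}d(x,o)^2$, as claimed.

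\textbf{Main obstacle.} The only delicate point is the regularity of $w = \sqrt{C+2\lambda f}$ (or the ODE comparison for $\varphi$) at points where the radicand vanishes, together with the fact that $t\mapsto f(\gamma(t))$ is only guaranteed to be differentiable where $r$ is smooth — but along a fixed minimizing geodesic this is not an issue since $f$ is smooth on $M$ and $\gamma$ is a smooth curve, so $\varphi = f\circ\gamma \in C^\infty([0,d(x,o)])$ and the chain rule applies directly; the square-root function is Lipschitz away from $0$ and the estimate $|w'|\leq\lambda$ extends across the zero set of $w$ by a routine approximation (replace $C$ by $C+\epsilon$, integrate, let $\epsilon\to 0$). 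Everything else is elementary integration, and the steady case $\lambda=0$ is immediate from $|\nabla f|^2 = C - S \leq C$.
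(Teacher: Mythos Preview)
Your approach is exactly what the paper sketches: feed the scalar curvature lower bound of Corollary~\ref{cor_scalarcurvestimate} into the basic equation \eqref{soliton-basic-eq} and integrate along a minimizing geodesic; the paper simply refers to \cite{Zhang-PAMS} and \cite{CaoZhou-JDG} for the details you are supplying. Your treatment of the shrinking case ($\lambda>0$) via $w=\sqrt{C+2\lambda f}$ and $|w'|\le\lambda$ is clean and correct, and the steady case is immediate as you say.

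The expanding case ($\lambda<0$), however, contains a slip. From $|\nabla f|^2\le C'+2\lambda f$ with $\lambda<0$ you correctly deduce $f\le C'/(2|\lambda|)$, but the step $\sqrt{C'+2\lambda\varphi}\le\sqrt{C'}$ requires $\lambda\varphi\le0$, i.e.\ $\varphi\ge0$, which fails in general (on the Gaussian expander $f(x)=\tfrac{\lambda}{2}|x|^2\to-\infty$). Likewise, the comparison ODE $\psi'=-\sqrt{C'+2\lambda\psi}$ has solution $\psi(t)=\bigl[C'-(|\lambda|t+\text{const})^2\bigr]/(2|\lambda|)$, which decays \emph{quadratically}, not linearly; so the claim $|f|\le a+b\,d(x,o)$ is too strong and is in fact false for the Gaussian expander. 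The fix is simply to run your $\lambda>0$ argument verbatim: $w=\sqrt{C'+2\lambda f}$ is still well-defined (the radicand dominates $|\nabla f|^2\ge0$), and $|w'|=|\lambda|\,|\varphi'|/w\le|\lambda|$ gives $|\nabla f|\le w\le w(o)+|\lambda|\,d(x,o)$; one more integration then yields the stated quadratic bound on $|f|$.
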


Using these results we deduce the following

\begin{proposition}
\label{prop_FellerSoliton1} Let $\left(  M,\left\langle ,\right\rangle ,\nabla
f\right)  $ be a complete Ricci soliton. Then $\Delta_{f}$ and $\Delta$ are
both stochastically complete and Feller.
\end{proposition}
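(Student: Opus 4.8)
The plan is to combine Lemma~\ref{f_estimates}, which provides the growth estimates on $f$ and its gradient, with the comparison machinery of Theorems~\ref{th-comparison1}, \ref{qian-comparison} and \ref{qian-feller}. Stochastic completeness has essentially already been recorded: on a complete Ricci soliton one has $\mathrm{Ric}_f=\lambda\langle,\rangle\geq -k^2$ with $k^2=\max\{0,-\lambda\}$, so \eqref{stoch-completeness1} holds and Theorem~\ref{stoch_completeness} gives that $\Delta_f$ is stochastically complete. (Alternatively, this follows directly from the Qian--Wei--Wylie volume estimate \eqref{qian_volestimate} and the volume test $R/\log\mathrm{vol}_f(B_R)\notin L^1(+\infty)$.) Stochastic completeness of the ordinary Laplacian $\Delta$ is classical under $\mathrm{Ric}\geq -k^2(r)$; on a soliton $\mathrm{Ric}=\lambda\langle,\rangle-\mathrm{Hess}\,f$, and the Hessian of $f$ is controlled by standard Shi-type/soliton estimates, or one simply invokes the known volume growth of solitons (at most quadratic exponential) to conclude $\Delta$-stochastic completeness via the same volume test.

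For the Feller property of $\Delta_f$ the route is Theorem~\ref{qian-feller}: it suffices to verify the hypotheses of Theorem~\ref{qian-comparison}~(iii), namely $\mathrm{Ric}_f\geq -k$ and $|\nabla f|\leq C(d(x,o)+1)$. The first is immediate from $\mathrm{Ric}_f=\lambda\langle,\rangle$ (take $k=\max\{0,-m\lambda\}$ or just $k=|\lambda|$ suitably scaled). The second is precisely the first inequality in \eqref{f-estimate1} of Lemma~\ref{f_estimates}, with $C=\max\{b,|\lambda|\}$. Hence Theorem~\ref{qian-feller} applies verbatim and $\Delta_f$ is Feller.

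The remaining, and slightly more delicate, point is the Feller property of the \emph{ordinary} Laplacian $\Delta$ on a soliton. Here I would use the relation $\Delta = \Delta_f + \langle\nabla f,\nabla\cdot\rangle$, so that for the radial distance function $\Delta r = \Delta_f r + \langle\nabla f,\nabla r\rangle \leq \Delta_f r + |\nabla f|$, and the lower bound $\Delta r \geq \Delta_f r - |\nabla f|$. Using the upper estimate for $\Delta_f r$ from Theorem~\ref{qian-comparison}~(iii) together with the linear bound on $|\nabla f|$ from Lemma~\ref{f_estimates}, one gets
\[
\Delta\rho(x)\leq \frac{m-1}{\rho}+\tfrac13(k+2C)\rho + C(1+d(o,p)) + b + |\lambda|\,d(x,o),
\]
which is again of the form ``$\le$ (constant)$\cdot\rho$ + (distance-dependent constant)''. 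This is exactly the structural hypothesis under which the Hsu--Qian hitting-time argument behind Theorem~\ref{qian-feller} yields the Feller property, so one applies the drift-free ($X=0$) case of that theorem — or, more cleanly, notes that $\Delta$ is itself a weighted Laplacian for the trivial weight and that the estimate on $\Delta\rho$ verifies the hypothesis of Theorem~\ref{qian-feller} applied with $f\equiv 0$ to this distance bound. The main obstacle is making sure the constant $C(1+d(o,p))$ depends on $p$ only through $d(o,p)$ (not worse), so that the quantitative hitting-time estimate still closes; this is guaranteed because every term above is either $p$-independent or linear in $d(o,p)$ via the soliton estimates, exactly as in Theorem~\ref{qian-comparison}~(iii). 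Collecting these observations gives the Proposition for both $\Delta_f$ and $\Delta$.
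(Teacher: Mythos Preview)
Your proposal is essentially correct and follows the paper's route: verify the hypotheses of Theorem~\ref{qian-comparison}(iii) using $\mathrm{Ric}_f=\lambda\langle,\rangle$ together with the linear bound on $|\nabla f|$ from Lemma~\ref{f_estimates}, apply Theorem~\ref{qian-feller} for the Feller property of $\Delta_f$, and then pass from $\Delta_f\rho$ to $\Delta\rho=\Delta_f\rho+\langle\nabla f,\nabla\rho\rangle$ to treat $\Delta$.

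Two points deserve tightening. First, in your $\Delta\rho$ estimate you leave a raw $|\lambda|\,d(x,o)$ term; to put this in the form required by Theorem~\ref{qian-feller} you must use the triangle inequality $d(x,o)\leq d(o,p)+\rho(x)$, which the paper does explicitly, yielding $\Delta\rho\leq (m-1)/\rho + C_1'\rho + C_2'(1+d(o,p))$. Second, your treatment of stochastic completeness for the \emph{ordinary} Laplacian is the weakest part: ``Shi-type/soliton estimates'' for $\mathrm{Hess}\,f$ are not available here (there is no flow and no a priori curvature bound), so controlling $\mathrm{Ric}$ directly does not work. Your volume alternative does go through (combine the $|f|$ estimate in \eqref{f-estimate1} with \eqref{qian_volestimate} to get $\mathrm{vol}(B_R)\leq e^{CR^2}$), but the paper's path is cleaner: once the $\Delta\rho$ estimate above is in hand, setting $p=o$ gives an upper bound on $\Delta r$ of exactly the shape handled by Theorem~\ref{th-comparison1}(i) (cf.\ the proof of Theorem~\ref{stoch_completeness}), so stochastic completeness of $\Delta$ falls out of the same inequality used for Feller, without a separate argument.
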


\begin{proof}
Since $Ric_{f}=Ric+\mathop{Hess} f= \lambda g$ by definition, and $$|\nabla
f|\leq C_{1}+|\lambda|d(x,o)$$ by the lemma above, the conditions of
Theorem~\ref{qian-feller} are satisfied and the statement concerning the $f$-Laplacian
$\Delta_{f}$ follows.

In fact, as mentioned above, the above conditions imply that, if $p\in M$ and
$\rho(x)=d(x,p)$ then $\Delta_{f} \rho$ satisfies the estimate
\[
\Delta_{f} \rho\leq\frac{m-1} \rho+ C_{1} \rho+ C_{2}(1+d(o,p)).
\]
On the other hand
\[
|\langle\nabla f, \nabla\rho\rangle| \leq|\nabla f|\leq b + |\lambda|
d(o,x))\leq b + |\lambda|(d(+,p) + \rho(x))
\]
so that
\[
\Delta\rho= \Delta_{f} \rho+\langle\nabla f,\nabla\rho\rangle\leq\frac{m-1}
\rho+ [C_{1} +|\lambda|]\rho+ (C_{2}+b +|\lambda|) (1+d(o,p))
\]
and, again by Theorem~\ref{qian-feller} the Laplacian $\Delta$ is Feller.
\end{proof}

The estimates for the potential function described above allow
us to prove Theorem~\ref{th_fullomoriyau}.

\begin{proof}[Proof of Theorem~\ref{th_fullomoriyau}]
Both statements follow applying Theorem~\ref{generalymoriyau} with the
choice $\gamma(x) = r(x)^2$ and using the estimates for $\nabla f$ described
above together with Qian's estimates for $\Delta_f r$ and $\Delta r$ as in the proof of
Proposition~\ref{prop_FellerSoliton1}.
\end{proof}

\subsection{Applications of the Feller property to geometry and PDE's}

As alluded to at the beginning of Section \ref{section_Feller}, and formalized
in Theorem \ref{th_f(u)}, using the Feller property on a
stochastically complete manifold enables one to extend the investigation
of qualitative properties of solution of PDE's to the case where these are
defined only in a neighborhood at infinity. In this section, we will exemplify the use of
this viewpoint in a
number of different geometric and analytic settings. We stress that the needed stochastic completeness
assumptions
are enjoyed by a very rich family of examples. For instance,  according to
Proposition \ref{prop_FellerSoliton1}, a natural and important framework is
represented by Ricci solitons.
In the non-weighted setting,
we have the class of complete manifolds such that $Ric \geq -G(r)$,
where $G(r)>0$ is a smooth increasing function satisfying $1/\sqrt G \not\in L^1(+\infty)$; \cite{Hsu-AnnProb}. Another
admissible category for the application of Theorem \ref{th_f(u)} is given by Cartan-Hadamard manifolds
with at most quadratic exponential volume growth. Actually, the (radial) sectional
curvature assumption can be considerably relaxed as explained in Theorem 5.9
of \cite{ps-feller}.

\subsubsection{Isometric immersions}

Recall that if a Riemannian manifold $\left(  M,\left\langle ,\right\rangle
\right)  $ is stochastically complete, then the mean curvature $\mathbf{H}$ of
a bounded isometric immersion $f:M\rightarrow\mathbb{B}_{R}\left(  0\right)
\subset\mathbb{R}^{n}$ must satisfy%
\[
\sup_{M}\left\vert \mathbf{H}\right\vert R\geq1.
\]
In particular, a stochastically complete minimal submanifold in Euclidean
space is necessarily unbounded.
 The next result show that this can be extended to the case where we have
 an isometric immersion  of an end $E$ of $M$ with respect to a given compact
 subset $K$ of $M$. We observe that the concepts of stochastic completeness
 and Feller property can be localized  on one end $E$ simply requiring that
 its double $\mathcal{D}(E)$ be stochastically complete or Feller, respectively.
 It is then easy to verify that the maximum principle at infinity holds for every function $f\colon E\to\mathbb{R}$ which is bounded above and does not attain its supremum on $\partial E$. It is easily verified that $M$ is stochastically complete if and only if so are all ends.

 Similarly, the end $E$ is Feller if and only if one (and therefore all) relatively compact domain with smooth boundary $\Omega \supset K$, the minimal positive solution of the boundary value problem
 $$\left\{\begin{array}{llll}\Delta h & =& \lambda\,h & in\,\, M\setminus \overline{\Omega}\\
h&=&1 & on \,\, \partial \Omega
 \end{array}\right.$$ tends to zero as $x\to \infty$ within the given end. Again, it is easy to see that an end $E$ satisfies this condition if and only if its double is Feller, (see \cite{ps-feller} Section 7  for details).

 We are now ready to state our result.  We are grateful to R. Haslhofer and T. Ilmanen for helpful comments related to the formulation of the theorem.

\begin{theorem}
Let $\left(  M,\left\langle ,\right\rangle \right)  $ be a Riemannian manifold and let
$E$ be an end of $M$ with respect to a compact set $K$. Assume that $E$ is stochastically
complete and Feller, and that there exists a bounded isometric immersion $
f:M\rightarrow\mathbb{B}_{R}\left(  0\right)
\subset\mathbb{R}^{n}$. Then
then mean curvature of $f$ satisfies%
\[
\sup_{E}\left\vert \mathbf{H}\right\vert R\geq 1.
\]
\end{theorem}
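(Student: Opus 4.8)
The plan is to reduce the statement to an application of Theorem~\ref{th_f(u)} (with $f$ the trivial weight, so $\Delta_f=\Delta$), localized on the end $E$. The starting point is the classical computation: if $f\colon M\to\mathbb{R}^n$ is an isometric immersion and $w$ is a smooth function on $\mathbb{R}^n$, then for the composition $w\circ f$ one has $\Delta(w\circ f)=\sum_i \mathrm{Hess}\,w(e_i,e_i)+\langle \nabla w,\mathbf{H}\rangle$ along $M$. Applying this to $w(y)=|y|^2$, which has $\mathrm{Hess}\,w=2\langle\,,\rangle$, gives $\Delta|f|^2 = 2m + 2\langle f,\mathbf{H}\rangle \geq 2m - 2R\sup_E|\mathbf{H}|$ on all of $M$, where we used $|f|<R$ (the immersion lands in $\mathbb{B}_R(0)$) together with Cauchy--Schwarz. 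Set $H^\ast=\sup_E|\mathbf{H}|$; we want to show $H^\ast R\geq 1$.

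Argue by contradiction: suppose $H^\ast R<1$, so that $c:=2m-2RH^\ast>0$ and $\Delta|f|^2\geq c>0$ on $E$. The function $u=R^2-|f|^2$ is then bounded (between $0$ and $R^2$), and satisfies $\Delta u=-\Delta|f|^2\leq -c<0$ on $E$. This is already in the right shape to contradict the weak maximum principle at infinity on the end. Concretely, because $E$ is stochastically complete in the localized sense described in the excerpt, the weak maximum principle at infinity holds for functions on $E$ that are bounded above and do not attain their supremum on $\partial E$; but one must be slightly careful because $u$ could conceivably attain its sup on $\partial E$. The clean way around this is to run the Feller argument instead: since $E$ is stochastically complete and Feller, Theorem~\ref{th_f(u)} applies to $\Delta u\geq\Lambda(u)$ outside a compact set once we exhibit a suitable $\Lambda$. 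If $u$ were everywhere positive on $E$ (after possibly shrinking to a smaller end so that $\partial E$ sits where $|f|<R$, i.e. $u>0$ there), then $\Delta u\geq c>0 \geq \Lambda(u)$ for, say, $\Lambda(t)=\min\{ct/(2\sup u),\,c/2\}$ on $(0,2\sup u)$, extended so as to satisfy conditions (a)--(c) with $\xi=1$; Theorem~\ref{th_f(u)} then forces $u(x)\to 0$ as $x\to\infty$ in $E$. But $u(x)\to 0$ means $|f(x)|\to R$, and then $\Delta|f|^2\geq c>0$ on an exterior domain with $|f|^2\to R^2$ from below again contradicts the weak maximum principle at infinity on $E$ (now $u$ does not attain its sup $R^2$, since the sup is a limit at infinity, so this time there is no boundary subtlety). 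This yields $H^\ast R\geq 1$.

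The main obstacle I anticipate is the boundary bookkeeping on the end $E$: making sure that in passing from $M$ to $E$ one can arrange $\partial E$ (equivalently, the compact set $K$) to lie in the region where $u>0$, and carefully invoking the localized versions of stochastic completeness / Feller exactly as set up in the paragraph preceding the theorem (via the double $\mathcal D(E)$ and the minimal solution of the exterior Dirichlet problem). The analytic input --- the Hessian comparison for $|f\circ\cdot|^2$ and the resulting inequality $\Delta|f|^2\geq 2m-2RH^\ast$ --- is routine; the only real content is organizing the contradiction so that the weak maximum principle at infinity (or Theorem~\ref{th_f(u)}) applies with all hypotheses genuinely met. A secondary, purely cosmetic point: the theorem as stated hypothesizes a bounded immersion of all of $M$, but only the geometry of $E$ is used, so one should note that $E$ being stochastically complete and Feller is what drives everything.
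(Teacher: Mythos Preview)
Your proposal contains a sign error that breaks the argument. You correctly compute that $u=R^{2}-|f|^{2}$ satisfies $\Delta u\leq -c<0$ on $E$, but then, when you try to feed $u$ into Theorem~\ref{th_f(u)}, you write ``$\Delta u\geq c>0\geq\Lambda(u)$''. This is inconsistent with what you just established: $\Delta u$ is strictly negative, so there is no nonnegative $\Lambda$ with $\Delta u\geq\Lambda(u)$, and Theorem~\ref{th_f(u)} does not apply to $u$. Consequently your intermediate conclusion $u(x)\to 0$, i.e.\ $|f(x)|\to R$, is unsupported (and in fact false: the opposite limit $|f(x)|\to 0$ is what actually follows). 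Your step (b), which would use the weak maximum principle once $|f|^{2}\to R^{2}$, is then moot.

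The paper's argument goes the other way. It applies Theorem~\ref{th_f(u)} directly to $v=|f|^{2}$, which is the function satisfying the \emph{correct} inequality $\Delta v\geq c>0$; since $0<v<R^{2}$ one has $\Delta v\geq (c/R^{2})\,v$, and Theorem~\ref{th_f(u)} (applied on $\mathcal D(E)$) yields $v(x)\to 0$, i.e.\ $f(x)\to 0$ along $E$. The contradiction is then obtained by a center-shifting trick: because $\sup_E|\mathbf H|R<1$ is strict, one may enlarge $R$ slightly to $R'$ and translate the ball to a new center $0'\neq 0$ with $f(E)\subset\mathbb B_{R'}(0')$; repeating the argument gives $f(x)\to 0'$, which is incompatible with $f(x)\to 0$. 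If you want to avoid the shift, a cleaner variant is to apply Theorem~\ref{thm_stoc+feller} to $w=|f|^{2}+1$: then $w>0$ is bounded and $\Delta w\geq c\geq \frac{c}{R^{2}+1}\,w$, forcing $w\to 0$, which contradicts $w\geq 1$. Either way, the key point you missed is that the Feller machinery is applied to $|f|^{2}$ (where the Laplacian inequality has the right sign), not to $R^{2}-|f|^{2}$.
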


\begin{proof}
Assume by contradiction that
\begin{equation}
\label{meancurvineq}
\sup_{E} \left\vert \mathbf{H}\right\vert R<1.
\end{equation}
Straightforward computations show that $u=\left\vert f\right\vert ^{2}\geq0$
satisfies
\[
\Delta u\geq c,\text{ on }E,
\]
where we have set%
\[
c=2m\left(  1-\sup_{E}\left\vert \mathbf{H}\right\vert
R\right)  >0,
\]
and $m=dim M$.
If follows that%
\[
\Delta u\geq\lambda u
\]
on $E$ with%
\[
\lambda=\frac{c}{R}\cdot
\]
Now, let $\Omega$ be a bi-collared  relatively compact neighborhood
of $\partial E$ in $\mathcal{D}(E)$. We use the same letters $f$ and $u$ to
denote the obvious extensions of $f$ and $u$ to $\mathcal{D}$. We clearly have
$f(\mathcal{D}(E)\setminus\Omega)\subset \mathbb{B}_{R}(0)$ and
$$
 \begin{array}{lllll}\Delta u &\geq & \lambda\, u & on& \mathcal{D}(E)\setminus \Omega
\end{array}
$$
An application of Theorem \ref{th_f(u)} shows that
$u(x)\to 0$, this is, $f(x)\to 0$ as $x\to +\infty$ in $M$. On the other hand,
since the strict inequality holds in \eqref{meancurvineq},
for $R'>R$ sufficiently close to $R$ we have $\sup_{E}
\left\vert \mathbf{H}\right\vert R'<1$, and
clearly  $f(E)\subset \mathbb{B}_{R'}(0')$ provided $|0'-0|<R'-R$.
Thus we can repeat the argument with $u'(x) = |f(x)-0'|^2$ for which again we have
\[
\Delta u' \geq c
\]
with the same value  $c$, and  then $u'(x)\to 0$, i.e., $f(x)\to 0'\ne 0$,
as $x\to \infty$. This yields the required contradiction and the theorem is proved.
\end{proof}

\subsubsection{Conformal deformations}

Given a Riemannian manifold $\left(  M,\left\langle ,\right\rangle \right)  $
of dimension $m\geq3$ consider the conformally related metric $\overline
{\left\langle ,\right\rangle }=v^{\frac{4}{m-2}}\left\langle ,\right\rangle $
\ where $v>0$ is a smooth function. Thus, the conformality factor $v$ obeys
the Yamabe equation%
\[
c_{m}^{-1}\Delta v-Sv=-\overline{S}v^{\frac{m+2}{m-2}},
\]
where $S$ and $\overline{S}$ denote the scalar curvatures of $\left\langle
,\right\rangle $ and $\overline{\left\langle ,\right\rangle }$, respectively.
Assume that $M$ is stochastically complete and that%
\[
\sup_{M}S\left(  x\right)  \leq S^{\ast},\text{ \thinspace}\inf_{M}%
\overline{S}\left(  x\right)  \geq\overline{S}_{\ast},
\]
for some constants $S^{\ast}\geq0$ and $\overline{S}_{\ast}> 0$. An
application of the weak minimum principle at infinity to the Yamabe equation
\ shows that%
\[
\left(  \frac{S^{\ast}}{\overline{S}_{\ast}}\right)  ^{\frac{m-2}{4}}\geq
v_{\ast}=\inf_{M}v.
\]
In particular, if $S\left(  x\right)  \leq0$ on $M$, then $v_{\ast}=0$.
Actually, since the infimum of $v$ cannot be attained,%
\[
\inf_{M\backslash\Omega}v=0,
\]
for every $\Omega\subset\subset M$. Clearly, to reach these conclusions the
scalar curvature bound must hold on $M$.

\begin{theorem}
Let $\left(  M,\left\langle ,\right\rangle \right)  $ be a stochastically
complete, Feller manifold of dimension $m\geq6$ and scalar curvature
satisfying%
\[
\sup_{M\backslash\Omega}S\left(  x\right)  \leq0,
\]
for some compact domain $\Omega\subset M$. Let $\overline{\left\langle
,\right\rangle }=v^{\frac{4}{m-2}}\left\langle ,\right\rangle $ be a
conformally related metric such that%
\[
\inf_{M}v=v_{\ast}>0.
\]
If the scalar curvature of $(M,\overline{\left\langle ,\right\rangle })$
satisfies%
\[
\inf_{M\backslash\Omega}\overline{S}\left(  x\right)  =\overline{S}_{\ast}>0,
\]
then%
\[
v\left(  x\right)  \rightarrow+\infty\text{, as }x\rightarrow\infty.
\]

\end{theorem}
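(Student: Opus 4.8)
The plan is to rewrite the Yamabe equation as a differential inequality of the form covered by Theorem~\ref{th_f(u)} (with $\Delta_f=\Delta$) applied to the function $w=1/v$, which should tend to zero at infinity precisely when $v$ blows up. First I would set $w=v^{-1}>0$; since $v\geq v_*>0$, $w$ is bounded above by $1/v_*$. A direct computation gives
\[
\Delta w = -v^{-2}\Delta v + 2 v^{-3}|\nabla v|^2 \geq -v^{-2}\Delta v,
\]
and substituting the Yamabe equation $c_m^{-1}\Delta v = Sv-\overline S v^{\frac{m+2}{m-2}}$ yields, on $M\setminus\Omega$ where $S\leq 0$ and $\overline S\geq \overline S_*>0$,
\[
\Delta w \geq -c_m v^{-2}\bigl(Sv-\overline S v^{\frac{m+2}{m-2}}\bigr)
= c_m\bigl(-S v^{-1} + \overline S v^{\frac{6-m}{m-2}}\bigr)
\geq c_m \overline S_*\, v^{\frac{6-m}{m-2}} = c_m\overline S_* \, w^{\frac{m-6}{m-2}}.
\]
This is exactly $\Delta w \geq \Lambda(w)$ with $\Lambda(t)= c_m\overline S_* t^{\xi}$ where $\xi=\frac{m-6}{m-2}$. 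For $m\geq 6$ one has $0\leq \xi<1$, so $\Lambda$ is continuous, non-decreasing, vanishes at $0$, is positive for $t>0$, and satisfies condition (c) of Theorem~\ref{th_f(u)}; the hypothesis $m\geq 6$ in the statement is precisely what makes $\xi\geq 0$.

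Next I would invoke Theorem~\ref{th_f(u)} (in the unweighted case $f\equiv$ const, so $\Delta_f=\Delta$), using the standing assumption that $M$ is stochastically complete and Feller: the bounded positive solution $w$ of $\Delta w\geq \Lambda(w)$ on $M\setminus\Omega$ satisfies $\lim_{x\to\infty}w(x)=0$. Translating back, $v(x)=1/w(x)\to+\infty$ as $x\to\infty$, which is the desired conclusion.

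The only genuinely delicate point is making sure the regularity is adequate to run the weak/strong maximum principle machinery: one should note that $v$ is a positive smooth (or at least $C^2$, by elliptic regularity for the Yamabe equation) solution, hence $w=1/v$ is $C^2$ on $M\setminus\Omega$ and the computation above is classical, so no distributional subtleties arise. A minor bookkeeping issue is that Theorem~\ref{th_f(u)} is stated for $u$ defined on $M\setminus\Omega$ with $\Omega\subset\subset M$; here one takes the same compact $\Omega$ on which $S\leq 0$ and $\overline S\geq\overline S_*$ are assumed, possibly enlarging it slightly to have smooth boundary, which does not affect the behavior at infinity. I expect the main (very mild) obstacle to be simply checking that the exponent $\xi=\frac{m-6}{m-2}$ lands in $[0,1)$ and that the $|\nabla v|^2$ term has the favorable sign — both of which are immediate — so that the result is essentially a direct application of Theorem~\ref{th_f(u)}.
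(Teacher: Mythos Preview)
Your proposal is correct and follows essentially the same route as the paper: set $u=v^{-1}$, use the Yamabe equation together with $S\leq 0$ and $\overline S\geq \overline S_*>0$ on $M\setminus\Omega$ to obtain $\Delta u\geq c_m\overline S_*\,u^{(m-6)/(m-2)}$, and then apply Theorem~\ref{th_f(u)} with $\xi=(m-6)/(m-2)\in[0,1)$. The paper's proof is identical in substance, merely more terse about the gradient term you explicitly dropped.
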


With respect to the assumption that $S$ is nonnegative at infinity, one may wonder if
it could be made nonnegative everywhere on $M$ with a conformal change of metric. Note however
this in general would require a control on the positive part $S^{+}$ of $S$ in the set $\Omega$, which moreover may depend on the metric itself in a rather implicit way (see,
e.g., Prop 1.2 in \cite{RaRiVe-MathZ}).

\begin{proof}
Just note that the positive, bounded function
$u\left(  x\right)  =v\left(  x\right)  ^{-1}$ satisfies%
\[
c_{m}^{-1}\Delta u\geq-Su+\overline{S}u^{\frac{m-6}{m-2}}\geq\overline
{S}u^{\frac{m-6}{m-2}}.
\]
Since%
\[
0\leq\frac{m-6}{m-2}<1,
\]
Theorem \ref{th_f(u)} yields%
\[
u\left(  x\right)  \rightarrow0\text{, as }x\rightarrow\infty.
\]

\end{proof}

\noindent As an immediate consequence, we obtain the following non-existence result. Note that
this applies, for instance,   to an expanding, gradient Ricci
soliton $M$. Indeed, in this case, the scalar curvature assumption is
compatible with the restriction $\inf_{M}S\leq0$ imposed by the soliton structure.

\begin{corollary}
\label{cor_conformal_feller}
On a stochastically complete and Feller manifold $\left(  M,\left\langle
,\right\rangle \right)  $ of dimension $m\geq6$ and%
\[
\sup_{M\backslash\Omega}S\left(  x\right)  \leq0
\]
one cannot perform a conformal change $\overline{\left\langle ,\right\rangle
}=v^{\frac{4}{m-2}}\left\langle ,\right\rangle $ in such a way that%
\[
0<v_{\ast}\leq v\left(  x\right)  \leq v^{\ast}<+\infty
\]
and%
\[
\inf_{M\backslash\Omega}\overline{S}\left(  x\right)  =\overline{S}_{\ast}>0.
\]

\end{corollary}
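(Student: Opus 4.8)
The plan is to deduce the corollary by contradiction, directly from the preceding theorem on conformal deformations. Suppose, contrary to the assertion, that on $M$ one could perform a conformal change $\overline{\langle\,,\rangle}=v^{\frac{4}{m-2}}\langle\,,\rangle$ whose factor satisfies $0<v_{*}\leq v(x)\leq v^{*}<+\infty$ on $M$ and for which $\inf_{M\setminus\Omega}\overline{S}(x)=\overline{S}_{*}>0$. I would then check that every hypothesis of that theorem is in force in this situation: $M$ is stochastically complete and Feller and has dimension $m\geq 6$ by the standing assumptions of the corollary; the background scalar curvature satisfies $\sup_{M\setminus\Omega}S(x)\leq 0$; the conformal factor is bounded below, so $\inf_{M}v=v_{*}>0$; and the scalar curvature of the deformed metric satisfies $\inf_{M\setminus\Omega}\overline{S}(x)=\overline{S}_{*}>0$. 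Consequently that theorem applies and gives $v(x)\to +\infty$ as $x\to\infty$, which is incompatible with the upper bound $v(x)\leq v^{*}<+\infty$. This contradiction proves the corollary.

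There is essentially no difficulty here, the corollary being just the contrapositive form of the theorem and the only point to verify being that the hypotheses transfer verbatim. If one wants a self-contained argument one may argue as in the proof of the preceding theorem: the positive function $u=v^{-1}$ is bounded, with $1/v^{*}\leq u\leq 1/v_{*}$ on $M$, and satisfies on $M\setminus\Omega$
\[
c_{m}^{-1}\Delta u\geq -Su+\overline{S}\,u^{\frac{m-6}{m-2}}\geq \overline{S}_{*}\,u^{\frac{m-6}{m-2}},
\]
because there $S\leq 0$ while $\overline{S}\geq\overline{S}_{*}>0$. Since $\frac{m-6}{m-2}\in[0,1)$ and $u\leq 1/v_{*}$ one has $u^{\frac{m-6}{m-2}}\geq v_{*}^{4/(m-2)}u$, so that $\Delta u\geq \lambda u$ on $M\setminus\Omega$ for a suitable constant $\lambda>0$. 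By Theorem~\ref{thm_stoc+feller} this forces $u(x)\to 0$ as $x\to\infty$, contradicting the lower bound $u\geq 1/v^{*}>0$. (Alternatively, one may invoke Theorem~\ref{th_f(u)} directly, with $\Lambda(t)=c_{m}\overline{S}_{*}t^{\frac{m-6}{m-2}}$ and $\xi=\frac{m-6}{m-2}$.)
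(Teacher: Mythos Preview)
Your proposal is correct and matches the paper's approach: the paper simply states that the corollary is ``an immediate consequence'' of the preceding theorem, which is exactly the contradiction argument you spell out in your first paragraph. Your optional self-contained version is also fine and just reproduces the proof of that theorem with the additional upper bound on $v$ supplying the contradiction.
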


\subsubsection{Compact support property of bounded solutions of PDE's}
Recall that a certain PDE satisfies the compact support principle if
a solution, in the exterior of a compact set, which is non-negative and decays at infinity,
must have compact support.
We are going to analyze some situations where the decay assumption can be relaxed.
This has applications to the Yamabe problem.
\begin{theorem}
\label{th_feller+compactsupport}
Let $\left(  M,\left\langle ,\right\rangle \right)  $ be a complete and
stochastically complete, Cartan-Hadamard manifold. Let $u>0$ be a bounded
solution of
\begin{equation}
\label{compactsupport_ineq}
\Delta u\geq\lambda\left(  u\right)  \text{, on }M\backslash\Omega
\end{equation}
for some domain $\Omega\subset\subset M$ and for some non-decreasing function
$\lambda:[0,+\infty)\rightarrow\lbrack0,+\infty)$ satisfying the following
conditions:%
\begin{equation}
\label{Lambda_conditions}
\text{(a) }\lambda\left(  0\right)  =0\text{; (b) }\lambda\left(  t\right)
>0\text{ }\forall t>0\text{; (c) }\liminf_{t\rightarrow0+}\frac{\lambda\left(
t\right)  }{t^{\xi}}>0,
\end{equation}
for some $0\leq\xi<1$. Then $u$ has compact support.
\end{theorem}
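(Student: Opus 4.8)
The plan is to combine Theorem~\ref{th_f(u)} (applied with $\Delta_f = \Delta$) with the compact support principle in the form of Theorem~\ref{thm-cpt-supp}. The rough strategy is: first use the Feller property to upgrade the qualitative behavior of $u$ from ``bounded'' to ``decaying to zero at infinity'', and then invoke the compact support principle, which needs precisely such a decay hypothesis, to conclude that $u$ actually vanishes off a compact set.

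\emph{Step 1: decay at infinity.} Since $(M,\langle\,,\rangle)$ is stochastically complete and Feller, and since $\lambda$ satisfies the conditions (a), (b), (c) with $0\le \xi<1$ (hence in particular the weaker set of conditions with $0\le\xi\le 1$ required in Theorem~\ref{th_f(u)}), we may apply Theorem~\ref{th_f(u)} directly with $\Delta_f=\Delta$ (i.e.\ $f$ constant). It yields
\[
\lim_{x\to\infty} u(x) = 0.
\]
So $u>0$ is a bounded solution of $\Delta u\ge \lambda(u)$ on the exterior domain $M\setminus\Omega$ which now decays to zero at infinity.

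\emph{Step 2: compact support.} It remains to check the hypotheses of Theorem~\ref{thm-cpt-supp}. First, setting $\Lambda(t)=\int_0^t\lambda(s)\,ds$, condition (c) with $\xi<1$ gives $\lambda(t)\ge c\,t^\xi$ for $t$ near $0$, so $\Lambda(t)\ge \frac{c}{\xi+1}t^{\xi+1}$ near $0$; since $\xi+1<2$, we get $1/\sqrt{\Lambda(t)}\lesssim t^{-(\xi+1)/2}\in L^1(0+)$, which is exactly \eqref{cpt-supp-1}. (This is the one place the strict inequality $\xi<1$, rather than $\xi\le 1$, is genuinely used.) Second, since $M$ is Cartan-Hadamard, the Hessian comparison theorem gives $\Delta r(x)\ge \frac{m-1}{r(x)}>0$ on $M\setminus(\{o\}\cup\mathrm{cut}(o))=M\setminus\{o\}$, and this is easily promoted to the weak inequality $\Delta r\ge -C^2$ on all of $M$ (the distance function of a Cartan-Hadamard manifold is smooth away from $o$ and, being the distance from a pole, satisfies the required inequality weakly across $o$ as well). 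Hence \eqref{cpt-supp-2} holds. Theorem~\ref{thm-cpt-supp} then applies to the inequality \eqref{compactsupport_ineq} (which is \eqref{ineq-cpt-supp} with $\Delta_f=\Delta$), and we conclude that $u$ vanishes identically outside a compact set.

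\emph{Main obstacle.} The analytical content is entirely packaged in the two cited theorems, so the only delicate points are bookkeeping: (i) verifying that the hypotheses on $\lambda$ in Theorem~\ref{th_f(u)} and in Theorem~\ref{thm-cpt-supp} are both implied by \eqref{Lambda_conditions} with $\xi<1$ — in particular translating condition (c) into the Keller--Osserman-type integrability \eqref{cpt-supp-1}; and (ii) confirming that a Cartan-Hadamard manifold satisfies the one-sided bound $\Delta r\ge -C^2$ weakly on all of $M$, i.e.\ that the singularity of $r$ at the pole $o$ causes no trouble in the weak formulation. Neither is a serious difficulty, but (i) is where one must be careful that the exponent range stated in the theorem ($\xi<1$ strict) is the sharp one allowing both ingredients to be used simultaneously.
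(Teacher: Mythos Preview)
Your proof is correct and follows essentially the same two-step route as the paper: first apply Theorem~\ref{th_f(u)} to deduce $u(x)\to 0$ at infinity, then invoke the compact support principle (Theorem~\ref{thm-cpt-supp}). One small oversight: in Step~1 you assert that $M$ is Feller as if this were a hypothesis, but the theorem only assumes $M$ is Cartan-Hadamard; you should recall (as the paper does, citing \cite{Az-bsmf} and \cite{ps-feller}) that every Cartan-Hadamard manifold is Feller. Your Step~2 is in fact more explicit than the paper's own proof, which simply cites \cite{puriser-jde} and Theorem~\ref{thm-cpt-supp} without spelling out the verification of \eqref{cpt-supp-1} and \eqref{cpt-supp-2}.
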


\begin{proof}
Recall that a Cartan-Hadamard manifold is Feller (see, \cite{Az-bsmf}, \cite{ps-feller}).
By Theorem \ref{th_f(u)}, we know that $u\left(  x\right)  \rightarrow0$, as
$x\rightarrow\infty$. The conclusion now follows from the compact support principle which,
is  valid under the stated assumptions on $M$ and $\lambda$ (see
\cite{puriser-jde}, Theorem 1.1, and  Theorem~\ref{thm-cpt-supp} above).
\end{proof}

The above theorem can be applied to obtain nonexistence results.
For instance, combining  Theorem~\ref{th_feller+compactsupport} and
Corollary~\ref{cor_conformal_feller}  we get

\begin{corollary}
Let  $\left(  M,\left\langle ,\right\rangle \right)  $ be a stochastically complete
Cartan-Hadamard manifold of dimension $m\geq6$.
Then the metric of $M$ cannot be conformally deformed to a new metric
$\overline{\langle\,,\rangle}= v^2 \langle\,,\rangle$ with $v_*>0$ and scalar curvature
$\overline S$ satisfying $\liminf_{x\to \infty} \overline{S} >0.$
\end{corollary}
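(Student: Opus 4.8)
The plan is to run the conformal-deformation argument underlying Corollary~\ref{cor_conformal_feller} (and the theorem preceding it), but to invoke Theorem~\ref{th_feller+compactsupport} in place of Theorem~\ref{th_f(u)}, and then to derive a contradiction from the strict positivity of a conformal factor. Arguing by contradiction, suppose $\overline{\langle\,,\rangle}=v^{2}\langle\,,\rangle$ is such a deformation, with $v_{*}=\inf_{M}v>0$; the hypothesis $\liminf_{x\to\infty}\overline{S}>0$ furnishes a relatively compact domain $\Omega$ and a constant $\overline{S}_{*}>0$ with $\overline{S}\geq\overline{S}_{*}$ on $M\setminus\Omega$. Since $M$ is Cartan-Hadamard, its sectional curvature is everywhere nonpositive, hence so are its Ricci and scalar curvatures, so $S\leq 0$ on all of $M$. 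Passing to the Yamabe normalization by setting $\phi=v^{(m-2)/2}$, so that $\overline{\langle\,,\rangle}=\phi^{4/(m-2)}\langle\,,\rangle$, the function $u:=\phi^{-1}$ is smooth, strictly positive on $M$, and bounded above, with $0<u\leq v_{*}^{-(m-2)/2}<+\infty$. Note that only the lower bound $v_{*}>0$ is used here; no control on $\sup_{M}v$ is needed.

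Substituting $u$ into the Yamabe equation exactly as in the proof of the conformal-deformation theorem preceding Corollary~\ref{cor_conformal_feller}, one obtains, on $M\setminus\Omega$,
\[
c_{m}^{-1}\Delta u\;\geq\;-Su+\overline{S}\,u^{\frac{m-6}{m-2}}\;\geq\;\overline{S}_{*}\,u^{\frac{m-6}{m-2}},
\]
where the last step uses $S\leq 0$ on $M$ and $\overline{S}\geq\overline{S}_{*}$ on $M\setminus\Omega$. When $m>6$ this is an inequality of the form $\Delta u\geq\lambda(u)$ with $\lambda(t)=c_{m}\overline{S}_{*}\,t^{\xi}$ and $\xi=\frac{m-6}{m-2}\in(0,1)$, which satisfies conditions (a)--(c) of Theorem~\ref{th_feller+compactsupport}. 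For the endpoint dimension $m=6$ the exponent degenerates to $0$ and the right-hand side is the positive constant $c_{m}\overline{S}_{*}$; since $u^{0}=1\geq\min\{1,\sqrt u\}$, we then have $\Delta u\geq c_{m}\overline{S}_{*}\min\{1,\sqrt u\}$ on $M\setminus\Omega$, and the function $\lambda(t)=c_{m}\overline{S}_{*}\min\{1,\sqrt t\}$ is continuous, non-decreasing, vanishes at $0$, is positive for $t>0$, and satisfies (c) with $\xi=1/2$; so Theorem~\ref{th_feller+compactsupport} again applies.

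Finally, $M$ is complete (being Cartan-Hadamard) and stochastically complete by hypothesis, and every Cartan-Hadamard manifold is Feller (as recalled in the proof of Theorem~\ref{th_feller+compactsupport}); hence Theorem~\ref{th_feller+compactsupport} yields that $u$ has compact support. But $u=\phi^{-1}>0$ everywhere on $M$, and $M$, being Cartan-Hadamard, is noncompact, so $u$ cannot vanish outside any compact set. This contradiction proves the corollary. It is precisely the upgrade from the conclusion ``$u\to 0$ at infinity'' extracted in Corollary~\ref{cor_conformal_feller} (which on its own does not preclude the deformation) to ``$u$ has compact support'' that lets one dispense with any upper bound on $v$. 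Every ingredient but one is a direct transcription of results already in the paper; the step I would treat most carefully is the endpoint $m=6$, where the right-hand side of the displayed inequality is constant and has to be replaced by an admissible nonlinearity vanishing at the origin, as done above.
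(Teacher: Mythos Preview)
Your proof is correct and follows exactly the route the paper indicates: rerun the Yamabe-equation computation from the conformal-deformation theorem preceding Corollary~\ref{cor_conformal_feller}, but feed the resulting inequality for $u=\phi^{-1}$ into Theorem~\ref{th_feller+compactsupport} rather than Theorem~\ref{th_f(u)}, so that the conclusion ``$u$ has compact support'' directly contradicts $u>0$ on the noncompact manifold $M$, obviating any upper bound on $v$. Your handling of the endpoint $m=6$ --- replacing the constant right-hand side by the continuous non-decreasing $\lambda(t)=c_m\overline{S}_*\min\{1,\sqrt t\}$ so that $\lambda(0)=0$ and the compact support principle (Theorem~\ref{thm-cpt-supp}) applies --- is a genuine clarification that the paper's one-line indication glosses over.
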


Of course for the  conclusion of Theorem~\ref{th_feller+compactsupport} to hold it
suffices that $M$ be stochastically complete, Feller and that the
compact support principle holds for solutions  of
\eqref{compactsupport_ineq}. Theorem~\ref{th_feller+compactsupport} can be therefore
generalized as follows.

\begin{theorem}
Let $\left(  M,\left\langle ,\right\rangle \right)  $ be a complete Riemannian
manifold with a pole $o$ and set $r\left(  x\right)  =d\left(  x,o\right)  $.
Assume that%
\[
Ric\geq-K\left(  r\left(  x\right)  \right)  ,
\]
where $K>0$ is an increasing function satisfying%
\[
\frac{1}{\sqrt{K}}\notin L^{1}\left(  +\infty\right)  .
\]
Assume also that%
\[
Sec_{rad}\leq G\left(  r\left(  x\right)  \right)
\]
where $G$ is a smooth even function such that the
unique solution $g$ of the Cauchy problem
\begin{equation*}
\left\{
\begin{array}
[c]{l}%
g^{\prime\prime}+Gg=0\\
g\left(  0\right)  =0,\text{ }g^{\prime}\left(  0\right)  =1.
\end{array}
\right.  
\end{equation*}
satisfies
\begin{equation*}
\inf\frac {g'}g(t)>-\infty.
\end{equation*}
If $u\geq 0$ is  a bounded solution of
\[
\Delta u\geq\lambda\left(  u\right)  \text{, on }M\backslash\Omega
\]
where $\lambda$ satisfies \eqref{Lambda_conditions},
then $u$ has compact support.
\end{theorem}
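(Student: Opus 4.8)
The plan is to reduce the statement to the combination of two ingredients already established in the excerpt: first, that under the stated curvature assumptions $M$ is stochastically complete and Feller for the ordinary Laplacian $\Delta$, so that Theorem~\ref{th_f(u)} applies to conclude $u(x)\to 0$ as $x\to\infty$; second, that the same curvature assumptions guarantee the compact support principle via Theorem~\ref{thm-cpt-supp}, so that the decay $u(x)\to 0$ forces $u$ to vanish outside a compact set. Thus the entire content of the proof is a verification of hypotheses.

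First I would check stochastic completeness and the Feller property. The lower Ricci bound $\mathrm{Ric}\geq -K(r(x))$ with $1/\sqrt K\notin L^1(+\infty)$ is precisely the hypothesis \eqref{stoch-completeness2}--\eqref{stoch-completeness2a} of Theorem~\ref{stoch_completeness} in the unweighted case ($f\equiv 0$, $k_2\equiv 0$, $k_1=\sqrt K$), which gives stochastic completeness. For the Feller property I would use the radial sectional curvature bound $Sec_{\mathrm{rad}}\leq G(r(x))$ together with the Hessian (Laplacian) comparison theorem: since $o$ is a pole, this yields the lower estimate $\Delta r(x)\geq (m-1)\frac{g'}{g}(r(x))$ with $g$ the solution of $g''+Gg=0$, $g(0)=0$, $g'(0)=1$. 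The assumption $\inf (g'/g)>-\infty$ must then be shown to imply that $g$ satisfies condition \eqref{model1} or \eqref{model2} in Theorem~\ref{th-comparison1}(iii) (with $n=m$); I would argue that $\inf(g'/g)>-\infty$ forces $g(r)$ to decay no faster than some negative exponential, hence $g^{-(m-1)}$ grows at most exponentially, and either $g^{-(m-1)}\in L^1(+\infty)$ — giving \eqref{model1} — or, when it is not integrable, a Grönwall-type estimate on $\int_r^{+\infty}g^{m-1}$ against $g^{m-1}$ yields the non-integrability in \eqref{model2}(b). Either way Theorem~\ref{th-comparison1}(iii) gives that $M$ is Feller. (Alternatively, as remarked in the excerpt, one may invoke Theorem~5.9 of \cite{ps-feller}, which is tailored to exactly this type of radial curvature hypothesis.)

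Next, with $M$ stochastically complete and Feller and $\lambda$ satisfying \eqref{Lambda_conditions}, Theorem~\ref{th_f(u)} applied with $\Delta_f=\Delta$ gives $\lim_{x\to\infty}u(x)=0$. Finally I would invoke the compact support principle: the lower Ricci bound again gives, via comparison, a lower bound $\Delta r(x)\geq -C^2$ weakly on $M$ (the bound $\mathrm{Ric}\geq -K(r)$ with $K$ locally bounded yields $\Delta r\leq (m-1)\frac{\mathrm{cn}_K(r)}{\mathrm{sn}_K(r)}$-type upper bounds, but for \eqref{cpt-supp-2} one needs the opposite inequality, which on a manifold with a pole follows from $Sec_{\mathrm{rad}}\leq G$ and the comparison $\Delta r\geq (m-1)g'/g\geq -C^2$ precisely because $\inf(g'/g)>-\infty$), and condition (c) in \eqref{Lambda_conditions} with $\xi<1$ gives $\Lambda(s)=\int_0^s\lambda\gtrsim s^{1+\xi}$ near $0$, so $1/\sqrt{\Lambda(s)}\in L^1(0+)$, i.e.\ \eqref{cpt-supp-1} holds. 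Theorem~\ref{thm-cpt-supp} then yields that $u$ has compact support.

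The main obstacle is the Feller verification — more precisely, showing that $\inf(g'/g)>-\infty$ is strong enough to force $g$ into the dichotomy \eqref{model1}/\eqref{model2} of Theorem~\ref{th-comparison1}(iii). The delicate point is the borderline case where $g^{-(m-1)}\notin L^1(+\infty)$: here one needs the ratio $\big(\int_r^{+\infty}g^{m-1}\big)/g^{m-1}$ to be non-integrable at infinity, and since $\inf(g'/g)>-\infty$ only controls the exponential decay rate of $g$ from below, one must rule out pathological oscillatory behaviour of $g$; this is exactly the kind of ODE analysis carried out in \cite{ps-feller}, so in practice I would lean on Theorem~5.9 there rather than redo it. Everything else — stochastic completeness, the decay of $u$, and the $L^1(0+)$ condition on $1/\sqrt\Lambda$ — is a routine unwinding of the hypotheses.
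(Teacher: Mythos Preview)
Your overall strategy --- verify stochastic completeness, the Feller property, and the compact support principle, then combine Theorem~\ref{th_f(u)} with Theorem~\ref{thm-cpt-supp} --- is correct and is exactly the paper's implicit argument (the theorem is stated without proof as a direct generalization of Theorem~\ref{th_feller+compactsupport}). The difference lies in how you obtain the Feller property. You try to extract it from the \emph{sectional} curvature bound by forcing $\inf(g'/g)>-\infty$ into the dichotomy \eqref{model1}/\eqref{model2} of Theorem~\ref{th-comparison1}(iii), and you rightly flag this as the delicate step. The paper's route is much simpler: Feller follows directly from the \emph{Ricci} hypothesis $\mathrm{Ric}\geq -K(r)$, $1/\sqrt K\notin L^1(+\infty)$, by Hsu's theorem \cite{Hsu-AnnProb} --- this is precisely the class of examples cited at the beginning of the applications subsection as being both stochastically complete and Feller. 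The sectional curvature hypothesis with $\inf(g'/g)>-\infty$ is present solely to give the lower bound $\Delta r\geq (m-1)g'/g\geq -C^2$ needed in Theorem~\ref{thm-cpt-supp}; it plays no role in the Feller verification. Your detour can in fact be completed --- from $g'/g\geq -C$ one gets $(g^{m-1})'\geq -(m-1)C\,g^{m-1}$, so when $g^{m-1}\in L^1(+\infty)$ one has $\int_r^\infty g^{m-1}\geq g^{m-1}(r)/((m-1)C)$, giving \eqref{model2}(b) with no oscillation worries, while if $g^{m-1}\notin L^1(+\infty)$ the model has infinite volume and is automatically Feller --- but invoking Hsu makes the ``main obstacle'' disappear entirely.
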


In a similar view, using the comparison results established in
Theorem~\ref{th-comparison1} we obtain the following

\begin{theorem}
\label{th_feller+compactsupport1}
Let $M_f $ be a geodesically complete, stochastically complete, weighted manifold with a pole
$o$, and let  by $r(x)$ be the Riemannian distance function from $M$. Suppose
that there exists an integer $n$ and an even function $g:\mathbb{R}\to [0,+\infty )$ such
that $g(0)=0$ $g'(0)=1$ and $g(r)>0$ for $r>0$ such that
\[
\Delta_f r(x) \geq (n-1)\frac{g'}g(r(x)) \quad \text{ for }\, r(x)>>1.
\]
Suppose moreover
\begin{equation*}
\inf\frac {g'}g(t)>-\infty
\end{equation*}
and that either
\begin{equation}
\frac{1}{g^{n-1}\left(  r\right)  }\in L^{1}\left(  +\infty\right)
\label{model1'}%
\end{equation}
or%
\begin{equation}
\text{(i) }\frac{1}{g^{n-1}\left(  r\right)  }\notin L^{1}\left(
+\infty\right)  \text{\quad and\quad(ii) }\frac{\int_{r}^{+\infty}%
g^{n-1}\left(  t\right)  dt}{g^{n-1}\left(  r\right)  }\notin L^{1}\left(
+\infty\right).
\label{model2'}%
\end{equation}
If $u>0$ is  a bounded
solution of
\[
\Delta_f u\geq\lambda\left(  u\right)  \text{, on }M\backslash\Omega
\]
where $\lambda$ satisfies \eqref{Lambda_conditions}, then $u$ has compact support.
\end{theorem}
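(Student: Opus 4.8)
The plan is to combine two ingredients already established in the paper: the extended vanishing result for solutions at infinity (Theorem~\ref{th_f(u)}), and the comparison criterion for the Feller property together with the compact support principle (Theorems~\ref{th-comparison1}(iii) and~\ref{thm-cpt-supp}). First I would observe that the hypothesis $\Delta_f r(x)\geq (n-1)(g'/g)(r(x))$ for $r(x)\gg 1$, combined with either \eqref{model1'} or \eqref{model2'}, is exactly condition (iii) of Theorem~\ref{th-comparison1}: note that \eqref{model1'} is \eqref{model1}, and \eqref{model2'} is \eqref{model2}, with the same $g$ and $n$. Hence $M_f$ is Feller with respect to $\Delta_f$. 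Since $M_f$ is assumed stochastically complete and Feller, and $\lambda$ satisfies \eqref{Lambda_conditions} with some $0\leq\xi<1$, Theorem~\ref{th_f(u)} applies verbatim and gives $u(x)\to 0$ as $x\to\infty$.

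Next I would upgrade this decay to compact support by invoking the compact support principle, Theorem~\ref{thm-cpt-supp}. That theorem requires two things: (a) the Dini-type integrability $1/\sqrt{\Lambda(s)}\in L^1(0+)$ where $\Lambda(t)=\int_0^t\lambda(s)\,ds$, and (b) a lower bound $\Delta_f r(x)\geq -C^2$ holding weakly on $M$. For (a), the condition $\liminf_{t\to 0+}\lambda(t)/t^\xi>0$ with $\xi<1$ gives $\lambda(s)\geq c\,s^\xi$ near $0$, hence $\Lambda(s)\geq \frac{c}{\xi+1}s^{\xi+1}$, so $1/\sqrt{\Lambda(s)}\leq C' s^{-(\xi+1)/2}$; since $\xi<1$ we have $(\xi+1)/2<1$ and the integral converges at $0$. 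For (b), I would argue that the hypotheses $\inf (g'/g)(t)>-\infty$ together with $\Delta_f r(x)\geq (n-1)(g'/g)(r(x))$ for large $r$ give a uniform lower bound $\Delta_f r(x)\geq -C^2$ outside a compact set; near the pole $o$, $g'/g$ behaves like $1/r$ which is bounded below on compact annuli (and $o$ is a pole, so $r$ is smooth away from $o$, while at $o$ one uses the standard distributional argument that $\Delta_f r$ has no negative singular part at a pole). Since a weak lower bound $\Delta_f r\geq -C^2$ is what Theorem~\ref{thm-cpt-supp} needs, this is satisfied. Therefore the compact support principle holds for $\Delta_f u\geq\lambda(u)$, and since our $u$ is a positive bounded solution on $M\setminus\Omega$ that tends to $0$ at infinity, $u$ vanishes identically outside a compact set.

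The main obstacle I expect is the careful handling of point (b) — extracting a genuine global weak lower bound $\Delta_f r\geq -C^2$ on all of $M$ from the two given hypotheses, which only control $g'/g$ in an averaged sense ($\inf(g'/g)>-\infty$) and $\Delta_f r$ only at infinity. One must verify that the behavior near the pole does not spoil the bound, using that $o$ is a pole so the distance function is smooth on $M\setminus\{o\}$ and the conical singularity at $o$ only contributes a \emph{positive} distributional term (so it can only help the lower bound), and that on the compact region between the pole and the radius where the comparison $\Delta_f r\geq(n-1)(g'/g)(r)$ kicks in, continuity of $\Delta_f r$ on $M\setminus\{o\}$ plus $(g'/g)(r)\sim 1/r$ gives a finite lower bound. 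Once this bookkeeping is done, the theorem follows by simply chaining Theorem~\ref{th-comparison1}(iii) $\Rightarrow$ Feller, then Theorem~\ref{th_f(u)} $\Rightarrow$ $u\to 0$, then Theorem~\ref{thm-cpt-supp} $\Rightarrow$ $u$ has compact support.
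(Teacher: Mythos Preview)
Your proposal is correct and follows essentially the same route as the paper: invoke Theorem~\ref{th-comparison1}(iii) to obtain the Feller property, apply Theorem~\ref{th_f(u)} to deduce $u(x)\to 0$, and then use Theorem~\ref{thm-cpt-supp} to conclude compact support. Your write-up is in fact more careful than the paper's, which simply asserts that Theorem~\ref{thm-cpt-supp} applies; your verification of the Dini condition \eqref{cpt-supp-1} and of the weak lower bound \eqref{cpt-supp-2} (using $\inf(g'/g)>-\infty$ at infinity, continuity on compact annuli, and the positive distributional contribution at the pole) fills in details that the paper leaves implicit.
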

\begin{proof}
Indeed, since $g$ satisfies the  conditions \eqref{model1'} or \eqref{model2'} it follows from
Theorem~\ref{th-comparison1} (iii) that $M_f$ is Feller. On the other hand, by
Theorem~\ref{thm-cpt-supp}
the compact support principle holds for solutions  of \eqref{compactsupport_ineq}.
Therefore the conclusion follows as in
Theorem~\ref{th_feller+compactsupport}.
\end{proof}


\section{Spectral theory of weighted Laplacians}

\subsection{Basic theory}

In this section we collect some results on the spectral properties of the
$f$-Laplacian. Generally the proofs may be obtained by adapting those valid
for the ordinary Laplacian, and therefore they will be mostly omitted.

The first basic observation is that the $f$-Laplacian is associated to the
$f$-Diriclet form
\begin{equation}
\label{spectral_1}Q_{f}(u) = \int_{M} |\nabla u| ^{2} d\mathrm{vol}_{f},
\end{equation}
originally defined on $C^{\infty}_{c}(M)$. The form $Q_{f}$ is closable and
its closure induces a non-negative self-adjoint operator on $L^{2}%
(d\mathrm{vol}_{f})$, still denoted with $-\Delta_{f}$. The same proof valid
for the usual Laplacian can be adapted to show that $-\Delta_{f}$ is
essentially self-adjoint on $C^{\infty}_{c}(M).$
It is also useful to note that under the unitary transformation $T(u) =
e^{-f/2} u$ of $L^{2}(d\mathrm{vol})$ onto $L^{2}(d\mathrm{vol}_{f})$, the
operator $\Delta_{f}$ is unitarily equivalent to the Schr\"odinger operator
\[
\Delta+ \left(  \frac12 \Delta f - \frac14 |\nabla u|^{2}\right)  ,
\]

More generally, if $\Omega$ is any open set in $M$, we will denote with
$-\Delta_{f}^{\Omega}$ the Friedrichs extension of the operator $-\Delta_{f}$
originally on $C^{\infty}_{c}(\Omega)$. Its domain is given by
\[
Dom(-\Delta_{f}^{\Omega}) = \{u \in H^{1}_{0}(\Omega, d\mathrm{vol}_{f})\,:\,
(\Delta_{f})_{dist} u\in L^{2}(\Omega, d \mathrm{vol}_{f})\}.
\]
The operator $-\Delta_{f}^{\Omega}$ is a positive operator, its spectrum is a
subset of $[0,+\infty)$ and its bottom admits the usual variational
characterization
\[
\lambda_{1}(-\Delta_{f}^ {\Omega}) = \inf\frac{\int_{\Omega}|\nabla u|^{2}
d\mathrm{vol}_{f}}{\int_{\Omega}|u|^{2} d\mathrm{vol}_{f}},
\]
where the infimum is taken over $u\in C^{\infty}_{c}(\Omega)$, or
equivalently, in $H^{1}_{0}(\Omega)$. Also, $\sigma(-\Delta_{f}^{\Omega})$ can
be decomposed into the disjoint union $\sigma_{d}(-\Delta_{f}^\Omega)\cup
\sigma_{ess} (-\Delta_{f}^{\Omega})$, where $\sigma_{d}$ is the set of isolated
eigenvalues of finite multiplicity, called the discrete spectrum, and its complement $\sigma_{ess}$, called the essential spectrum, is the
set of eigenvalues of infinite multiplicity and of accumulations points of the spectrum.

Adapting the arguments valid for the ordinary Laplacian (or using the above
mentioned unitary equivalence with a Schr\"odinger operator, see,
\cite{volpi-masterthesis}) one shows that the following decomposition
principle holds.

\begin{theorem}
\label{decomposition_thm} For every relatively compact domain $\Omega$
\[
\sigma_{ess} (-\Delta_{f}^ M) = \sigma_{ess} (-\Delta_{f}^{M\setminus
\overline{\Omega}}).
\]
In particular,
\[
\inf\sigma_{ess}(-\Delta_{f}^M) = \sup_{\Omega\subset\subset M}\lambda
_{1}(-\Delta_{f}^{M\setminus\overline{\Omega}}).
\]

\end{theorem}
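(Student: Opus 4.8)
The statement to prove is the decomposition principle for $-\Delta_f$: for every relatively compact $\Omega$, the essential spectra of $-\Delta_f^M$ and of $-\Delta_f^{M\setminus\overline\Omega}$ coincide, and consequently $\inf\sigma_{ess}(-\Delta_f^M)=\sup_{\Omega\subset\subset M}\lambda_1(-\Delta_f^{M\setminus\overline\Omega})$. The plan is to follow the classical Donnelly--Li / Weyl-sequence argument, transplanted to the weighted setting via the unitary equivalence $T(u)=e^{-f/2}u$ between $\Delta_f$ on $L^2(d\mathrm{vol}_f)$ and the Schrödinger operator $\Delta+V$, $V=\tfrac12\Delta f-\tfrac14|\nabla f|^2$, on $L^2(d\mathrm{vol})$, as indicated earlier in the text. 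Since unitary equivalence preserves the essential spectrum, it suffices to prove the decomposition principle for Schrödinger operators on $L^2(d\mathrm{vol})$ with a smooth potential, and that is entirely standard; alternatively one argues directly with $\Delta_f$ using cutoff functions and Rellich compactness with respect to $d\mathrm{vol}_f$, which is what I would write out.

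First I would recall the Weyl-type criterion: $\mu\in\sigma_{ess}(-\Delta_f^N)$ for $N\in\{M,\,M\setminus\overline\Omega\}$ if and only if there is a sequence $\{u_k\}\subset Dom(-\Delta_f^N)$ with $\|u_k\|_{L^2(d\mathrm{vol}_f)}=1$, $u_k\rightharpoonup 0$ weakly, and $\|(-\Delta_f^N-\mu)u_k\|_{L^2(d\mathrm{vol}_f)}\to 0$. For the inclusion $\sigma_{ess}(-\Delta_f^{M\setminus\overline\Omega})\subseteq\sigma_{ess}(-\Delta_f^M)$ one extends each such $u_k$ by zero across $\partial\Omega$; since $u_k\in H^1_0(M\setminus\overline\Omega,d\mathrm{vol}_f)$, the extension lies in $H^1_0(M,d\mathrm{vol}_f)$ and the singular sequence property is preserved verbatim. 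For the reverse inclusion, take a singular sequence $\{u_k\}$ for $-\Delta_f^M$ at $\mu$, fix a cutoff $\chi\in C^\infty_c(M)$ with $\chi\equiv 1$ on a neighborhood of $\overline\Omega$, and set $v_k=(1-\chi)u_k$. One checks $v_k\in Dom(-\Delta_f^{M\setminus\overline\Omega})$ and
\[
(-\Delta_f-\mu)v_k=(1-\chi)(-\Delta_f-\mu)u_k-2\langle\nabla\chi,\nabla u_k\rangle-(\Delta_f\chi)u_k.
\]
The first term tends to $0$ in $L^2(d\mathrm{vol}_f)$; the last term tends to $0$ because $\Delta_f\chi$ is bounded with compact support and $u_k\rightharpoonup 0$ implies $u_k\to 0$ strongly in $L^2_{loc}(d\mathrm{vol}_f)$ (Rellich on the relatively compact set $\mathrm{supp}\,\chi$, using elliptic regularity to bound $\{u_k\}$ in $H^1$ on that set). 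The cross term needs the same local $H^1\to L^2$ compactness to kill $\nabla u_k$ on $\mathrm{supp}\,\nabla\chi$: one bounds $\|\nabla u_k\|_{L^2(d\mathrm{vol}_f)}$ uniformly from $\|u_k\|=1$ and $\|(-\Delta_f-\mu)u_k\|\to 0$ via the Dirichlet form $Q_f$, then uses interior elliptic estimates plus Rellich–Kondrachov on the compact support of $\nabla\chi$. Finally $\|v_k\|\to 1$ (again since $\|\chi u_k\|\to 0$), so after normalization $\{v_k\}$ is a singular sequence for $-\Delta_f^{M\setminus\overline\Omega}$ at $\mu$, giving $\mu\in\sigma_{ess}(-\Delta_f^{M\setminus\overline\Omega})$.

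For the displayed corollary, note that $\sigma_{ess}(-\Delta_f^{M\setminus\overline\Omega})$ is independent of the relatively compact $\Omega$ by the first part, so $\inf\sigma_{ess}(-\Delta_f^M)=\inf\sigma_{ess}(-\Delta_f^{M\setminus\overline\Omega})\geq\lambda_1(-\Delta_f^{M\setminus\overline\Omega})$ for every such $\Omega$, whence $\inf\sigma_{ess}(-\Delta_f^M)\geq\sup_\Omega\lambda_1(-\Delta_f^{M\setminus\overline\Omega})$. For the opposite inequality, exhaust $M$ by relatively compact $\Omega_j\uparrow M$; the functions $\lambda_1(-\Delta_f^{M\setminus\overline{\Omega_j}})$ are non-decreasing in $j$, and by Persson's formula (or directly: any singular sequence at $\mu=\inf\sigma_{ess}$ can, by the cutoff argument above, be pushed to live outside any given $\Omega_j$) one gets $\lambda_1(-\Delta_f^{M\setminus\overline{\Omega_j}})\to\inf\sigma_{ess}(-\Delta_f^M)$, which yields the reverse inequality.

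**Main obstacle.** The only genuinely technical point is the local compactness: showing that a singular sequence, which is bounded in $L^2(d\mathrm{vol}_f)$ and "almost $\mu$-harmonic," is bounded in $H^1_{loc}$ so that Rellich applies on $\mathrm{supp}\,\chi$ and $\mathrm{supp}\,\nabla\chi$. This is where one invokes interior elliptic estimates for $\Delta_f$ (equivalently for $\Delta+V$ under the unitary transform, with $V$ smooth hence locally bounded), together with the identity $Q_f(u_k)=\langle-\Delta_f u_k,u_k\rangle_{L^2(d\mathrm{vol}_f)}\leq(\mu+o(1))$. Everything else is bookkeeping identical to the Euclidean/Riemannian case, which is why the statement is given without a detailed proof; I would simply cite \cite{volpi-masterthesis} for the Schrödinger reformulation and record the cutoff computation above.
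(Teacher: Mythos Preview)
Your proposal is correct and matches the paper exactly: the paper does not prove this theorem but simply remarks that it follows by adapting the classical arguments for the ordinary Laplacian, or via the unitary equivalence $T(u)=e^{-f/2}u$ with the Schr\"odinger operator $\Delta+\bigl(\tfrac12\Delta f-\tfrac14|\nabla f|^2\bigr)$, referring to \cite{volpi-masterthesis}---precisely the two routes you outline. The only small point to tighten in your sketch is the inclusion $\sigma_{ess}(-\Delta_f^{M\setminus\overline\Omega})\subseteq\sigma_{ess}(-\Delta_f^M)$: the zero-extension of $u_k\in Dom(-\Delta_f^{M\setminus\overline\Omega})$ lies in $H^1_0(M,d\mathrm{vol}_f)$ but in general not in $Dom(-\Delta_f^M)$ (possible normal-derivative jump on $\partial\Omega$), so one should first apply the same cutoff trick to push the Weyl sequence away from $\partial\Omega$ before extending.
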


Similarly, one may generalize a result of R. Brooks, \cite{brooks1}, \cite{brooks2}, to obtain
the following upper bound for the infimum of the essential spectrum in terms
of the weighted volume growth of the manifold (see \cite{volpi-masterthesis}).

\begin{theorem}Let $M_{f}$ be a complete weighted manifold.

\begin{enumerate}
\item[(a)] If $\mathrm{vol}_{f}\left(  M\right)  =+\infty$, then%
\[
\limsup_{R\rightarrow+\infty}\frac{\log\mathrm{vol}_{f}\left(  B_{R}\right)  }%
{R}\geq\inf\sigma_{ess}\left(  -\Delta_{f}^{M}\right)  \geq\lambda_{1}\left(
-\Delta_{f}^{M}\right)  \geq0.
\]

\item[(b)] If $\mathrm{vol}_{f}\left(  M\right)  <+\infty$, then%
\[
\limsup_{R\rightarrow+\infty}\frac{-\log\left(  \mathrm{vol}_{f}\left(  M \right)
-\mathrm{vol}_{f}\left(  B_{R}\right)  \right)  }
{R}\geq\inf\sigma
_{ess}\left(  -\Delta_{f}^{M}\right)  \geq\lambda_{1}\left( - \Delta_{f}^{M}\right)
\geq0.
\]

\end{enumerate}
\end{theorem}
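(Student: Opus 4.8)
The plan is to adapt Brooks' original test-function argument to the weighted setting, exploiting the decomposition principle of Theorem~\ref{decomposition_thm} to reduce matters to a neighborhood of infinity. The key point throughout is that the variational characterization of $\lambda_1$ uses the $f$-Dirichlet form $Q_f(u)=\int_M|\nabla u|^2\,d\mathrm{vol}_f$, so all integrals are taken with respect to $d\mathrm{vol}_f$, and the relevant volume growth is the $f$-volume growth. The heart of the matter is to produce, for each $\varepsilon>0$, a compactly supported Lipschitz function $\varphi$ supported in $M\setminus\overline{B_{R}}$ for $R$ large, whose Rayleigh quotient $Q_f(\varphi)/\|\varphi\|_{L^2(d\mathrm{vol}_f)}^2$ is bounded by $\mu+\varepsilon$, where $\mu$ denotes the right-hand side $\limsup$-quantity. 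By Theorem~\ref{decomposition_thm} this bounds $\lambda_1(-\Delta_f^{M\setminus\overline{B_R}})$ and hence, taking $\Omega=B_R$ and letting $R\to\infty$ (or rather observing the bound holds uniformly), it bounds $\inf\sigma_{ess}(-\Delta_f^M)$. The last two inequalities $\inf\sigma_{ess}\ge\lambda_1\ge0$ are immediate from generalities: $\lambda_1$ is the bottom of the whole spectrum and the operator is non-negative, and $\sigma_{ess}\subset\sigma$.

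For case (a), when $\mathrm{vol}_f(M)=+\infty$, I would take the standard choice of a radial ``tent'' function built from $r(x)=d(x,o)$: on the annulus-type regions one sets $\varphi$ to interpolate between values at radii $R_0<R_1<\cdots$ chosen so that $\log\mathrm{vol}_f(B_{R_{k+1}})-\log\mathrm{vol}_f(B_{R_k})$ is controlled, i.e. one uses the hypothesis
\[
\limsup_{R\to\infty}\frac{\log\mathrm{vol}_f(B_R)}{R}=\mu
\]
to choose, for any $\alpha>\mu$, a sequence $R_k\to\infty$ with $\mathrm{vol}_f(B_{R_k})\le C e^{\alpha R_k}$. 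One then builds a function supported far out that grows like $e^{-\alpha r/2}$ or its Brooks-type analogue, computes $|\nabla\varphi|$ (which is essentially $(\alpha/2)\varphi$ plus boundary interpolation terms), and checks that the ratio of the $f$-weighted energy to the $f$-weighted $L^2$-norm is at most $(\alpha/2)^2+o(1)$; actually Brooks' argument gives the cleaner bound $\alpha$ itself via an averaging-over-scales trick (integrating the quotient bound over a one-parameter family of tent functions and using the mean value theorem). Since $\alpha>\mu$ is arbitrary, this yields $\inf\sigma_{ess}(-\Delta_f^M)\le\mu$. I would remark that this is a verbatim transcription of the unweighted proof with $d\mathrm{vol}$ replaced by $d\mathrm{vol}_f=e^{-f}d\mathrm{vol}$, since the only structural facts used are the coarea formula and the variational characterization, both of which hold with the weighted measure.

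For case (b), when $\mathrm{vol}_f(M)<+\infty$, the natural modification is to replace $\mathrm{vol}_f(B_R)$ by the ``tail'' $\mathrm{vol}_f(M)-\mathrm{vol}_f(B_R)=\mathrm{vol}_f(M\setminus B_R)$, which now tends to $0$; the hypothesis controls how fast. Here the test functions should be supported near infinity and built to capture this decaying tail mass, again interpolating on dyadic-type annuli determined by the decay rate of $\mathrm{vol}_f(M\setminus B_R)$, and the same energy estimate gives the bound $\limsup_{R\to\infty}\frac{-\log(\mathrm{vol}_f(M)-\mathrm{vol}_f(B_R))}{R}$. The main obstacle---though it is more bookkeeping than genuine difficulty---is the careful choice of the interpolation radii and the verification that the cross terms coming from $|\nabla\varphi|$ on the transition annuli do not spoil the estimate; this is exactly where Brooks' averaging trick earns its keep, and I would simply cite \cite{brooks1}, \cite{brooks2} and \cite{volpi-masterthesis} for the details, indicating that the weighted case requires no new ideas beyond systematically carrying the factor $e^{-f}$ through the computation. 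I expect the write-up to be short, essentially ``the proof of the unweighted statement applies mutatis mutandis, using Theorem~\ref{decomposition_thm}.''
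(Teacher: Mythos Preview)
Your proposal is correct and matches the paper's treatment exactly: the paper does not give a proof of this theorem at all, but simply states it as a generalization of Brooks' results \cite{brooks1}, \cite{brooks2} with a reference to \cite{volpi-masterthesis}, precisely in the spirit of your concluding remark that ``the proof of the unweighted statement applies mutatis mutandis.'' Your sketch of the test-function construction and use of the decomposition principle (Theorem~\ref{decomposition_thm}) is the standard Brooks argument, and there is nothing to add.
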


The following Barta-type lower estimate for
$\lambda_{1}(-\Delta_{f}^\Omega)$ is a weighted version of a result in \cite{BeMo}.
Its proof is obtained following exactly the arguments in \cite{BeMo}  using
a weighted version of the divergence theorem.

\begin{theorem}
Let $M_{f}$ be a weighted manifold and let
$\Omega\subset M$ be a domain. Then, for every vector field $X$ on $\Omega$%
\[
\lambda_{1}\left(  -\Delta_{f}^\Omega\right)  \geq\inf_\Omega \left\{
\operatorname{div}_{f}\left(  X\right)  -\left\vert X\right\vert ^{2}\right\}
.
\]
The equality holds if $\Omega$ is a compact domain with smooth boundary.
\end{theorem}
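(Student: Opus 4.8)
The plan is to prove the Barta-type lower bound
\[
\lambda_{1}\left(-\Delta_{f}^{\Omega}\right)\geq\inf_{\Omega}\left\{\operatorname{div}_{f}(X)-|X|^{2}\right\}
\]
by testing the variational characterization of $\lambda_{1}$ against an arbitrary $u\in C^{\infty}_{c}(\Omega)$ and integrating by parts with the weighted divergence theorem. First I would recall that for the weighted measure $d\mathrm{vol}_{f}=e^{-f}\,d\mathrm{vol}$ one has the divergence formula $\int_{\Omega}\operatorname{div}_{f}(Y)\,d\mathrm{vol}_{f}=0$ for every compactly supported vector field $Y$ on $\Omega$, since $\operatorname{div}_{f}(Y)=e^{f}\operatorname{div}(e^{-f}Y)$ and $e^{-f}Y$ is compactly supported. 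Applying this to $Y=u^{2}X$ and using the product rule $\operatorname{div}_{f}(u^{2}X)=u^{2}\operatorname{div}_{f}(X)+2u\,\langle\nabla u,X\rangle$ gives
\[
\int_{\Omega}u^{2}\operatorname{div}_{f}(X)\,d\mathrm{vol}_{f}=-2\int_{\Omega}u\,\langle\nabla u,X\rangle\,d\mathrm{vol}_{f}.
\]

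Next I would estimate the right-hand side pointwise using Cauchy–Schwarz and Young's inequality: $2u\langle\nabla u,X\rangle\geq -|\nabla u|^{2}-u^{2}|X|^{2}$. Combining this with the previous identity yields
\[
\int_{\Omega}u^{2}\bigl(\operatorname{div}_{f}(X)-|X|^{2}\bigr)\,d\mathrm{vol}_{f}\leq\int_{\Omega}|\nabla u|^{2}\,d\mathrm{vol}_{f}=Q_{f}(u).
\]
Setting $c=\inf_{\Omega}\{\operatorname{div}_{f}(X)-|X|^{2}\}$ (which we may assume finite, otherwise there is nothing to prove), the left side is bounded below by $c\int_{\Omega}u^{2}\,d\mathrm{vol}_{f}$, so $Q_{f}(u)\geq c\int_{\Omega}u^{2}\,d\mathrm{vol}_{f}$ for every $u\in C^{\infty}_{c}(\Omega)$. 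Dividing by $\int_{\Omega}u^{2}\,d\mathrm{vol}_{f}$ and taking the infimum over such $u$ in the variational characterization of $\lambda_{1}(-\Delta_{f}^{\Omega})$ gives the desired inequality.

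For the equality statement when $\Omega$ is a compact domain with smooth boundary, the strategy is to choose $X$ optimally. Let $\varphi>0$ be the first Dirichlet eigenfunction of $-\Delta_{f}^{\Omega}$, normalized and smooth up to $\partial\Omega$ with $\varphi=0$ on $\partial\Omega$, and set $X=\nabla\log\varphi=\nabla\varphi/\varphi$ on the interior. Then $\operatorname{div}_{f}(X)=\Delta_{f}\varphi/\varphi-|\nabla\varphi|^{2}/\varphi^{2}=-\lambda_{1}-|X|^{2}$, so $\operatorname{div}_{f}(X)-|X|^{2}=-\lambda_{1}-2|X|^{2}$; this is not constant, so a direct substitution does not immediately give equality, and one must instead argue by approximation, which is the main obstacle. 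The cleaner route is to observe that the inequality becomes an equality asymptotically: one takes a sequence of vector fields $X_{k}$ built from $\nabla\log(\varphi+\tfrac1k)$ or from mollified/truncated versions of $\nabla\log\varphi$, for which the pointwise Young inequality is saturated by $u=\varphi$ in the limit, and the boundary terms in the weighted divergence theorem—which now genuinely appear since $X$ is only defined on the interior and $\varphi$ does not have compact support—are controlled using $\varphi=0$, $|\nabla\varphi|$ bounded on $\partial\Omega$, and the Hopf boundary estimates. I expect the delicate point to be precisely this boundary/integrability bookkeeping: showing that $\int_{\Omega}|\nabla\varphi|^{2}\varphi^{-1}$-type quantities and the flux of $\varphi\nabla\log\varphi$ through level sets of $\varphi$ behave well enough that the chain of inequalities collapses to $\lambda_{1}$ in the limit. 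The remaining steps—the product rule, Cauchy–Schwarz, and the variational characterization—are routine.
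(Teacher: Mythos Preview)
Your argument for the inequality is correct and is exactly the approach the paper intends (it merely cites \cite{BeMo} and says to replace the divergence theorem by its weighted version).

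For the equality case, however, you have a sign error that sends you down an unnecessary path. With $X=\nabla\log\varphi$ you correctly compute $\operatorname{div}_{f}(X)-|X|^{2}=-\lambda_{1}-2|X|^{2}$, which is useless. The right choice is $X=-\nabla\log\varphi=-\nabla\varphi/\varphi$. Then
\[
\operatorname{div}_{f}(X)=-\frac{\Delta_{f}\varphi}{\varphi}+\frac{|\nabla\varphi|^{2}}{\varphi^{2}}=\lambda_{1}+|X|^{2},
\]
so $\operatorname{div}_{f}(X)-|X|^{2}\equiv\lambda_{1}$ on the interior of $\Omega$. Since $\varphi>0$ there, $X$ is a smooth vector field on the open set $\Omega$ (which is all the statement requires; test functions $u\in C^{\infty}_{c}(\Omega)$ have support in the interior), and the infimum is exactly $\lambda_{1}$. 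No approximation, no boundary bookkeeping, no Hopf lemma, and no control of $\int|\nabla\varphi|^{2}\varphi^{-1}$ is needed: the equality case is a one-line computation once the sign is fixed. This is the argument in \cite{BeMo} transported to the weighted setting.
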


A classical consequence is represented by the next
\begin{corollary} Let $M_{f}$ be a weighted manifold and let
$\Omega\subset M$ be a domain. Then, for every domain $\Omega$ and for every
$0<u\in C^{2}\left(  \Omega\right)  $,%
\[
\lambda_{1}\left(  -\Delta_{f}^\Omega\right)  \geq\inf_{\Omega}\left(
-\frac{\Delta_{f}u}{u}\right)  .
\]

\end{corollary}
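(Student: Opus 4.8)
The plan is to deduce this corollary directly from the preceding Barta-type theorem by exhibiting a suitable vector field built from the given positive function $u$. Given $0<u\in C^{2}(\Omega)$, I would set
\[
X=-\frac{\nabla u}{u}=-\nabla(\log u).
\]
This is a smooth vector field on $\Omega$ since $u>0$ there, so it is an admissible test field in the Barta-type estimate.

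Next I would compute the quantity $\operatorname{div}_{f}(X)-|X|^{2}$ appearing in the theorem. Using the product rule for the weighted divergence, namely $\operatorname{div}_{f}(\varphi Y)=\varphi\operatorname{div}_{f}(Y)+\langle\nabla\varphi,Y\rangle$ with $\varphi=1/u$ and $Y=-\nabla u$, together with $\operatorname{div}_{f}(\nabla u)=\Delta_{f}u$, one gets
\[
\operatorname{div}_{f}(X)=\operatorname{div}_{f}\!\left(-\frac{\nabla u}{u}\right)=-\frac{\Delta_{f}u}{u}+\frac{|\nabla u|^{2}}{u^{2}}.
\]
On the other hand $|X|^{2}=|\nabla u|^{2}/u^{2}$, so the two gradient-square terms cancel and
\[
\operatorname{div}_{f}(X)-|X|^{2}=-\frac{\Delta_{f}u}{u}.
\]
Taking the infimum over $\Omega$ and invoking the Barta-type theorem gives
\[
\lambda_{1}(-\Delta_{f}^{\Omega})\geq\inf_{\Omega}\{\operatorname{div}_{f}(X)-|X|^{2}\}=\inf_{\Omega}\left(-\frac{\Delta_{f}u}{u}\right),
\]
which is the assertion.

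I do not expect any real obstacle here: the only point requiring a line of care is the weighted Leibniz identity $\operatorname{div}_{f}(\varphi Y)=\varphi\operatorname{div}_{f}(Y)+\langle\nabla\varphi,Y\rangle$, which follows immediately from the definition $\operatorname{div}_{f}=e^{f}\operatorname{div}(e^{-f}\,\cdot\,)$ and the ordinary product rule, and the observation that $X=-\nabla u/u$ is genuinely $C^{1}$ on $\Omega$ because $u$ is positive and $C^{2}$ there, so that it qualifies as a competitor in the variational inequality of the previous theorem. Everything else is the algebraic cancellation displayed above.
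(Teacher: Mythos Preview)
Your argument is correct and is precisely the classical deduction the paper has in mind: the corollary is stated without proof as ``a classical consequence'' of the preceding Barta-type theorem, and the standard way to obtain it is exactly your choice $X=-\nabla u/u=-\nabla\log u$, for which the identity $\operatorname{div}_{f}(X)-|X|^{2}=-\Delta_{f}u/u$ follows from the weighted Leibniz rule as you indicate.
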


In particular, recalling Theorem \ref{decomposition_thm}, we deduce
\begin{corollary}\label{cor_essspec}
Let $M_{f}$ be a weighted manifold. Then, for every domain $\Omega
\subset\subset M$ and for every $0<u\in C^{2}\left(  M\backslash\Omega\right)
$, it holds%
\[
\inf\sigma_{ess}\left(  -\Delta_{f}^{M}\right)  \geq\inf_{M\backslash\Omega
}\left(  -\frac{\Delta_{f}u}{u}\right)  .
\]

\end{corollary}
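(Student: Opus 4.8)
The plan is to combine the two immediately preceding results. By Corollary~\ref{cor_essspec} applied to the positive function $u\in C^{2}(M\setminus\Omega)$, we already have
\[
\inf\sigma_{ess}\bigl(-\Delta_{f}^{M}\bigr)\geq\inf_{M\setminus\Omega}\Bigl(-\frac{\Delta_{f}u}{u}\Bigr),
\]
which is exactly the desired inequality. So at the level of pure logic the statement is already contained in what precedes it; the task is really to record it in the form that is most convenient for later geometric applications, namely with the essential spectrum of $-\Delta_f^M$ on the left and a pointwise quotient $-\Delta_f u/u$ on a neighbourhood of infinity on the right.

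Nonetheless, to make the argument self-contained I would run through the two-step chain explicitly. First I would invoke the Barta-type Corollary preceding Corollary~\ref{cor_essspec}: for the domain $\Omega_0:=M\setminus\overline{\Omega}$ and the given $0<u\in C^{2}(M\setminus\Omega)\supset C^2(\Omega_0)$, one has
\[
\lambda_{1}\bigl(-\Delta_{f}^{\Omega_0}\bigr)\geq\inf_{\Omega_0}\Bigl(-\frac{\Delta_{f}u}{u}\Bigr)\geq\inf_{M\setminus\Omega}\Bigl(-\frac{\Delta_{f}u}{u}\Bigr),
\]
the second inequality being trivial since $\Omega_0\subset M\setminus\Omega$. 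Second, I would apply the decomposition principle, Theorem~\ref{decomposition_thm}, which gives
\[
\inf\sigma_{ess}\bigl(-\Delta_{f}^{M}\bigr)=\sup_{\Omega'\subset\subset M}\lambda_{1}\bigl(-\Delta_{f}^{M\setminus\overline{\Omega'}}\bigr)\geq\lambda_{1}\bigl(-\Delta_{f}^{M\setminus\overline{\Omega}}\bigr),
\]
and then chain the two displays together to conclude.

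There is essentially no obstacle here: both ingredients are stated earlier in the excerpt, and the only thing to be careful about is bookkeeping of which open set the various $\lambda_1$'s and $\mathrm{div}_f$-infima live on, and that $u$ is required to be positive and $C^2$ only on the exterior region $M\setminus\Omega$, which is precisely the domain on which the Barta estimate is being used. If one wanted, the whole thing could be compressed to a single sentence ("Apply Corollary~\ref{cor_essspec}"), but writing out the Barta step and the decomposition step separately makes the mechanism transparent and shows where the hypothesis $\Omega\subset\subset M$ is used (it is what makes the decomposition principle applicable).
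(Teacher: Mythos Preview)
Your argument is correct and matches the paper's approach exactly: the paper simply says ``recalling Theorem~\ref{decomposition_thm}, we deduce'' the corollary from the preceding Barta-type estimate, and your second and third paragraphs spell out precisely this two-step chain (Barta on $M\setminus\overline\Omega$, then the decomposition principle). One cosmetic fix: in your opening line you write ``By Corollary~\ref{cor_essspec}\ldots'', which is a circular reference to the very statement you are proving; you mean the unnamed Barta corollary immediately preceding it.
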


The following version of the classical Cheng eigenvalue comparison was pointed out in \cite{Se}.

\begin{theorem}\label{th_cheng-alberto}
Assume that the complete weighted manifold
$M_{f}$ satisfies%
\[
Ric_{f}\geq-\alpha, \quad\text{and} \quad |\nabla f|\leq \beta^{1/2}
\]
for some $\alpha,\, \beta\geq 0$. Then%
\[
\lambda_{1}\left(-\Delta_{f}^{B_{R}}\right)
\leq\lambda_{1}\left(
-\Delta^{\mathbb{B}_{R}^{m+1}}\right)  ,
\]
where $\mathbb{B}_{R}^{m+1}$ is the ball of radius $R>0$ in the $\left(
m+1\right)  $-dimensional spaceform $\mathbb{M}^{m+1}\left(  (\alpha +\beta)
/m\right)  $ of constant curvature $(\alpha +\beta)/ m$.
\end{theorem}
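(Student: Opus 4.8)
The plan is to reduce the weighted eigenvalue comparison to the classical Cheng comparison on a spaceform of dimension $m+1$, using the unitary equivalence between $\Delta_f$ and a Schr\"odinger operator together with the Qian-type comparison for $\Delta_f r$ proved in Theorem~\ref{qian-comparison}. First I would recall that by the variational characterization, $\lambda_1(-\Delta_f^{B_R})$ is computed by minimizing the Rayleigh quotient $\int_{B_R}|\nabla u|^2\,d\mathrm{vol}_f\big/\int_{B_R}|u|^2\,d\mathrm{vol}_f$ over $u\in C^\infty_c(B_R)$; the strategy is to plug in a radial test function $u(x)=\varphi(r(x))$, where $\varphi$ is the first Dirichlet eigenfunction of the model ball.

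The key computational step is the following. Under the hypotheses $Ric_f\ge -\alpha$ and $|\nabla f|\le \beta^{1/2}$, one has (by the argument behind Theorem~\ref{qian-comparison}, or directly from Qian's Bochner-type computation) the estimate
\[
\Delta_f r(x)\le m\,\frac{\mathrm{sn}_\kappa'(r(x))}{\mathrm{sn}_\kappa(r(x))}\quad\text{on } B_R\setminus\mathrm{cut}(o),
\]
where $\kappa=(\alpha+\beta)/m$ and $\mathrm{sn}_\kappa$ is the solution of $\mathrm{sn}_\kappa''+\kappa\,\mathrm{sn}_\kappa=0$ with $\mathrm{sn}_\kappa(0)=0$, $\mathrm{sn}_\kappa'(0)=1$; this is precisely the radial Laplacian of the geodesic ball $\mathbb{B}_R^{m+1}$ in the spaceform $\mathbb{M}^{m+1}(\kappa)$. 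Next I would take $\varphi$ to be the first Dirichlet eigenfunction of $\mathbb{B}_R^{m+1}$, which is radial, positive in the interior, and can be chosen non-increasing in $r$ (so $\varphi'\le 0$), extended by $0$ outside $B_R$. Setting $u=\varphi\circ r$, a standard computation using the co-area formula and integration by parts converts the Rayleigh quotient of $u$ on $M_f$ into an integral in the single variable $r$ against the weight $J_f(r):=\int_{\partial B_r}e^{-f}\,d\mathrm{area}$; the inequality $\varphi'\le 0$ together with the $\Delta_f r$ estimate above lets one bound this one-dimensional quotient by $\lambda_1(-\Delta^{\mathbb{B}_R^{m+1}})$. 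One has to handle the cut locus, which is done as usual by working on $B_R\setminus\mathrm{cut}(o)$ and noting that $\mathrm{cut}(o)$ has measure zero and that the relevant distributional inequalities only improve across it (the Calabi trick / the fact that $r$ is superharmonic in the barrier sense along the cut locus).

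The main obstacle I anticipate is the bookkeeping that turns the genuinely $m$-dimensional weighted integral on $M_f$ into the $(m+1)$-dimensional model computation: the shift $m\mapsto m+1$ is exactly what absorbs the extra term $|\nabla f|\le\beta^{1/2}$ in the curvature bound, and one must check that the one-dimensional comparison between the profile $J_f(r)$ on $M$ and the profile $\mathrm{sn}_\kappa(r)^m$ on the model goes in the right direction after multiplying by $\varphi^2$ (respectively $\varphi'^2$) and that the sign of $\varphi'$ is used consistently. Concretely, the identity $\frac{d}{dr}\log J_f(r)=\fint_{\partial B_r}\Delta_f r$ (in the appropriate weak sense) combined with the upper bound on $\Delta_f r$ gives $(\log J_f)'\le m(\log\mathrm{sn}_\kappa)'$, hence $J_f(r)/J_f(r_0)\le(\mathrm{sn}_\kappa(r)/\mathrm{sn}_\kappa(r_0))^m$ for $r\ge r_0$; feeding this, together with $\varphi'\le0$, into the two one-dimensional integrals and applying the reverse inequality where $\varphi$ itself (rather than $\varphi'$) appears is the delicate point, and is where the choice of a monotone model eigenfunction is essential. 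Once this is in place, the chain $\lambda_1(-\Delta_f^{B_R})\le \text{(Rayleigh quotient of }u)\le \lambda_1(-\Delta^{\mathbb{B}_R^{m+1}})$ closes the argument.
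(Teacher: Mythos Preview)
The paper does not supply its own proof of this statement; it is recorded as ``pointed out in \cite{Se}''. Your outline is the classical Cheng transplantation argument adapted to the weighted setting, which is the method of \cite{Se}, so the overall plan is correct.

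Two comments. First, a sign slip: under $Ric_f\ge-\alpha$ and $|\nabla f|\le\beta^{1/2}$ the comparison model is the $(m+1)$-dimensional \emph{hyperbolic} space of curvature $-(\alpha+\beta)/m$ (see the application in Proposition~\ref{prop 40}, where the paper itself writes $\mathbb{H}^{m+1}$). Thus your warping function should satisfy $g''-\kappa g=0$ with $\kappa=(\alpha+\beta)/m$, as in Theorem~\ref{qian-comparison}~(ii), not $\mathrm{sn}_\kappa''+\kappa\,\mathrm{sn}_\kappa=0$. The phrasing of the theorem in the paper is admittedly ambiguous on this point.

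Second, the ``delicate point'' you flag is genuine, and monotonicity of $\varphi$ alone does not resolve it: the profile inequality $J_f(r)/J_f(r_0)\le(g(r)/g(r_0))^m$ goes the right way for the numerator $\int\varphi'^2 J_f$ but the wrong way for the denominator $\int\varphi^2 J_f$, so separate comparison of the two integrals does not close. The clean resolution is not to isolate $J_f$ at all. From $\varphi'\le0$ and $\Delta_f r\le m\,g'/g$ one has pointwise (in the barrier sense across the cut locus)
\[
\Delta_f u=\varphi''+(\Delta_f r)\,\varphi'\ge\varphi''+m\frac{g'}{g}\,\varphi'=-\lambda_1\bigl(-\Delta^{\mathbb{B}_R^{m+1}}\bigr)\,u,
\]
using that $\varphi$ satisfies the model eigenvalue ODE. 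Multiplying by $u\ge0$ and integrating against $d\mathrm{vol}_f$ gives $\int|\nabla u|^2\,d\mathrm{vol}_f\le\lambda_1(-\Delta^{\mathbb{B}_R^{m+1}})\int u^2\,d\mathrm{vol}_f$ directly. Equivalently, in your one-dimensional picture, integrate $\int_0^R\varphi'^2 J_f\,dr$ by parts and use $J_f'\le m(g'/g)J_f$ together with $-\varphi\varphi'\ge0$; the model ODE for $\varphi$ then produces the bound with the \emph{same} weight $J_f$ on both sides, and the profile comparison is never needed.
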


\subsection{Essential spectrum and stochastic properties}

The purpose of this section is to understand possible connections between the
structure of the spectrum of the Laplacian and the stochastic
properties of noncompact Riemannian manifold, namely, stochastic (in)completeness
and Feller propery. The starting point of the investigation is
represented by the (proof of the) following recent result, \cite{BeJoMo}, which answers in the affirmative a question raised by
S.T. Yau.

\begin{theorem}
\label{th_BJM}Let $M$ be a geodesically complete manifold which admits a
proper minimal immersion $f:M\rightarrow\mathbb{B}_{R}\left(  0\right)  $ into
an open ball $\mathbb{B}_{R}\left(  0\right)  \subset\mathbb{R}^{N}%
\mathbb{\ }$. Then $\sigma_{ess}\left(  -\Delta^M\right)  =\emptyset$.
\end{theorem}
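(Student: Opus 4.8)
The plan is to deduce Theorem~\ref{th_BJM} from the abstract spectral estimate in Corollary~\ref{cor_essspec} together with the bounded-immersion trick already used for the mean curvature estimate. The key point is that a \emph{minimal} immersion $f\colon M\to\mathbb{B}_R(0)$ produces a nice positive function to feed into Corollary~\ref{cor_essspec}: namely $u(x)=R^2-|f(x)|^2>0$ on all of $M$. Indeed, for an isometric immersion into $\mathbb{R}^N$ with mean curvature $\mathbf H$ one has the classical Jacobi-type identity $\Delta|f|^2=2m+2\langle f,\mathbf H\rangle$, where $m=\dim M$; when the immersion is minimal this reduces to $\Delta|f|^2=2m$, hence $\Delta u=-2m$ on $M$. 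Therefore
\[
-\frac{\Delta u}{u}=\frac{2m}{R^2-|f(x)|^2}\geq \frac{2m}{R^2}>0
\]
at every point of $M$, with strict positivity and in fact a uniform lower bound.

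Next I would apply Corollary~\ref{cor_essspec} (in the unweighted case $f\equiv 0$, so $\Delta_f=\Delta$) with this choice of $u$. For any relatively compact domain $\Omega\subset\subset M$ the function $u$ is a positive $C^2$ function on $M\setminus\Omega$ (it is in fact defined and positive on all of $M$), so the corollary gives
\[
\inf\sigma_{\mathrm{ess}}(-\Delta^M)\geq \inf_{M\setminus\Omega}\Bigl(-\frac{\Delta u}{u}\Bigr)=\inf_{M\setminus\Omega}\frac{2m}{R^2-|f(x)|^2}\geq \frac{2m}{R^2}.
\]
This alone only shows the essential spectrum is bounded below by $2m/R^2$; to get emptiness I would exploit that $f$ is \emph{proper}, i.e.\ $|f(x)|\to R$ as $x\to\infty$ in $M$. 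Fix $\varepsilon>0$ and, using properness, choose $\Omega$ relatively compact and large enough that $R^2-|f(x)|^2<\varepsilon$ on $M\setminus\Omega$. Then the same computation yields
\[
\inf\sigma_{\mathrm{ess}}(-\Delta^M)=\sup_{\Omega\subset\subset M}\lambda_1(-\Delta^{M\setminus\overline\Omega})\geq \inf_{M\setminus\Omega}\frac{2m}{R^2-|f(x)|^2}\geq \frac{2m}{\varepsilon}.
\]
Letting $\varepsilon\to 0$ forces $\inf\sigma_{\mathrm{ess}}(-\Delta^M)=+\infty$, which is precisely the statement that $\sigma_{\mathrm{ess}}(-\Delta^M)=\emptyset$, i.e.\ the spectrum is purely discrete.

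The only real subtlety I anticipate is a regularity/analytic point: Corollary~\ref{cor_essspec} is stated for $0<u\in C^2(M\setminus\Omega)$, and here $u$ is globally smooth and positive, so there is nothing to check — the hypothesis is comfortably met. One should note that completeness of $M$ is used (it guarantees $-\Delta^M$ is essentially self-adjoint on $C_c^\infty(M)$, so that $\sigma_{\mathrm{ess}}$ and the decomposition principle behave as in Theorem~\ref{decomposition_thm} and Corollary~\ref{cor_essspec}), and that properness of the immersion is exactly what converts the fixed lower bound $2m/R^2$ into a bound tending to infinity. Thus the argument is really just: minimality gives $\Delta u=-2m$, properness gives $u\to 0$ at infinity, and Barta/Cheng-type lower bound for $\lambda_1$ on exterior domains does the rest. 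No hard estimate is needed beyond the classical formula for $\Delta|f|^2$.
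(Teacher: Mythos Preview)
Your proposal is correct and is essentially the same argument as the paper's: both use the test function $u=R^2-|f|^2$, the identity $\Delta|f|^2=2m$ for a minimal immersion, and properness of $f$ to make $u$ arbitrarily small off a compact set, then apply Barta's estimate (your Corollary~\ref{cor_essspec}, the paper's decomposition principle plus Barta) to conclude $\inf\sigma_{\mathrm{ess}}=+\infty$. The only cosmetic difference is that the paper takes the explicit exhaustion $\Omega_j=\{|f|^2\le R^2-1/j\}$ in place of your $\varepsilon$-choice of $\Omega$.
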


\begin{proof}
According to the decomposition principle,%
\[
\inf\sigma_{ess}\left(  -\Delta^M\right)  =\sup_{\substack{\Omega_{j}\text{
cpt}\\\Omega_{j}\nearrow M}}\lambda_{1}\left(  -\Delta^{M\backslash\Omega_{j}}\right)
,
\]
where, by Barta theorem,%
\[
\lambda_{1}\left( -\Delta^{M\backslash\Omega_{j}}\right)  \geq\sup\inf_{M\backslash
\Omega_{j}}\left(  -\frac{\Delta v}{v}\right)  ,
\]
the supremum being taken with respect to all smooth (say $C^{2}$) functions
$v>0$ on $M\backslash\Omega_{j}$. In particular, choosing%
\[
\Omega_{j}=\left\{  x\in M:\left\vert f\left(  x\right)  \right\vert ^{2}\leq
R^{2}-\frac{1}{j}\right\}  \subset\subset M
\]
and%
\[
v\left(  x\right)  =R^{2}-\left\vert f\left(  x\right)  \right\vert ^{2}>0
\]
gives%
\[
\inf\sigma_{ess}\left( \Delta^M\right)  \geq\lambda_{1}\left( \Delta^{ M\backslash\Omega
_{j}}\right)  \geq\frac{2m}{1/j}\rightarrow+\infty\text{, as }j\rightarrow
+\infty.
\]

\end{proof}

\begin{remark}
\rm{
In the  assumptions of Theorem \ref{th_BJM},  $M$ is stochastically incomplete.
Indeed%
\[
u\left(  x\right)  =\left\vert f\left(  x\right)  \right\vert ^{2}%
\]
is bounded and satisfies%
\[
\Delta u=2m,
\]
thus proving that $u$ violates the weak maximum principle at infinity (in the
terminology of \cite{BeBa}, $u$ is a woymp violating function). By
the maximum principle characterization of stochastic completeness, it follows
that $M$ is stochastically incomplete, as claimed. Moreover, $\ 0\leq u\left(
x\right)  <\sup_{M}u=R^{2}<+\infty$ and $\Omega_{\delta}=\left\{  x\in
M:-\infty<u\left(  x\right)  \leq R^{2}-\delta\right\}  $ defines a compact
exhaustion of $M$, that is, $u:M\rightarrow\lbrack0,R^{2})$ is a proper
function. We are going to prove that these ingredients suffice to conclude the
discreteness of the spectrum, thus establishing an abstract and generalized
version of the main Theorem in \ref{th_BJM}.
}
\end{remark}

\begin{definition}
Say that a function $u:M\rightarrow\left(  -\infty,u^{\ast}\right)  $,
$u^{\ast}<+\infty$, is a bounded exhaustion function if, for every $\delta>0$, the
set%
\[
\Omega_{\delta}=\left\{  x\in M:-\infty<u\left(  x\right)  \leq u^{\ast
}-\delta\right\}
\]
is compact and $\Omega_{\delta}\nearrow M$ as $\delta\rightarrow0$. Note that,
in case $M$ is noncompact, necessarily, $u^{\ast}=\sup_{M}u$.
\end{definition}

\begin{definition}
Let $M_{f}$ be a weighted manifold. A woymp violating function for the
$f$-Laplacian, is a $C^{2}$ function $u:M\rightarrow\mathbb{R}$ satisfying
$\sup_{M}u=u^{\ast}<+\infty$ \ such that, for any sequence $\left\{
x_{k}\right\}  $ along which $u\left(  x_{k}\right)  \rightarrow u^{\ast}$, it
holds $\lim\sup_{k\rightarrow+\infty}\Delta_{f}u\left(  x_{k}\right)  >0$.
\end{definition}

\begin{theorem}
\label{th_ess-spec}Let $M_{f}$ be a ($f$-stochastically incomplete) noncompact
weighted manifold. If $M_{f}$ supports a woymp violating exhaustion function
then
\[\sigma_{ess}\left( -\Delta_{f}^{M}\right)  =\emptyset.\]
\end{theorem}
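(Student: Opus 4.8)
The plan is to mimic the proof of Theorem~\ref{th_BJM} but replace the geometric exhaustion $\Omega_j = \{|f|^2 \le R^2 - 1/j\}$ by the sublevel sets of the abstract woymp violating exhaustion function, and to replace $v = R^2 - |f|^2$ by the analogous superlevel function built from $u$. First I would invoke the decomposition principle (Theorem~\ref{decomposition_thm}), which reduces the claim $\sigma_{ess}(-\Delta_f^M) = \emptyset$ to showing that
\[
\sup_{\Omega \subset\subset M} \lambda_1\!\left(-\Delta_f^{M\setminus\overline\Omega}\right) = +\infty .
\]
Then I would feed into this the Barta-type estimate of Corollary~\ref{cor_essspec}: for any $\Omega \subset\subset M$ and any $0 < v \in C^2(M\setminus\Omega)$,
\[
\lambda_1\!\left(-\Delta_f^{M\setminus\overline\Omega}\right) \ge \inf_{M\setminus\Omega}\left(-\frac{\Delta_f v}{v}\right).
\]
So it suffices to produce, for each large $N$, a compact set $\Omega$ and a positive $C^2$ function $v$ on $M\setminus\Omega$ with $-\Delta_f v / v \ge N$ there.

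The natural choice is $v = u^* - u$, which is positive everywhere on $M$ (recall $u < u^*$ since $u$ is a bounded exhaustion function, hence does not attain its sup), and $\Delta_f v = -\Delta_f u$. The key point is to extract from the woymp-violating hypothesis a quantitative lower bound on $-\Delta_f v = \Delta_f u$ near the top level set. Concretely, I claim that for every $c > 0$ there exists $\delta > 0$ such that $\Delta_f u(x) \ge$ (something comparable to $u^*-u(x)$ times a large constant) on the ``collar'' $\{u^* - \delta < u < u^*\} = M \setminus \Omega_\delta$. The cleanest way to see this: suppose not; then there is $c_0 > 0$ and a sequence $\delta_k \to 0$ and points $x_k$ with $u^* - \delta_k < u(x_k)$ but $\Delta_f u(x_k)$ small; since $u(x_k) \to u^*$ this contradicts the definition of a woymp violating function (which forces $\limsup_k \Delta_f u(x_k) > 0$ along \emph{any} sequence approaching $u^*$). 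Made quantitative: there exist $c_0 > 0$ and $\delta_0 > 0$ so that $\Delta_f u(x) \ge c_0$ for all $x$ with $u(x) > u^* - \delta_0$; otherwise one builds, by choosing points with $u(x_k) \to u^*$ and $\Delta_f u(x_k) \le 1/k$, a sequence violating the woymp-violating property. Then on $M \setminus \Omega_{\delta}$ with $\delta \le \delta_0$ we have
\[
-\frac{\Delta_f v}{v} = \frac{\Delta_f u}{u^* - u} \ge \frac{c_0}{u^* - u} \ge \frac{c_0}{\delta},
\]
using $u^* - u < \delta$ on that collar. Hence $\lambda_1(-\Delta_f^{M\setminus\overline{\Omega_\delta}}) \ge c_0/\delta$, and since $\Omega_\delta$ is compact and $\Omega_\delta \nearrow M$, letting $\delta \to 0^+$ gives $\sup_\Omega \lambda_1(-\Delta_f^{M\setminus\overline\Omega}) = +\infty$, as required.

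The main obstacle is the quantitative step: the hypothesis only asserts $\limsup \Delta_f u(x_k) > 0$ along each sequence $x_k \to u^*$, with the value of the $\limsup$ possibly depending on the sequence, so I must argue that the infimum of $\Delta_f u$ over the collar $\{u > u^* - \delta_0\}$ is nonetheless bounded below by a \emph{positive} constant. This is exactly a compactness-type argument as sketched above, and it is where the properness of $u$ (i.e. the exhaustion property) is used — without it a minimizing sequence for $\Delta_f u$ on the collar could escape to infinity in a way not controlled by the woymp-violating condition. One should double-check that the sublevel sets $\Omega_\delta$ are suitable as the relatively compact domains in the decomposition principle (they are compact by assumption, and have the required monotonicity); minor regularity of $\partial \Omega_\delta$ is not needed since the decomposition principle and Corollary~\ref{cor_essspec} are stated for arbitrary relatively compact $\Omega$. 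Everything else is a direct transcription of the proof of Theorem~\ref{th_BJM}.
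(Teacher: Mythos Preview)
Your proof is correct and follows essentially the same route as the paper: set $v=u^*-u$, take $\Omega_\delta=\{u\le u^*-\delta\}$, and combine the decomposition principle with Barta's inequality to get $\lambda_1(-\Delta_f^{M\setminus\Omega_\delta})\ge c_0/\delta\to+\infty$. Your contradiction argument extracting $c_0,\delta_0>0$ with $\Delta_f u\ge c_0$ on $\{u>u^*-\delta_0\}$ is exactly the justification the paper leaves implicit when it asserts $c_j=\inf_{M\setminus\Omega_j}\Delta_f u>0$; the paper simply writes this down and notes the monotonicity of $c_j$.

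One small correction on exposition: the properness of $u$ is \emph{not} what makes the lower bound $\Delta_f u\ge c_0$ work on the collar --- that step uses only the woymp-violating hypothesis, which controls \emph{every} sequence with $u(x_k)\to u^*$, whether or not it escapes to infinity. Properness enters exactly where you also note it, namely to guarantee that each $\Omega_\delta$ is compact so that the decomposition principle applies.
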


\begin{proof}
Let $u:M\rightarrow(-\infty,u^{\ast})$, $u^{\ast}=\sup_{M}u<+\infty$, be a
woymp violating exhaustion function. Arguing exactly as above, we consider
$\Omega_{j}=\left\{  x\in M:u\left(  x\right)  \leq u^{\ast}-1/j\right\}
\nearrow M$ and $v\left(  x\right)  =u^{\ast}-u\left(  x\right)  >0$. Note
that, since $u$ is woymp violating,%
\[
\inf_{M\backslash\Omega_{j}}\Delta_{f}u=c_{j}>0,
\]
where, by the obvious monotonicity property of the infimum, the sequence
$c_{j}$ \ is increasing. Therefore,%
\[
\inf\sigma_{ess}\left( -\Delta_{f}^M\right)  \geq\lambda_{1}\left( - \Delta_{f}^{M
\backslash\Omega_{j}}\right)  \geq\inf_{M\backslash\Omega_{j}}\left(
-\frac{\Delta_{f}v}{v}\right)  \geq jc_{j}\rightarrow+\infty\text{, as
}j\rightarrow+\infty.
\]
\end{proof}

At this point, a natural question is to what extent the existence of a woymp
violating exhaustion function characterize the spectrum of the Laplacian of the
underlying manifold.
Beside proper bounded submanifolds with controlled mean curvature,
are there natural examples of manifolds supporting woymp violating exhaustion
functions? Which geometric conditions ensure that such functions exist?
Some examples will help us to focus some important aspects.

\begin{example}
\rm{
\label{ex_StochIncomplete}
Let $M_{g}^{m}\approx\mathbb{R}^{m}$ be a complete, noncompact model manifold
endowed with the metric%
\[
\left\langle ,\right\rangle =dr^{2}+g\left(  r\right)  ^{2}d\theta^{2},
\]
where the smooth function $g:[0,+\infty)\rightarrow\lbrack0,+\infty)$
satisfies%
\[
\left\{
\begin{array}
[c]{ll}%
g\left(  r\right)  >0, & r>0\\
g^{\left(  2k\right)  }\left(  0\right)  =0, & k\in\mathbb{N}\\
g^{\prime}\left(  0\right)  =1. &
\end{array}
\right.
\]
According to a well known characterization $M_{g}^{m}$ \ is stochastically incomplete
if and only if
\begin{equation}
\label{stochincomplete}
\int_{0}^{+\infty}\frac{\int_{0}^{t}g^{m-1}}{g^{m-1}\left(  t\right)
}=u^{\ast}<+\infty.
\end{equation}
Indeed, the function,
\[
u\left(  x\right)  =\int_{0}^{r\left(  x\right)  }\frac{\int_{0}^{t}g^{m-1}%
}{g^{m-1}\left(  t\right)  }dt:M_{g}^{m}\rightarrow\lbrack0,u^{\ast}),
\]
satisfies
\[
\Delta u=1,
\]
and it is bounded if and only \eqref{stochincomplete} holds, in
which case $u$ is in fact a woymp violating exhaustion function, and
$\sigma _{ess}\left( -\Delta^{ M_{g}^{m}}\right)  =\emptyset$.

Thus a model is stochastically incomplete if and only if it admits
a woymp violating exhaustion function. In particular, for a
stochastically incomplete manifold which is a model, the condition
that $\sigma _{ess}\left(  -\Delta^{M_{g}^{m}}\right)  =\emptyset$ is
equivalent with the existence of a woymp violating  exhaustion.
}
\end{example}

\begin{example}
\label{ex-discreteness-vs-stoch-compl}
\rm{
In general it is not true that the stochastic incompleteness of a manifold is
equivalent to the discreteness of the spectrum of its Laplacian. Indeed,
condition $\sigma_{ess}\left(  -\Delta^M\right)  =\emptyset$ is invariant under
bilipschiz diffeomorphisms whereas, according to a result by T. Lyons, \cite{Ly-JDG}, the
stochastic (in)completeness is not. More concretely, consider the Riemannian
product $N=M_{g}^{m}\times\mathbb{R}$ where $M_{g}^{m}$ is stochastically
incomplete model. Then, $N$ is stochastically incomplete but the essential
spectrum of the Laplacian $\Delta_{N}=\Delta_{M}+d^{2}/dt^{2}$ is nonempty.
Note that a natural voymp violating function on $N$ is given by $v\left(
x,t\right)  =u\left(  x\right)  $ where $u\left(  x\right)  $ is defined in
the previous example. Clearly, $v:N\rightarrow\lbrack0,u^{\ast})$ is not an
exhaustion function. This shows that the assumptions of Theorem
\ref{th_ess-spec} are necessary.

One may also investigate whether the condition that the bottom of
the spectrum of exterior balls grows at a specified rate, namely
\begin{equation}
\label{lambda_1 estimate}
\lambda_1 (-\Delta^{M\setminus \overline{B}_R})\geq f(R),
\end{equation}
where $f(R)$ is a monotone nondecreasing and diverges as $R\to +\infty$,
forces stochastic incompleteness. However, without additional
global assumptions even this implication fails.
To see this, we begin observing that, by standard arguments, if $(M,\langle \,,\rangle)$
has a pole $o$ and $|\Delta r|\geq c>0 $ on $M\setminus \overline \Omega$, then
$\lambda_1(-\Delta^{M\setminus \overline \Omega})\geq \displaystyle c^2/4$. Indeed, by continuity
$\Delta r$ has constant sign in every connected component of $M\setminus \overline
\Omega$.
For every  $v\in C^\infty_c (M\setminus \overline \Omega)$ with
support in one of such connected components we have
\[
\int v^2 \leq \frac 1c\int |\Delta r| v^2 =
\pm \frac 2c\int v\langle \nabla v,\nabla r\rangle
\leq \frac 2c \left(\int v^2\right)^{1/2}
\left(\int |\nabla v|^2\right)^{1/2},
\]
and
\begin{equation}
\label{lambda1-estimate-2}
\frac {c^2}4 \int v^2 \leq \int |\nabla v|^2.
\end{equation}
Note that, in particular, it follows from the decomposition principle, that if $|\Delta r|\to
+\infty$ as $x\to \infty$, then $\sigma_{ess}(-\Delta^M) = \emptyset$.

Now, given a function $f$ as above, assume that $g$ satisfies
\[
g(r) = \exp\bigl(-2\int_1^r \sqrt{f(t)}\,dt \bigr)
\]
for sufficiently large $r$. Then the model $M_g^m$  has the property that
\[
\Delta r\leq - 2 \sqrt{f(R)} \quad\text{in the complement of }\, B_R,
\]
and \eqref{lambda_1 estimate} follows. On the other hand, since $g(r)$ is decreasing, $\mathrm{vol}( B_R)$
grows at most linearly, and $M_g^m$ is necessarily stochastically complete.

We note in passing that this shows that the assumption that $M$ be a Cartan-Hadamard manifold plays a fundamental role in Problem 10 of Grigor'yan's
survey \cite{grigoryan-BAMS}.
}
\end{example}

\begin{example}
\rm{
In general a stochastically incomplete manifold with discrete spectrum can
support woymp violating functions both exhaustion and not. Indeed, take
$N=M\times M$ where $M$ supports a positive woymp violating exhaustion
function $u:M\rightarrow\lbrack0,u^{\ast})$ (for instance, $M$ is one of the
bounded minimal surfaces of $\mathbb{R}^{3}$ by Martin-Morales). Then
$\sigma_{ess}\left( -\Delta^ M\right)  =\emptyset$. Moreover, $v\left(  x,y\right)
=u\left(  x\right)  +u\left(  y\right)  $ is woymp violating and exhaustion
whereas $w\left(  x,y\right)  =u\left(  x\right)  $ is woymp violating but not exhaustion.

As a consequence, for a stochastically incomplete manifold $M$, condition
$\sigma_{ess}\left( -\Delta^ M\right)  =\emptyset$ could imply at most that there
exists one woymp violating exhaustion function.
}
\end{example}

In a similar view, since both $\sigma_{ess}$ and the Feller property are only sensitive to
the properties of the manifold off a compact set, it is also natural to investigate to what
extent they are related. However, we are going to see that, without further assumptions, there is
no link between the Feller property and the discreteness of
the spectrum.

\begin{example}
\label{ex_Feller_vs_spectrum}
\rm{
For the sake of simplicity, we restrict ourselves to
the case of the ordinary Laplacian $\Delta$, even though much of the
ensuing discussion could be generalized to the $f$-Laplacian.

%
As noted in the proof of Theorem~\ref{th_feller+compactsupport1},
the model manifold $M^m_g$ is Feller is and only if either
\begin{equation*}
\frac{1}{g^{m-1}\left(  r\right)  }\in L^{1}\left(  +\infty\right)
\end{equation*}
or%
\begin{equation*}
\text{(i) }\frac{1}{g^{m-1}\left(  r\right)  }\notin L^{1}\left(
+\infty\right)  \text{\qquad and\qquad(ii) }\frac{\int_{r}^{+\infty}%
g^{m-1}\left(  t\right)  dt}{g^{m-1}\left(  r\right)  }\notin L^{1}\left(
+\infty\right).
\end{equation*}
Recalling  that $\Delta r = (m-1)\frac {g'}{g}$, we deduce from the discussion in
Example~\ref{ex-discreteness-vs-stoch-compl} that,
if $g(r) =e^{-r^\alpha}$ for $r>>1$,  then $M_g^m$ has discrete spectrum for
every $\alpha >1$, and it is Feller for $\alpha\leq 2$ and
non-Feller for $\alpha>2$. Note that all these manifolds have finite
volume, and so are automatically stochastically complete, showing
that even in the case of models there is no equivalence between
discreteness of the spectrum, and stochastic incompleteness.

It follows that if $M$ is the connected sum of $\mathbb{R}^m$ with a
non-Feller model $M^m_g$, then $M$ is non-Feller. Since the essential spectrum of
$\mathbb{R}^m$ is the entire interval $[0,+\infty)$ an easy argument based
on characteristic sequences supported in the end of $M$  isometric to an exterior domain of
$\mathbb{R}^m$ shows that $\sigma_{ess}(-\Delta^M)=[0,+\infty),$ and $M$ is therefore a non-Feller manifold with
non-empty essential spectrum. Of course, $\mathbb{R}^m$ itself is a trivial example of a Feller manifold
with non-empty essential spectrum.
}
\end{example}


\begin{example}
\rm{
As seen in Example~\ref{ex_StochIncomplete}, a stochastically
incomplete model has discrete spectrum. Since such model has
necessarily infinite volume, by the characterization of the previous
example it is Feller. Small modifications to the arguments described
above show that even for non-Feller, stochastically incomplete
manifolds there is in general no connection with the discreteness of
the spectrum.

Recall that one may extend the definition of stochastic completeness
(incompleteness) to an end of a manifold,  by requiring that the double
of the end be stochastically complete (incomplete).
Then it follows easily from the weak maximum principle that
a manifold is stochastically incomplete if and only so is at least one of
its ends.

The connected sum $M=M_{g_1}^m\# M_{g_2}^m$ of a stochastically incomplete model
$M_{g_1}^m$ with a non-Feller model $M_{g_2}^m$ with discrete
spectrum as described in Example~\ref{ex_Feller_vs_spectrum},
provides an example of a stochastically incomplete, non-Feller
manifold with $\sigma _{ess}\left(- \Delta^ M\right)  =\emptyset$.

On the other hand, the connected sum  $M=M_{g_1}^m\# M_{g_2}^m\#\mathbb{R}^m$
is stochastically incomplete, non-Feller, and $\sigma _{ess}\left(- \Delta^M\right)
=[0,+\infty)$.

Note that all these examples have more than one end, and the case of
manifolds with only one end remains open.
}
\end{example}

\subsection{Spectrum and semilinear PDE's}

In this section we use very easy spectral considerations to deduce information
on\ nonnegative solutions of the differential
inequality%
\begin{equation}
\Delta_{f}u\leq au-bu^{\sigma},\label{ess&eq1}%
\end{equation}
in the exterior of a compact set for some constants $a\geq0,b>0$ and $\sigma>1$. By the strong
minimum principle, $u>0$ unless it is identically zero in each connected
component of every point where it vanishes. If $u$ satisfied the inequality on all of $M_{f}$,
and the weighted manifold was stochastically complete with respect to the
$f$-Laplacian, then a direct application of the weak maximum principle at infinity
 would imply that either $\inf_{M}u=0$ or%
\[
\inf_{M}u\leq\left(  \frac{a}{b}\right)  ^{\frac{1}{\sigma-1}}.
\]
For instance, when applied to the scalar curvature $S\left(  x\right)  $ of a
complete, shrinking Ricci soliton, this procedure gives the estimate%
\[
0\leq\inf_{M}S\leq m\lambda,
\]
where $\lambda$ denotes the soliton constant and $m=\dim M$. Indeed, it is
well known that $S$ satisfies%
\begin{equation}
\frac{1}{2}\Delta_{f}S=\lambda S-\left\vert Ric\right\vert ^{2}\leq\lambda
S-\frac{S^{2}}{m}.\label{soliton-scalareq}%
\end{equation}
In the spirit of the previous sections, we are going to extend these
considerations outside a compact set. This time, however, we use spectral
assumptions instead of stochastic properties.

\begin{proposition}\label{prop_estimate_spec1}
Let $u>0$ be a solution of (\ref{ess&eq1}) in a neighborhood at infinity, for
some constants $a\in \mathbb{R}$, $b>0$ and $\sigma>1$. Then, for every domain
$\Omega\subset\subset M$,%
\[
\inf_{M\backslash\Omega}
u\leq
\left(
\frac{a+ \inf\sigma_{ess}\left(  -\Delta_{f}^M\right)}{b}
\right)
 ^{\frac{1}{\sigma-1} }  .
\]
\end{proposition}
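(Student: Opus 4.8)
The plan is to feed the solution $u$ itself into the Barta-type lower bound for the bottom of the essential spectrum supplied by Corollary~\ref{cor_essspec}. Since $u>0$ is a $C^2$ solution of \eqref{ess&eq1} on $M\setminus\Omega$, that corollary applied with the test function $v=u$ gives at once
\[
\inf\sigma_{ess}\left(-\Delta_f^M\right)\;\geq\;\inf_{M\setminus\Omega}\left(-\frac{\Delta_f u}{u}\right),
\]
so the entire proof reduces to bounding the right-hand side from below in terms of $\inf_{M\setminus\Omega}u$.

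To this end I would simply rewrite the differential inequality: from $\Delta_f u\leq au-bu^{\sigma}$ and $u>0$, dividing through by $u$ yields
\[
-\frac{\Delta_f u}{u}\;\geq\;-a+b\,u^{\sigma-1}\qquad\text{on }M\setminus\Omega .
\]
Since $\sigma>1$, the map $t\mapsto t^{\sigma-1}$ is non-decreasing on $[0,+\infty)$, hence $u^{\sigma-1}\geq\bigl(\inf_{M\setminus\Omega}u\bigr)^{\sigma-1}$ pointwise, and therefore
\[
\inf_{M\setminus\Omega}\left(-\frac{\Delta_f u}{u}\right)\;\geq\;-a+b\,\Bigl(\inf_{M\setminus\Omega}u\Bigr)^{\sigma-1}.
\]
Combining this with the spectral inequality above gives $b\,\bigl(\inf_{M\setminus\Omega}u\bigr)^{\sigma-1}\leq a+\inf\sigma_{ess}(-\Delta_f^M)$; in particular the right-hand side is automatically non-negative (otherwise no such positive $u$ could exist), so dividing by $b>0$ and extracting the $(\sigma-1)$-th root produces exactly the asserted estimate.

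I do not anticipate a genuine obstacle here: once one thinks of using $u$ as its own test function, the rest is a one-line manipulation resting entirely on Corollary~\ref{cor_essspec}, which in turn comes from the decomposition principle and the Barta estimate. The only two points that deserve a word of care are the monotonicity step, which lets one replace $\inf_{M\setminus\Omega}u^{\sigma-1}$ by $\bigl(\inf_{M\setminus\Omega}u\bigr)^{\sigma-1}$, and the implicit remark that $a+\inf\sigma_{ess}(-\Delta_f^M)\geq 0$ whenever a positive solution exists, which is what makes the final root meaningful. If one wished to allow merely weak solutions, one would need a weak form of Corollary~\ref{cor_essspec}, but for $C^2$ solutions the argument above is complete.
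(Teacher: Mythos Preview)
Your proof is correct and is essentially identical to the paper's own argument: both apply Corollary~\ref{cor_essspec} with $u$ itself as the test function and then use the differential inequality to bound $-\Delta_f u/u$ from below by $-a+b\,u^{\sigma-1}$, arriving at the same chain of inequalities. Your write-up is in fact a little more careful than the paper's, which compresses the whole thing into a single displayed line; your explicit mention of the monotonicity of $t\mapsto t^{\sigma-1}$ and of the forced non-negativity of $a+\inf\sigma_{ess}(-\Delta_f^M)$ are helpful clarifications that the paper leaves implicit.
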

\begin{remark}
{\rm
In particular, if $\inf\sigma_{ess}\left(  -\Delta_{f}^M\right)  =0$, we recover
the above conclusion outside every large compact set. According to Brooks' estimates,
this happens under suitable volume growth assumptions. Moreover, if
$\inf_{M\setminus \Omega} u= (a/b)^{1/(\sigma -1)}$, then $\Delta_f u \leq
0$ on $M\setminus \Omega$ and therefore by the comparison principle $u >(b/a)^{1/(\sigma -1)}$
in the interior of $M\setminus \Omega$. We then conclude that
 $\inf_{M\setminus \Omega} u= (a/b)^{1/(\sigma -1)}$ for
every $\Omega' \supseteq \Omega$, so that the infimum is attained at infinity.
Conversely, if $\inf_{M\backslash \Omega}u>\left(  a/b\right)  ^{1(\sigma-1)}$,
we deduce a gap in the essential spectrum and, therefore, a volume growth
estimate. In particular, this is the case when $a<0$. From another
point of view, when $a<0$ the estimate may be interpreted as a
non-existence result.
}
\end{remark}
\begin{proof}
It is a trivial application of Corollary \ref{cor_essspec}. Indeed%
\[
\inf\sigma_{ess}\left(  -\Delta_{f}^M\right)  \geq\inf_{M\backslash\Omega
}\left(  -\frac{\Delta_{f}u}{u}\right)  \geq-a+b\inf_{M\backslash \Omega%
}u^{\sigma-1}.
\]
\end{proof}
\goodbreak
Going back to the scalar curvature $S(x)$ of a gradient shrinker \ $M_{f}$
with soliton constant $\lambda>0$,
recall that $S\geq0$, the equality holding
at some point if and only if $M_{f}=\mathbb{R}_{f}^{m}$ and $f\left(
x\right)  =A\left\vert x\right\vert ^{2}+\left\langle B,x\right\rangle +c$.
Assume $S>0$, for otherwise there is nothing to prove. Then, for every $R>0$, we obtain the estimates%
\begin{equation}
\label{scal_1}
\inf_{M\backslash B_{R}}\frac{\left\vert Ric\right\vert ^{2}}{S}-\lambda
\leq\inf\sigma_{ess}\left(  -\Delta_{f}^M\right)
\end{equation}
and
\begin{equation}
\label{scal_2}
\inf_{M\backslash B_{R}}S-m\lambda\leq m\inf\sigma_{ess}\left(  -\Delta_{f}^M\right)
.
\end{equation}
In the case of an expanding Ricci soliton, we have $m\lambda \leq \inf_M S \leq 0$. Assume that $\inf_{M\backslash B_{R_o}} S(x)\geq 0$ for some $R_o\geq 0$, so that we must have either $\inf_M S = \inf_{B_{R_o}} S<0$ or $\inf_M S =0$ and the inf is not attained. Exactly as before estimates \eqref{scal_1} and \eqref{scal_2} hold for every $R\geq R_o$ and from the latter we deduce in particular that   $\inf\sigma_{ess}\left(  -\Delta_{f}^M\right)>0$ provided $S>0$ outside a compact set.

In conclusion of this section, we remark that Proposition \ref{prop_estimate_spec1} follows essentially from a suitable application of Barta's
theorem. Further use of this result, but in a slightly different direction,
yields a different kind of information on the solutions at infinity of
(\ref{ess&eq1}).

\begin{proposition}
\label{prop 40}
Let $M_{f}$ be a complete, $m$-dimensional weighted manifold satisfying%
\[
Ric_{f}\geq-\mu,\quad\text{and } \quad|\nabla f|\leq \beta^{1/2}
\]
for some constants $\mu>0$, $\beta \geq  0$. Then, there exists a constant $c=c\left(  m,\mu,\,\beta\right)  >0$
such that the following holds. If $u$ is a solution of
(\ref{ess&eq1}) outside a compact set $K$, for some constants $a \geq 0$, $b>0$ and
$\sigma>1$, then, for every $B_R(x)\in M \setminus K$,%
\[
\left(  \frac{a}{b}+\frac{c}{b}\frac{1+R^{2}}{R^{2}}\right)  ^{\frac{1}{\sigma-1}}\geq\inf
_{B_{R}\left(  x\right)  }u.
\]
In particular, for any fixed $R>0$,%
\[
\limsup_{x\rightarrow\infty}\left\{  \inf_{B_{R}\left(  x\right)  }u\right\}
<+\infty.
\]

\end{proposition}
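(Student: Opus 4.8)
The plan is to squeeze $\inf_{B_R(x)}u$ between a \emph{lower} bound for $\lambda_1\bigl(-\Delta_f^{B_R(x)}\bigr)$ obtained by feeding $u$ itself into Barta's inequality, and an \emph{upper} bound for the same quantity obtained from Cheng's eigenvalue comparison, Theorem~\ref{th_cheng-alberto}. The curvature hypotheses $Ric_f\ge-\mu$ and $|\nabla f|\le\beta^{1/2}$ enter only through the latter, and since they hold on all of $M$ they will yield a bound on $\lambda_1\bigl(-\Delta_f^{B_R(x)}\bigr)$ that is uniform in the center $x$ — which is exactly what makes the $\limsup$ assertion follow for free.

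First I would dispose of the trivial case: by the strong minimum principle $u$ is, in each connected component of its domain, either identically zero or strictly positive, so if $\inf_{B_R(x)}u=0$ the claimed inequality is immediate, and I may assume $u>0$ on $B_R(x)$. Since $B_R(x)\subset M\setminus K$, on this ball $u$ is a positive $C^2$ solution of $\Delta_f u\le au-bu^\sigma$, whence, dividing by $u>0$ and using that $t\mapsto t^{\sigma-1}$ is nondecreasing on $[0,+\infty)$ (here $\sigma>1$),
\[
-\frac{\Delta_f u}{u}\ \ge\ -a+b\,u^{\sigma-1}\ \ge\ -a+b\Bigl(\inf_{B_R(x)}u\Bigr)^{\sigma-1}\qquad\text{on }B_R(x).
\]
Applying the corollary to the weighted Barta theorem (stated above) with $\Omega=B_R(x)$ and the positive function $u$ gives
\[
\lambda_1\bigl(-\Delta_f^{B_R(x)}\bigr)\ \ge\ \inf_{B_R(x)}\Bigl(-\frac{\Delta_f u}{u}\Bigr)\ \ge\ -a+b\Bigl(\inf_{B_R(x)}u\Bigr)^{\sigma-1},
\]
and since $a\ge0$ and $\lambda_1\ge0$, this rearranges to
\[
\inf_{B_R(x)}u\ \le\ \left(\frac{a+\lambda_1\bigl(-\Delta_f^{B_R(x)}\bigr)}{b}\right)^{\frac{1}{\sigma-1}}.
\]

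Next I would bound $\lambda_1\bigl(-\Delta_f^{B_R(x)}\bigr)$ from above. Because $Ric_f\ge-\mu$ and $|\nabla f|\le\beta^{1/2}$ on all of $M$, Theorem~\ref{th_cheng-alberto}, applied with the ball centered at $x$, gives
\[
\lambda_1\bigl(-\Delta_f^{B_R(x)}\bigr)\ \le\ \lambda_1\bigl(-\Delta^{\mathbb{B}_R^{m+1}}\bigr),
\]
where $\mathbb{B}_R^{m+1}$ is the geodesic ball of radius $R$ in the $(m+1)$-dimensional space form whose constant curvature is fixed by $m,\mu,\beta$, so the right-hand side does not depend on $x$. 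A standard and explicit one-dimensional (Sturm--Liouville) estimate for the Rayleigh quotient of a radial test function in this model — equivalently, the elementary fact that $\lambda_1$ of geodesic balls of radius $R$ in a fixed space form blows up like $O(1/R^2)$ as $R\to0^+$ and stays bounded as $R\to+\infty$ — then produces a constant $c=c(m,\mu,\beta)>0$ with $\lambda_1\bigl(-\Delta^{\mathbb{B}_R^{m+1}}\bigr)\le c\,(1+R^2)/R^2$. Combining the displays yields
\[
\inf_{B_R(x)}u\ \le\ \left(\frac{a}{b}+\frac{c}{b}\,\frac{1+R^2}{R^2}\right)^{\frac{1}{\sigma-1}},
\]
and since, for $R$ fixed, the right-hand side is a constant independent of $x$ while $B_R(x)\subset M\setminus K$ as soon as $d(x,K)>R$, the assertion $\limsup_{x\to\infty}\bigl\{\inf_{B_R(x)}u\bigr\}<+\infty$ follows at once.

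The only genuinely delicate point is the last estimate for $\lambda_1\bigl(-\Delta^{\mathbb{B}_R^{m+1}}\bigr)$: one must package the small-$R$ blow-up rate $1/R^2$ and the large-$R$ boundedness into the single quantity $c\,(1+R^2)/R^2$, with $c$ depending only on $m,\mu,\beta$. This reduces to choosing a radial comparison function adapted to the warped-product structure of the space form (a fixed cutoff in the variable $s/R$ multiplied by the natural weight that computes the bottom of the spectrum of the model) and controlling the cross terms in its Rayleigh quotient by Cauchy--Schwarz; everything else is a direct citation of Theorem~\ref{th_cheng-alberto} and of the weighted Barta inequality.
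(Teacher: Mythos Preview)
Your proof is correct and follows essentially the same route as the paper's: Barta's inequality applied to $u$ on $B_R(x)$ gives the lower bound $\lambda_1(-\Delta_f^{B_R(x)})\ge -a+b(\inf_{B_R(x)}u)^{\sigma-1}$, while Theorem~\ref{th_cheng-alberto} gives the matching upper bound $\lambda_1(-\Delta_f^{B_R(x)})\le c(m,\mu,\beta)(1+R^{-2})$, and combining the two yields the stated estimate. The paper's argument is identical in structure and simply quotes the model eigenvalue bound without the additional justification you supply in your last paragraph.
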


\begin{proof}
Obviously, the only interesting case is%
\[
\inf_{B_{R}\left(  x\right)  }u>0.
\]
Then,%
\[
\lambda_{1}\left(  -\Delta_{f}^{B_{R}\left(  x\right) } \right)  \geq\inf
_{B_{R}\left(  x\right)  }\left(  -\frac{\Delta_{f}u}{u}\right)  \geq
-a+b\inf_{B_{R}\left(  x\right)  }u^{\sigma-1}
\]
On the other hand, by Theorem \ref{th_cheng-alberto},%
\[
\lambda_{1}\left(  -\Delta_{f}^{B_{R}\left(  x\right) } \right)  \leq\lambda
_{1}\left(  -\Delta^{\mathbb{B}_{R}^{m+1}}\right)  \leq c_{1}\left(
\mu,\beta, m\right)  \left(  1+\frac{1}{R^{2}}\right)
\]
with $\mathbb{B}_{R}^{m+1}\subset\mathbb{H}^{m+1}\left(  (\beta+\mu)/m\right)  .$
Combining these two inequalities completes the proof.
\end{proof}

The above result does not apply as stated to  expanding Ricci solitons, since in this case $|\nabla f|$ cannot be bounded unless the soliton is trivial (see \cite{PRimS-MathZ}). However, by Zhang's estimates, $|\nabla f|$ grows at most linearly in the distance from a reference point (and indeed, it was very recently shown by O. Munteanu and J.P. Wang that its growth is in fact essentially linear).

Assuming that $|\nabla f|\leq C_o r(x)$ in the above argument, it follows that for every fixed $R$ and every $x$ such that  $r(x)\geq 2 R$
\[
\lambda_{1}\left(  -\Delta_{f}^{B_{R}\left(  x\right) }\right)
\leq c_{1}\left(\mu +C^2(R+ r(x))^2\right) (1+ \frac 1 {R^2})
\]
for some constant $C$ depending only on $m$, and we conclude that if $u$ is as in the statement of Proposition~\ref{prop 40} then there exists a constant $c_2$ depending on $m$, $\mu,$ $\sigma$, $C_o$ and $R$  such that
\[
\frac {\inf_{B_R}u}{r(x)^2} \leq  c_2.
\]
This in particular holds for expanding Ricci solitons and compares with Zhang's estimate
\[
\frac{\left\vert S\left(  x\right)  \right\vert }{r\left(  x\right)  ^{2}}\leq
d\left(  m,\lambda\right).
\]
Note that the latter follows applying the estimates of the potential function to
the basic equation (\ref{soliton-basic-eq}), and is therefore specific of the much more rigid geometry imposed by the soliton structure.

\bigskip

\end{document}